\documentclass[11pt]{article}
\usepackage[margin=1in]{geometry} 
\geometry{letterpaper}

\usepackage{amssymb,amsfonts,amsmath,bbm,mathrsfs,stmaryrd}
\usepackage{xcolor}
\usepackage{url}

\usepackage{enumerate}

\usepackage[colorlinks,
             linkcolor=black!75!red,
             citecolor=blue,
             pdftitle={},
             pdfauthor={},
             pdfproducer={pdfLaTeX},
             pdfpagemode=None,
             bookmarksopen=true
             bookmarksnumbered=true]{hyperref}

\usepackage{tikz}
\usetikzlibrary{arrows,calc,decorations.pathreplacing,decorations.markings,intersections,shapes.geometric,through,fit,shapes.symbols,positioning,decorations.pathmorphing}

\usepackage{braket}

\usepackage[amsmath,thmmarks,hyperref]{ntheorem}
\usepackage{cleveref}

\creflabelformat{enumi}{#2(#1)#3}

\crefname{section}{Section}{Sections}
\crefformat{section}{#2Section~#1#3} 
\Crefformat{section}{#2Section~#1#3} 

\crefname{subsection}{\S}{\S\S}
\AtBeginDocument{%
  \crefformat{subsection}{#2\S#1#3}%
  \Crefformat{subsection}{#2\S#1#3}%
}


\theoremstyle{plain}

\newtheorem{lemma}{Lemma}[section]
\newtheorem{proposition}[lemma]{Proposition}
\newtheorem{corollary}[lemma]{Corollary}
\newtheorem{theorem}[lemma]{Theorem}

\theoremstyle{nonumberplain}

\theoremstyle{plain}
\theorembodyfont{\upshape}
\theoremsymbol{\ensuremath{\blacklozenge}}

\newtheorem{definition}[lemma]{Definition}

\newtheorem{remark}[lemma]{Remark}

\crefname{definition}{definition}{definitions}
\crefformat{definition}{#2definition~#1#3} 
\Crefformat{definition}{#2Definition~#1#3} 

\crefname{ex}{example}{examples}
\crefformat{example}{#2example~#1#3} 
\Crefformat{example}{#2Example~#1#3} 

\crefname{remark}{remark}{remarks}
\crefformat{remark}{#2remark~#1#3} 
\Crefformat{remark}{#2Remark~#1#3} 

\crefname{convention}{convention}{conventions}
\crefformat{convention}{#2convention~#1#3} 
\Crefformat{convention}{#2Convention~#1#3}

\crefname{lemma}{lemma}{lemmas}
\crefformat{lemma}{#2lemma~#1#3} 
\Crefformat{lemma}{#2Lemma~#1#3} 

\crefname{proposition}{proposition}{propositions}
\crefformat{proposition}{#2proposition~#1#3} 
\Crefformat{proposition}{#2Proposition~#1#3} 

\crefname{corollary}{corollary}{corollaries}
\crefformat{corollary}{#2corollary~#1#3} 
\Crefformat{corollary}{#2Corollary~#1#3} 

\crefname{theorem}{theorem}{theorems}
\crefformat{theorem}{#2theorem~#1#3} 
\Crefformat{theorem}{#2Theorem~#1#3} 

\crefname{enumi}{}{}
\crefformat{enumi}{(#2#1#3)}
\Crefformat{enumi}{(#2#1#3)}

\crefname{assumption}{assumption}{Assumptions}
\crefformat{assumption}{#2assumption~#1#3} 
\Crefformat{assumption}{#2Assumption~#1#3} 

\crefname{equation}{}{}
\crefformat{equation}{(#2#1#3)} 
\Crefformat{equation}{(#2#1#3)}


\numberwithin{equation}{section}
\renewcommand{\theequation}{\thesection-\arabic{equation}}

\theoremstyle{nonumberplain}
\theoremsymbol{\ensuremath{\blacksquare}}

\newtheorem{proof}{Proof}
\newcommand\pf[1]{\newtheorem{#1}{Proof of \Cref{#1}}}

\newcommand\bC{{\mathbb C}}

\newcommand\bR{{\mathbb R}}
\newcommand\bS{{\mathbb S}}

\newcommand\cA{{\mathcal A}}
\newcommand\cB{{\mathcal B}}
\newcommand\cC{{\mathcal C}}

\newcommand\cH{{\mathcal H}}
\newcommand\cI{{\mathcal I}}

\newcommand\cK{{\mathcal K}}
\newcommand\cL{{\mathcal L}}

\newcommand\cQ{{\mathcal Q}}

\newcommand{\la}{\leftarrow}


\newcommand{\ot}{\otimes}

\newcommand{\raro}{\rightarrow}

\newcommand{\clb}{{\cal B}}
\newcommand{\clc}{{\cal C}}

\newcommand{\clq}{{\cal Q}}

\DeclareMathOperator{\id}{id}


\newcommand\numberthis{\addtocounter{equation}{1}\tag{\theequation}}


\newcommand{\cat}[1]{\textsc{#1}}

\newcommand{\qedhere}{\mbox{}\hfill\ensuremath{\blacksquare}}


\title{Existence and rigidity of quantum isometry groups for compact metric spaces}
\author{Alexandru Chirvasitu\footnote{Partially supported by NSF grant DMS-1801011} and Debashish Goswami\footnote{Partially supported by J.C. Bose Fellowship from D.S.T. (Govt. of India).}}


\begin{document}

\date{}

\newcommand{\Addresses}{{
  \bigskip
  \footnotesize

  \textsc{Department of Mathematics, University at Buffalo, Buffalo,
    NY 14260-2900, USA}\par\nopagebreak \textit{E-mail address}:
  \texttt{achirvas@buffalo.edu}

  \medskip
  
  \textsc{Indian Statistical Institute,
    203, B. T. Road, Kolkata 700108, India}\par\nopagebreak \textit{E-mail address}:
  \texttt{goswamid@isical.ac.in}

}}

\maketitle

\begin{abstract}
  We prove the existence of a quantum isometry groups for new classes of metric spaces: (i) geodesic metrics for compact connected Riemannian manifolds (possibly with boundary) and (ii) metric spaces admitting a uniformly distributed probability measure. In the former case it also follows from recent results of the second author that the quantum isometry group is classical, i.e. the commutative $C^*$-algebra of continuous functions on the Riemannian isometry group.
\end{abstract}

\noindent {\em Key words: compact quantum group, quantum isometry group, Riemannian manifold, geodesic, smooth action}

\vspace{.5cm}

\noindent{MSC 2010: 81R50, 81R60, 20G42, 58B34}


\section*{Introduction}

Having originated in the mathematical physics literature \cite{drinfeld,jimbo,frt,soi}, quantum groups now constitute a rich and actively-developed field. While the original impetus was mainly algebraic in nature, further developments have given the topic a functional-analytic flavor through the work of Woronowicz \cite{Pseudogroup}, Podles \cite{Podles}, Kustermans-Vaes \cite{kustermans} and many more (too numerous to do justice here).

Actions of quantum groups are typically cast as coactions of certain Hopf algebras on algebraic or geometric structures, in the style of Manin's study \cite{manin_book} of quantum symmetries for quadratic graded algebras. In the framework introduced in \cite{Pseudogroup} the types of structures whose quantum symmetries one is led to consider abound: finite (quantum) graphs, finite non-commutative measure spaces (i.e. finite-dimensional $C^*$-algebras equipped with distinguished states, finite metric spaces, etc.). We refer the reader to \cite{ban_1,bichon,wang,ban_col} for some (of the numerous) examples.

In the same spirit, the second author introduced in \cite{Goswami} the concept of quantum automorphism group of a spectral triple, the latter being an incarnation of a Riemannian or spin manifold in Connes' framework for non-commutative geometry \cite{con}. The topic has provided a rich supply of problems and examples, as reflected by further work on it \cite{skalski_bhow,Soltan}.

In the present paper we are concerned with quantum symmetries of {\it classical} structures, specifically compact metric spaces. One phenomenon that has emerged from recent work in the field is that certain ``sufficiently regular'' classical structures are quantum-rigid, in the sense that a compact quantum group acting faithfully in a structure-preserving manner is automatically classical, i.e. a plain compact group. The recent \cite{final} confirms a conjecture to that effect by the second author:

\begin{theorem}[3.10 of \cite{final}]
  A compact quantum group acting faithfully and smoothly on a closed connected smooth manifold is classical.
\end{theorem}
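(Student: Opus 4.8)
The plan is to reduce the abstract smoothness hypothesis to an \emph{isometric} action and then exploit the rigidity of the exponential map. First I would average an arbitrary Riemannian metric over the Haar state of the compact quantum group $\mathbb{G}=(A,\Delta)$: given any smooth metric $g_0$ on the closed manifold $M$, the coaction $\alpha\colon C^\infty(M)\to C^\infty(M)\ot A$ induces an action on $1$-forms (via smoothness and compatibility with $d$) and hence on symmetric $2$-tensors, and integrating the transported metric against the Haar state $h$ produces a genuine Riemannian metric $g=(\id\ot h)(\alpha\cdot g_0)$ that is preserved by $\alpha$. Thus we may assume from the outset that $\mathbb{G}$ acts \emph{smoothly and isometrically} on $(M,g)$. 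In particular $\alpha$ commutes with the Laplace--Beltrami operator $\mathcal{L}_g$, each eigenspace $V_\lambda\sbs C^\infty(M)$ is finite-dimensional and $\alpha$-invariant, and $\bigoplus_\lambda V_\lambda$ is dense; this yields a family of finite-dimensional unitary corepresentations whose matrix coefficients generate $A$ by faithfulness, giving a concrete algebraic handle on the action.

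The heart of the argument is a rigidity statement for the exponential map. An isometric smooth action preserves geodesics, so it lifts to a smooth action $\wt\alpha$ on the orthonormal frame bundle $\pi\colon\mathcal{F}\to M$, and $\wt\alpha$ commutes with the free right $O(n)$-action making $\mathcal{F}$ a principal bundle. Fix a point $p_0\in M$ and a frame $f_0\in\pi^{-1}(p_0)$. In geodesic normal coordinates the value of the action on a normal neighborhood of $p_0$ is completely determined by two pieces of data: the image of the basepoint $p_0$, an $M$-valued quantum point, and the induced rotation of $T_{p_0}M$ relative to $f_0$, an $O(n)$-valued quantum element. Because $\exp$ intertwines these with the coaction, all matrix coefficients of $\alpha$ over this neighborhood are $C^*$-polynomials in the coefficients coming from $M$ and from $O(n)$.

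The decisive point is that $M$ and $O(n)$ are both \emph{classical} compact spaces, so the two families of coefficients each generate a commutative $C^*$-subalgebra of $A$, and the frame-rotation coefficients commute with the basepoint coefficients because the lifted action respects the product structure along geodesics. Covering $M$ by normal neighborhoods and using connectedness, the totality of these coefficients generates a commutative $C^*$-subalgebra $B\seq A$; by faithfulness of the action $B=A$, whence $A$ is commutative and $\mathbb{G}$ is classical. I expect the main obstacle to be making rigorous the ``differentiation'' of a quantum-group action: since $\mathbb{G}$ is not a group, the lift $\wt\alpha$ to $\mathcal{F}$ and the induced orthogonal transformation of $T_{p_0}M$ must be constructed at the level of the coaction on smooth functions (e.g.\ through the action on $1$-forms and the metric pairing $\lgl df,dg\rgl$), and one must verify that the exponential map genuinely intertwines $\alpha$ with these derived data rather than merely pointwise. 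The finite-dimensional eigenspace decomposition from the first step is precisely what keeps this within the algebraic category where such intertwining identities can be checked.
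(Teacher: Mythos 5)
Your overall skeleton (average a metric to reduce to the isometric case, lift to the orthonormal frame bundle, use the rigidity of the exponential map plus connectedness) matches the strategy of \cite{gafa,final} that the paper relies on, but the step you call ``decisive'' contains the genuine gap, and it is exactly the hard part of the theorem. You assert that because $M$ and $O(n)$ are classical compact spaces, ``the two families of coefficients each generate a commutative $C^*$-subalgebra of $A$.'' This does not follow: a compact quantum group acting on a \emph{classical} compact space can perfectly well have noncommutative slice algebras $\cQ_x=\mathrm{Im}(\mathrm{ev}_x\otimes\id)\circ\alpha$ --- the quantum permutation group $S_n^+$ acting on $n$ points is the standard example, and it is precisely what the introduction of this paper exhibits as a genuinely quantum isometry group. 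Commutativity of the subalgebra generated by the values and derivatives of $\alpha(f)$ at a point (the algebras $\cQ'_m$ of \Cref{eq:qm'}, \Cref{le:qm'}) is a theorem, not a formal consequence of the target space being classical; in the paper it is extracted from the relation $d\alpha(f)\alpha(g)=\alpha(g)\,d\alpha(f)$ of \Cref{eq:dfg}, which in turn requires the implementing unitary for the (reduced, injective) action and the metric-preservation argument of \Cref{th.prsrv}. The same objection applies to your claim that the frame-rotation coefficients commute with the basepoint coefficients ``because the lifted action respects the product structure along geodesics'' --- that is again the content to be proved, not a consequence of the geometry.

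There is also a secondary issue with the averaging step: $(\id\otimes h)(\alpha\cdot g_0)$ need not be positive \emph{definite} unless one knows the action is injective (equivalently, implemented by a unitary after reduction); this is why \Cref{th.prsrv} carries the hypothesis that $\alpha_r$ is injective, and why the paper must separately verify injectivity of the lifted action on the sphere bundle by exhibiting an invariant faithful measure (\Cref{higher_comm}). Finally, even granting first-order commutativity, the exponential-map argument requires commutativity of \emph{all higher-order} partial derivatives of $\alpha(f)$ at a point before Taylor expansion in normal coordinates can show that the local coefficients lie in a single commutative algebra; this is the iterative lifting argument of \Cref{higher_comm} and \cite[\S 4--5]{gafa}, which your proposal skips entirely. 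In short: the reduction to isometric actions and the frame-bundle/exponential-map framing are sound and consistent with the paper, but the proof as written assumes its conclusion at the crucial moment.
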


In this paper we prove a stronger version of the above theorem by weakening the smoothness condition to what we have termed `weak smoothness'. This keeps with the spirit of similar rigidity results in slightly varying settings:

\begin{enumerate}[(1)]
\item An analogue under the additional assumption that the action preserves the Laplacian of a Riemannian metric \cite{gafa}. 
\item A semisimple and cosemisimple Hopf algebra (hence also finite-dimensional) coacting faithfully on a commutative domain must be commutative \cite{Etingof_walton_1}.
\item An isometric faithful action of a compact quantum group on the geodesic metric space of a negatively-curved connected closed Riemannian manifold is classical \cite{chirvasitu}.
\end{enumerate}

This last result is placed in the context of isometric actions as introduced in \cite{metric} and will be generalized in some of our main results below (\Cref{th.nbdry,th.rig-bdry}):

\begin{theorem}
  A compact quantum group acting isometrically on the geodesic metric space of a compact connected Riemannian manifold is classical.
\end{theorem}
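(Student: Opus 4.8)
The plan is to reduce the metric statement to a smooth one and then invoke rigidity. Write the action as a coaction $\alpha\colon C(M)\to C(M)\otimes C(G)$ of a compact quantum group $G$ on the geodesic metric space $(M,d)$ of the Riemannian manifold $M$, and treat the closed case first. Here it suffices to prove that $\alpha$ is \emph{weakly smooth}, for then our weak-smoothness strengthening of \cite[3.10]{final} forces $C(G)$ to be commutative, i.e. the action is classical. Thus the crux is a noncommutative Myers--Steenrod phenomenon: an action preserving only the geodesic distance must already respect the smooth structure.

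The first step is to pass from the metric to the Riemannian data. For a geodesic distance a function is $1$-Lipschitz exactly when its differential has pointwise Riemannian norm at most $1$, so the Lip-seminorm encoding $d$ is $f\mapsto\|\nabla f\|_\infty$; more strongly, the metric tensor $g$ is recovered from the second-order Taylor expansion of $\tfrac12\,d(\cdot,\cdot)^2$ along the diagonal. Since $\alpha$ is isometric it preserves the Lip-seminorm, and I would leverage the diagonal expansion to upgrade this to an intertwining of the Laplace--Beltrami operator, $\alpha\,\Delta_g=(\Delta_g\otimes\id)\,\alpha$ on a suitable core. This converts ``distance-preserving'' into ``Riemannian-structure-preserving,'' placing us near the hypothesis of \cite{gafa}.

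With the Laplacian intertwined, the second step bootstraps to smoothness: the heat semigroup $e^{t\Delta_g}$ is smoothing and now commutes with $\alpha$, and $C^\infty(M)=\bigcap_{n}\operatorname{dom}(\Delta_g^{\,n})$, so $\alpha$ carries each spectral subspace of the coaction into smooth functions---precisely weak smoothness. This closes the closed case. For $M$ with boundary I would first note that $\partial M$ is metrically distinguished, e.g. as the locus at which geodesics cannot be prolonged, hence $\alpha$-invariant; one then reduces to the boundaryless situation by passing to the metric double of $M$ along $\partial M$ and propagating weak smoothness across the seam.

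I expect the noncommutative Myers--Steenrod step---deriving the intertwining of $\Delta_g$ from mere preservation of $d$---to be the main obstacle. Classically one uses that isometries send geodesics to geodesics and commute with the exponential map, both arguments about individual points and curves that are unavailable for a genuine coaction; reproducing them algebraically requires encoding ``$\tfrac12 d^2$ has Hessian $g$ on the diagonal'' at the level of $\alpha$ and controlling the regularity of the distance function near the cut locus. The final appeal to rigidity, by contrast, should be comparatively formal once weak smoothness is in hand.
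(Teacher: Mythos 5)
Your overall architecture agrees with the paper's: establish weak smoothness of the isometric action, invoke the weak-smoothness rigidity theorem (\Cref{main}), and handle $\partial M\ne\emptyset$ by showing the boundary is invariant and doubling. But the central step you propose for weak smoothness --- deriving the intertwining $\alpha\,\Delta_g=(\Delta_g\otimes\id)\,\alpha$ from preservation of $d$ and then bootstrapping via the heat semigroup --- has a genuine gap, and you have in effect flagged it yourself without closing it. The difficulty is not merely that the classical Myers--Steenrod argument is pointwise; it is that the intertwining relation cannot even be \emph{formulated} on a useful core before you know that $\alpha$ preserves some regularity: to apply $\Delta_g\otimes\id$ to $\alpha(f)$ you need $\alpha(f)$ to be (at least) twice differentiable in the $M$-variable, which is essentially the weak smoothness you are trying to prove. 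Commuting with the Laplacian is precisely the \emph{hypothesis} of \cite{gafa} that this paper is written to avoid, so taking it as an intermediate target re-imports the hard assumption rather than deriving the conclusion. (A second-order Taylor expansion of $\tfrac12 d^2$ along the diagonal recovers $g$ classically, but turning that into an operator identity for a genuine coaction again presupposes differentiability of $\alpha(f)$.)

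The paper's route is more elementary and sidesteps the Laplacian entirely. One works with the cut-off squared distance $D=\psi\circ d^2$ and reads the isometry condition as $\alpha(D_x)(y)=\kappa(\alpha(D_y)(x))$; for fixed $x$ the right-hand side is the composition of the \emph{smooth} Banach-space-valued map $y\mapsto D_y\in C(M)$ with the bounded $*$-homomorphism $f\mapsto\kappa(\alpha(f)(x))$, hence $\alpha(D_x)$ is smooth with no regularity input on $\alpha$ whatsoever. A Nachbin-type density theorem (\Cref{th:nachb}, via \Cref{le.d_x_density}) then produces finitely many polynomials $\xi_i$ in the $D_x$ embedding $M$ into $\bR^k$, and writing any smooth $f$ as $\tilde f(\xi_1,\dots,\xi_k)$ gives $\alpha(f)=\tilde f(\alpha(\xi_1),\dots,\alpha(\xi_k))\in C^\infty(M,\cQ)$. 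You would do well to replace your Laplacian step with an argument of this kind. Similarly, in the boundary case your metric characterization of $\partial M$ (``geodesics cannot be prolonged'') is a statement about individual points and curves of exactly the sort you correctly note is unavailable for coactions; the paper's \Cref{pr.bdry-inv} instead runs a measure-support argument with antipodal points on small geodesic spheres, and the gluing across the double requires the \emph{strong} invariance of $\partial M$ (\Cref{le:bdrystrong}) plus a separated-variables density argument near the seam (\Cref{th.doubled}) --- none of which is automatic from ``propagating weak smoothness across the seam.''
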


On a somewhat different note, a phenomenon that has received some attention in the literature is the problem of whether or not a given piece of structure even {\it has} a quantum automorphism group: a ``largest'' or universal quantum group acting in a structure-preserving manner.

The issue was first illustrated in \cite[Theorem 6.1]{wang}: although a finite classical space $X$ admits a quantum automorphism group that automatically preserves the uniform measure on $X$, in general a finite-dimensional $C^*$-algebra $A$ does not admit such a universal action. The problem is that every compact quantum group acting on $A$ will automatically preserve a state on $A$, but there is no ``canonical'' state preserved by all such actions.

For essentially the same reason, it is unclear whether, for a given compact metric space $(X,d)$, there is a universal compact quantum group acting isometrically on $X$ in the sense of \cite[Definition 3.1]{metric}. Contrast this with classical group actions: the isometry group of a compact metric space is compact, and hence is universal among classical compact groups acting isometrically.

As in the case of finite-dimensional algebras touched on above, it is not difficult to show that having fixed a probability measure $\mu$ on $X$, there {\it is} a universal compact quantum group $QAUT(X,d,\mu)$ among those that act on $X$ so as to preserve both $d$ and $\mu$. As before, it is unclear in general how to select a ``best'' measure $\mu$ preserved by every quantum action in order to construct a universal quantum isometry group $QAUT(X,d)$. The choice, however, is obvious when the metric space $(X,d)$ admits a {\it uniformly distributed measure} (see \Cref{def.ud}): one which assigns equal mass to balls of equal radii.

It is well known that uniformly distributed probability measures are unique when they exist. In that case we have (see \Cref{th.ud}):

\begin{theorem}
  Let $(X,d)$ be a compact metric space admitting a uniformly distributed probability measure $\mu$. Then, every compact quantum group acting isometrically on $(X,d)$ leaves $\mu$ invariant.
\end{theorem}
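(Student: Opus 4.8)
The plan is to produce, by Haar--averaging, a canonical $\alpha$--invariant state, and to pin it down using a positivity computation powered by the isometry of $\alpha$, with the quoted uniqueness of uniformly distributed measures as the organizing wrapper. Write $\mathbb{G}=(A,\Delta)$ with Haar state $h$, let $\alpha\colon C(X)\to C(X)\otimes A$ be the isometric action, and set $\nu:=(\mu\otimes h)\circ\alpha$. A standard computation using coassociativity $(\alpha\otimes\id)\alpha=(\id\otimes\Delta)\alpha$ and the invariance $(h\otimes\id)\Delta=h(\cdot)1$ of the Haar state shows that $\nu$ is $\alpha$--invariant, i.e. $(\nu\otimes\id)\alpha=\nu(\cdot)1$. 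Hence, as soon as we prove $\nu=\mu$, the invariance of $\nu$ reads $(\mu\otimes\id)\alpha=\mu(\cdot)1$, which is exactly the assertion that $\alpha$ leaves $\mu$ invariant; by uniqueness of uniformly distributed measures it would suffice to show $\nu$ uniformly distributed. In fact the argument below proves the sharper pointwise statement $E:=(\mu\otimes\id)\alpha=\mu(\cdot)1$ directly on a large subspace, which implies both.

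The link to the metric is made through functional calculus. Identifying $C(X\times X)$ with $C(X)\otimes C(X)$ and letting $\alpha_2(f\otimes g)=\alpha(f)_{13}\alpha(g)_{23}$ be the induced diagonal action, the defining property of an isometric action (Definition 3.1 of \cite{metric}) can be recast as $\alpha_2(d)=d\otimes 1$, where $d\in C(X\times X)$ is the distance function. Since $\alpha_2$ is a unital $*$--homomorphism and $d$ is self-adjoint, continuous functional calculus gives $\alpha_2(\phi(d))=\phi(d)\otimes 1$ for every $\phi\in C([0,\mathrm{diam}\,X])$. Slicing by point evaluations and writing $U_x:=(\mathrm{ev}_x\otimes\id)\alpha$ (a unital $*$--homomorphism $C(X)\to A$), one obtains the distance relation $\sum_k U_x(f_k)U_y(g_k)=\phi(d(x,y))1$ for any decomposition $\phi(d)=\sum_k f_k\otimes g_k$. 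I also record the encodings of uniform distribution of $\mu$: $(\id\otimes\mu)\phi(d)=(\mu\otimes\id)\phi(d)=c_\phi 1$, with $c_\phi:=(\mu\otimes\mu)\phi(d)$.

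The heart of the argument is a positivity computation. Choose $\phi$ so that the kernel operator $T_\phi\psi(x)=\int\phi(d(x,y))\psi(y)\,d\mu(y)$ on $L^2(X,\mu)$ is positive; Mercer's theorem then gives a uniformly convergent expansion $\phi(d)=\sum_k v_k\otimes v_k$ with real continuous $v_k$, so each $E(v_k)=\int U_y(v_k)\,d\mu(y)$ is self-adjoint. Integrating the distance relation against $\mu\otimes\mu$ yields $\sum_k E(v_k)^2=c_\phi 1$; applying the unital linear map $E$ to the uniform-distribution identity $\sum_k\mu(v_k)v_k=c_\phi 1$ yields $\sum_k\mu(v_k)E(v_k)=c_\phi 1$; and $\sum_k\mu(v_k)^2=c_\phi$. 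Expanding the square,
\[
\sum_k\bigl(E(v_k)-\mu(v_k)1\bigr)^2=c_\phi 1-2c_\phi 1+c_\phi 1=0 .
\]
A vanishing sum of squares of self-adjoint elements forces each summand to vanish, so $E(v_k)=\mu(v_k)1$ for all $k$. Thus $E$ agrees with $\mu(\cdot)1$ on every spectral subspace of a positive-definite radial kernel, and in particular the distance distributions of $\nu=h\circ E$ and of $\mu$ coincide when tested against such $\phi$.

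The main obstacle is the passage from these test functions to all of $C(X)$. One must know that the eigenfunctions of positive-definite kernels $\phi(d)$ span a $\|\cdot\|_\infty$--dense subspace, so that the bounded map $E$ is forced to equal $\mu(\cdot)1$ everywhere (equivalently, that positive-definite radial kernels are a distance-distribution-determining family, whereupon uniqueness of uniformly distributed measures closes the gap through $\nu=\mu$). For finite $X$ this is immediate, since a strictly diagonally dominant $\phi$ makes $\phi(d)$ positive definite of full rank; for a general compact $(X,d)$ this density is the delicate point, and is where I expect the real work to lie, either by exhibiting a sufficiently rich supply of positive-definite functions of $d$ adapted to $\mu$, or by approximating $(X,\mu)$ by finite uniformly distributed subspaces compatibly with the action.
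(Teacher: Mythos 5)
Your positivity computation (the Mercer expansion plus the vanishing sum of squares) is elegant and correct as far as it goes, but the proposal does not prove the theorem, for two reasons. First, the map $\alpha_2(f\otimes g)=\alpha(f)_{13}\alpha(g)_{23}$ is \emph{not} a $*$-homomorphism in general: for $x\ne y$ the elements $\alpha(f)(x)$ and $\alpha(g)(y)$ need not commute (already for $S_n^+$ acting on $n$ points the entries $u_{xj}$, $u_{yl}$ of the magic unitary do not commute), so $\alpha_2$ is neither multiplicative nor positive, and the functional-calculus step $\alpha_2(\phi(d))=\phi(d)\otimes 1$ is unjustified as written. The two-point ``distance relation'' you extract is essentially the coupling statement of \cite[Theorem 3.1]{Chi15} and does hold for isometric actions, but it requires that theorem rather than functional calculus; this part is repairable by citation. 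Second, and fatally, the density step you flag at the end is a genuine gap: you need the eigenfunctions of the kernels $\phi(d)$, taken over those continuous $\phi$ for which $\phi(d)$ is positive semidefinite, to span a norm-dense subspace of $C(X)$. For a general compact metric space there is no reason for any non-constant such $\phi$ to exist (positive definiteness of radial kernels is a strong Hilbert-space-embeddability constraint on $(X,d)$), so as it stands your argument may only establish $E(f)=\mu(f)1$ on the constants.

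The paper's proof avoids both problems by never leaving the world of balls. It uses the antipode form of the isometry condition, $a_{x;B(y,r)}=\kappa(a_{y;B(x,r)})$ from \cite[equation (13)]{Chi15}, to rewrite $(\mu\triangleleft\varphi)(B(y,r))=\int_X\varphi(a_{x;B(y,r)})\,\mathrm{d}\mu(x)$ as $\int_X(\mathrm{ev}_y\triangleleft(\varphi\circ\kappa))(B(x,r))\,\mathrm{d}\mu(x)$; the key point is that after this flip the integration variable $x$ has become the \emph{center} of the ball, so \Cref{le.munu} (a one-line Fubini computation valid for an arbitrary probability measure $\nu$ in place of $\mathrm{ev}_y\triangleleft(\varphi\circ\kappa)$, using only the uniform distribution of $\mu$) evaluates the integral as $\mu(B(y,r))$. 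Thus $\mu\triangleleft\varphi$ assigns the same mass to every ball as $\mu$ does, and uniqueness of uniformly distributed probability measures gives $\mu\triangleleft\varphi=\mu$. If you want to salvage your approach, the missing move is exactly this use of $\kappa$ to swap the roles of the ball's center and the integrated variable; it replaces the positive-definiteness and density requirements entirely.
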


Coupling this with the previous remarks on the existence of $QAUT(X,d,\mu)$, it follows that all such metric spaces $(X,d)$ have quantum isometry groups. These need not be classical, in general: perhaps the ``simplest'' example is the quantum symmetric group $S_n^+$ introduced in \cite[\S 3]{wang}: it can be recast as $QAUT(X,d)$ where
\begin{equation*}
  X=\{1,\cdots,n\}
\end{equation*}
and $d$ is the uniform distance:
\begin{equation*}
  d(i,j)=
  \begin{cases}
    0&\text{if } i=j\\
    1&\text{otherwise}
  \end{cases}
\end{equation*}

The paper is organized as follows.

\Cref{se.prel} recalls some background needed later, on the various topics we touch on (compact quantum groups, their actions, Riemannian geometry, etc.).

In \Cref{se.smth} we prove some preliminary results on smooth actions, building on some of the material from \cite{gafa,final}.

Finally, \Cref{se.main} contains the main results of the paper. \Cref{th.nbdry} proves that faithful isometric quantum actions on connected closed Riemannian manifolds are classical and \Cref{th.rig-bdry} extends this to compact connected manifolds with boundary. In the course of unwinding the argument we prove other results that might be of some independent interest:
\begin{itemize}
\item Recall that a homeomorphism of a topological manifold automatically preserves its boundary. We prove in \Cref{pr.bdry-inv} that similarly, a quantum isometric action on a compact connected manifold leaves the boundary invariant. 
\item We also prove in \Cref{cor.faith} that (once more, as expected from the classical situation) if a quantum isometric action as above is faithful and all connected components of the compact manifold acted upon have non-empty boundary then the restriction of the action to the boundary is again faithful.

\item In \Cref{th.doubled} we extend a quantum action $\alpha$ on a compact manifold with boundary to the {\it double} $M\cup_{\partial M}M$ of the manifold in the sense of \cite[Example 9.32]{lee} and show that the doubled action retains some of the relevant properties of $\alpha$.
\end{itemize}

Finally, in \Cref{subse.unif} we prove that compact metric spaces which admit uniformly distributed probability measures have quantum isometry groups.

\subsection*{Acknowledgements}

We would like to express our warm thanks to the anonymous referees for a very thorough treatment of the initial draft; we believe their suggestions have improved the material considerably.

\section{Preliminaries}\label{se.prel}

\subsection{Notational conventions}

We write $\cB(\cH)$ for the algebra of bounded operators on a Hilbert space $\cH$ and $\cB_0(\cH)$ for the ideal of compact operators. $Sp$, $\overline{Sp}$ denote the linear span and respectively the closed linear span of elements of a vector space (closed in whatever topology is relevant to the discussion).

Several flavors of tensor products appear below:
\begin{itemize}
\item $\otimes$ is the minimal tensor product between $C^*$-algebras and more generally locally convex spaces and on one occasion, the spatial tensor product between von Neumann algebras.
\item $\overline{\otimes}$ stands for the tensor product of Hilbert spaces and modules. 
\item $\otimes_{\rm alg}$ is the algebraic tensor product between vector spaces, non-topological algebras, etc.
\item $T\otimes S$ denotes the tensor product of maps $S$ and $T$ in {\it all} of the above-mentioned cases.
\end{itemize}

We denote by $C(X)$ or $C^{\infty}(X)$ the spaces of continuous and smooth complex-valued functions on $X$ respectively and add an `$\bR$' to indicate real-valued functions, as in $C^{\infty}(X,\bR)$. 


\subsection{Compact quantum groups and their actions}\label{subse:cqg}

We need some basic material on compact quantum groups and their actions on non-commutative spaces, as covered, say, in \cite{Van,Pseudogroup,Wor98}. The present section serves to recall some of this material.

A compact quantum group (CQG for short) is a unital $C^{\ast}$ algebra $\clq$ equipped with a $C^*$-algebra morphism $\Delta$, {\it coassociative} in the sense that
\begin{equation*}
  \begin{tikzpicture}[auto,baseline=(current  bounding  box.center)]
    \path[anchor=base] (0,0) node (1) {$\cQ$} +(2,.5) node (2) {$\cQ\otimes \cQ$} +(2,-.5) node (3) {$\cQ\otimes \cQ$} +(4,0) node (4) {$\cQ\otimes\cQ\otimes \cQ$};
  \draw[->] (1) to[bend left=6] node[pos=.5,auto] {$\scriptstyle \Delta$} (2);
  \draw[->] (1) to[bend right=6] node[pos=.5,auto,swap] {$\scriptstyle \Delta$} (3);
  \draw[->] (2) to[bend left=6] node[pos=.5,auto] {$\scriptstyle \Delta\otimes\id$} (4);
  \draw[->] (3) to[bend right=6] node[pos=.5,auto,swap] {$\scriptstyle \id\otimes \Delta$} (4);      
\end{tikzpicture}
\end{equation*}
commutes and such that 
\begin{equation*}
  \Delta(\clq)(\clq\ot 1),\quad \Delta(\clq)(1\ot \clq)\quad \subseteq\quad \cQ\otimes \cQ
\end{equation*}
are both norm-dense. This suffices to ensure the existence of a unique dense Hopf $*$-subalgebra $\cQ_0\subseteq \cQ$, equipped with a counit $\varepsilon:\cQ_0\to \bC$ and an antipode $\kappa:\cQ_0\to \cQ_0$.

For every compact quantum group $\cQ$ the convolution multiplication
\begin{equation*}
 \begin{tikzpicture}[auto,baseline=(current  bounding  box.center)]
   \path[anchor=base] (0,0) node (1) {$\cQ$} +(2,.5) node (2) {$\cQ\otimes \cQ$} +(4,0) node (3) {$\bC$}; \draw[->] (1) to[bend left=6] node[pos=.5,auto] {$\scriptstyle \Delta$} (2); \draw[->] (2) to[bend left=6] node[pos=.5,auto] {$\scriptstyle \varphi\otimes\psi$} (3); \draw[->] (1) to[bend right=6] node[pos=.5,auto,swap] {$\scriptstyle \varphi*\psi$} (3);
 \end{tikzpicture}
\end{equation*}
of states $\varphi$ and $\psi$ makes the state space $S(\cQ)$ (or $\mathrm{Prob}(\cQ)$) of $\cQ$ into a semigroup (or monoid if $\cQ$ has a bounded counit).

A compact quantum group $\cQ$ has a unique {\it Haar state} $h$ characterized by the fact that it ``absorbs'' every other state under convolution:
\begin{equation*}
  \varphi*h = h*\varphi = h,\ \forall \varphi\in S(\cQ). 
\end{equation*}

A compact quantum group is {\it reduced} if its Haar state is faithful. Every compact quantum group $\cQ$ has a reduced version $\cQ_r$ defined as the image of the GNS representation of the Haar state. The comultiplication of $\cQ$ descends through the quotient $\cQ_r$, making the latter into a CQG again.

\begin{definition}\label{def:act}
\label{def.CQG_action}
A unital $\ast$-homomorphism $\alpha:\clc\raro \clc \ot \clq$, where $\clc$ is a unital $C^\ast$-algebra and $\clq$ is a CQG, is said to be an action of $\clq$ on $\clc$ if
 \begin{enumerate}[(1)]
 \item\label{item:11} the diagram
   \begin{equation*}
 \begin{tikzpicture}[auto,baseline=(current  bounding  box.center)]
  \path[anchor=base] (0,0) node (1) {$\cC$} +(2,.5) node (2) {$\cC\otimes \cQ$} +(2,-.5) node (3) {$\cC\otimes \cQ$} +(4,0) node (4) {$\cC\otimes\cQ\otimes \cQ$};
  \draw[->] (1) to[bend left=6] node[pos=.5,auto] {$\scriptstyle \alpha$} (2);
  \draw[->] (1) to[bend right=6] node[pos=.5,auto,swap] {$\scriptstyle \alpha$} (3);
  \draw[->] (2) to[bend left=6] node[pos=.5,auto] {$\scriptstyle \alpha\otimes\id$} (4);
  \draw[->] (3) to[bend right=6] node[pos=.5,auto,swap] {$\scriptstyle \id\otimes \Delta$} (4);      
 \end{tikzpicture}
\end{equation*}
commutes (co-associativity) and
 \item\label{item:12} $Sp \ \alpha(\clc)(1\ot \clq)$ is norm-dense in $\clc \ot \clq$.    
 \end{enumerate}
 
 $\alpha$ is {\it faithful} if
 \begin{equation*}
   Sp\{(\varphi\otimes\id)\alpha(x)\ |\ x\in \cC,\ \varphi\in S(\cC)\}
 \end{equation*}
generates $\cQ$ as a $C^*$-algebra. 
\end{definition}
An action $\alpha$ as in \Cref{def.CQG_action} induces a right action of the semigroup $S(\cQ)$ introduced above on the state space $S(\cC)$ of $\cC$, denoted by $\triangleleft$ and defined by
\begin{equation}\label{eq:12}
 \begin{tikzpicture}[auto,baseline=(current  bounding  box.center)]
  \path[anchor=base] (0,0) node (1) {$\cC$} +(2,.5) node (2) {$\cC\otimes \cQ$} +(4,0) node (3) {$\bC$.};
  \draw[->] (1) to[bend left=6] node[pos=.5,auto] {$\scriptstyle \alpha$} (2);
  \draw[->] (2) to[bend left=6] node[pos=.5,auto] {$\scriptstyle \varphi\otimes\psi$} (3);
  \draw[->] (1) to[bend right=6] node[pos=.5,auto,swap] {$\scriptstyle \varphi\triangleleft\psi$} (3);    
 \end{tikzpicture}
\end{equation}

An action $\alpha$ of $\clq$ on $\clc$ induces an action $\alpha_r$ by the reduced version $\clq_r$ of $\cQ$:
\begin{equation*}
 \begin{tikzpicture}[auto,baseline=(current  bounding  box.center)]
  \path[anchor=base] (0,0) node (1) {$\cC$} +(2,.5) node (2) {$\cC\otimes \cQ$} +(4,0) node(3){$\cC\otimes \cQ_r$,};
  \draw[->] (1) to[bend left=6] node[pos=.5,auto] {$\scriptstyle \alpha$} (2);
  \draw[->] (2) to[bend left=6] node[pos=.5,auto] {$\scriptstyle \id\otimes \pi_{\cQ}$} (3);
  \draw[->] (1) to[bend right=6] node[pos=.5,auto,swap] {$\scriptstyle \alpha_r$} (3);    
 \end{tikzpicture}
\end{equation*}
where $\pi_{\cQ}:\cQ\to \cQ_r$ is the canonical surjection. The original action $\alpha$ is faithful if and only if $\alpha_r$ is.

For every action $\alpha$ there is a dense $*$-subalgebra $\cC_0\subseteq \cC$ on which $\alpha$ restricts to a purely algebraic coaction of the Hopf algebra $\cQ_0\subseteq \cQ$:
\begin{equation*}
 \begin{tikzpicture}[auto,baseline=(current  bounding  box.center)]
  \path[anchor=base] (0,0) node (1) {$\cC_0$} +(2,.5) node (mu) {$\cC$} +(2,-.5) node (md) {$\cC_0\otimes_{\rm alg}\cQ_0$} +(4,0) node (2) {$\cC\otimes \cQ$,};
  \draw[right hook->] (1) to[bend left=6] node[pos=.5,auto] {$\scriptstyle $} (mu);
  \draw[->] (mu) to[bend left=6] node[pos=.5,auto] {$\scriptstyle \alpha$} (2);  
  \draw[right hook->] (md) to[bend right=6] node[pos=.5,auto] {$\scriptstyle $} (2);
  \draw[->] (1) to[bend right=6] node[pos=.5,auto] {$\scriptstyle $} (md);    
 \end{tikzpicture}
\end{equation*}
where the hooked arrows are the obvious inclusions.


Following \cite{Wor98} (or rather paraphrasing it), recall that a {\it unitary representation} of a CQG $(\cQ,\Delta)$ on a Hilbert space $\cH$ is a unitary $U$ in the space $\cL(\cH\overline{\otimes}\cQ)$ of adjointable operators on the Hilbert $\cQ$-module $\cH\overline{\otimes}\cQ$ such that the linear map
\begin{equation*}
  V:\cH\to \cH\overline{\otimes}\cQ,\quad V(\xi):=U(\xi\otimes 1)
\end{equation*}
makes
\begin{equation*}
  \begin{tikzpicture}[auto,baseline=(current  bounding  box.center)]
    \path[anchor=base] (0,0) node (1) {$\cH$} +(2,.5) node (2) {$\cH\overline{\otimes} \cQ$} +(2,-.5) node (3) {$\cH\overline{\otimes} \cQ$} +(4,0) node (4) {$\cH\overline{\otimes}\cQ\otimes \cQ$};
    \draw[->] (1) to[bend left=6] node[pos=.5,auto] {$\scriptstyle V$} (2);
    \draw[->] (1) to[bend right=6] node[pos=.5,auto,swap] {$\scriptstyle V$} (3);
    \draw[->] (2) to[bend left=6] node[pos=.5,auto] {$\scriptstyle V\otimes\id$} (4);
    \draw[->] (3) to[bend right=6] node[pos=.5,auto,swap] {$\scriptstyle \id\otimes \Delta$} (4);      
  \end{tikzpicture}
\end{equation*}
commute. 

\begin{definition}\label{def.impl}
  An action $\alpha$ as in \Cref{def.CQG_action} is {\it implemented} by a unitary representation of $\cQ$ on $\cH$ if we can represent
  \begin{equation*}
    \pi:\cC\subset \cB(\cH)
  \end{equation*}
faithfully on a Hilbert space such that   
\begin{equation*}
  \alpha(a)=U(\pi(a)\otimes 1)U^{-1}
\end{equation*}
for all $a\in \cC$
\end{definition}

It is not difficult to see that if $\alpha$ is implemented by a unitary representation $U$ then it is one-to-one (or injective). We can say even more: $U$ induces a unitary representation
\begin{equation*}
  U_r:=(\id\otimes \pi_{\cQ})U
\end{equation*}
of $\cQ_r$, where $\pi_{\cQ}:\cQ\to \cQ_r$ is reduction surjection. It is then easy to check that $U_r$ implements the reduced counterpart $\alpha_r$ of $\alpha$, and hence $\alpha_r$ is injective.  

The converse also holds: an injective reduced action $\alpha$ is implemented by a unitary representation $U$ of $\cQ$. To see this, consider the family of all states $\varphi_i$, $i\in I$ on $\cC$. Since
\begin{equation*}
  \alpha:\cC\to \cC\otimes \cQ
\end{equation*}
is an embedding, the compositions
\begin{equation*}
  \overline{\varphi_i}:=\varphi_i\triangleleft h = (\varphi_i\otimes h)\circ \alpha
\end{equation*}
defined in \Cref{eq:12} form a jointly faithful family on $\cC$, and hence the direct sum of their attached GNS representations
\begin{equation*}
  \rho_i:\cC\to \cB(L^2(\cC,\overline{\varphi_i}))
\end{equation*}
is faithful. Furthermore, because each $\overline{\varphi_i}$ is {\it invariant} under $\alpha$ in the sense that
\begin{equation*}
  \overline{\varphi_i}\triangleleft \psi = \overline{\varphi_i}
\end{equation*}
for all states $\psi\in S(\cQ)$, the map
\begin{equation*}
  a\otimes q\mapsto \alpha(a)(1\otimes q),\ a\in \cC,\ q\in \cQ
\end{equation*}
extends to a unitary representation of $\cQ$ on the underlying space
\begin{equation*}
  \bigoplus_{i\in I} L^2(\cC,\overline{\varphi_i})
\end{equation*}
of the direct sum representation $\bigoplus_i \rho_i$ which as desired, implements $\alpha$.

When $\cC$ is classical, i.e. $C(X)$ for a compact Hausdorff space $X$, the invariant states $\overline{\varphi_i}$ are probability measures $\mu_i$ on $X$ and hence $\alpha$ is induced by a unitary representation of $\cQ$ on the direct sum of Hilbert spaces $L^2(X,\mu_i)$. Furthermore, when $X$ admits a {\it single} faithful probability measure (e.g. if $X$ is metrizable) then we can represent $\cQ$ on a single Hilbert space $L^2(X,\mu)$. 

\subsection{Isometric actions}\label{subse.metric}

Let $(X,d)$ be a compact metric space and $\cQ$ a compact quantum group acting faithfully on $X$. 

We always assume $\cQ$ has a bounded antipode whenever referring to isometric actions. This is mostly harmless in our circumstances: according to \cite[Theorem 3.16]{hua-inv} compact quantum groups acting faithfully on a unital $C^*$-algebra so as to preserve a tracial state are automatically {\it of Kac type} in the sense that their antipodes are involutive ($\kappa^2=\id$) on the unique dense Hopf subalgebra of $\cQ$. $\kappa$ then descends to a bounded multiplication-reversing $*$-automorphism of the reduced counterpart $\cQ_r$ of $\cQ$ and we can always pass to the reduced version $\alpha_r$ of the action $\alpha$.

We follow \cite[Definition 3.1 and Lemma 3.2]{metric} in defining the notion of an isometric action of a compact quantum group $\cQ$ on $X$:

\begin{definition}\label{def.isometric}
  Let $(X,d)$ be a compact metric space and write $d_x$ for the function $d(x,-)$ and $\cC=C(X)$.
  
  A faithful action $\alpha:\cC\to \cC\otimes \cQ$ is {\it isometric} if
  \begin{equation*}
    \alpha(d_x)(y)=\kappa(\alpha(d_y)(x))
  \end{equation*}
  for all $x,y\in X$, where $\kappa$ is the bounded antipode of $\cQ$.
\end{definition}

Note that if $\alpha$ is isometric then so is $\alpha_r$, and moreover by \cite[Proposition 3.10]{Chi15}
\begin{equation*}
  \alpha_r:\cC\to \cC\otimes \cQ
\end{equation*}
is one-to-one. We will make crucial use of this below.

\subsection{Manifolds with boundary}\label{subse:prel-bdry}

In \Cref{subse.bdry} we work extensively with (compact) smooth and Riemannian manifolds with boundary. Much of the general background extends from the boundaryless case without issue, but it is sometimes difficult to locate appropriate references in the literature. With that in mind, we give a few references here.

\cite{lee} is a good overall source, given that care is taken throughout to phrase results so that they apply to manifolds with boundary. In particular, smooth structures on such manifolds are introduced on \cite[pp.27-29]{lee} and Riemannian structures in \cite[Chapter 13]{lee}. Assuming for simplicity that our manifold is embedded into a Euclidean space of the same dimension (the reasoning carries through in general by picking coordinate patches, etc.), the discussion on \cite[p.27]{lee} shows that the following two conditions on a function $f:M\to \bR$ (or $\bC$) are equivalent:
\begin{itemize}
\item $f$ extends to a smooth map on an open neighborhood of $M$;
\item $f$ is continuous on $M$, smooth in the interior of $M$, and all of its partial derivatives extend continuously to the boundary $\partial M$. 
\end{itemize}

These are the functions one naturally regards as smooth on $M$ even when $\partial M\ne \emptyset$, and we denote the algebra they constitute by
\begin{itemize}
\item $C^{\infty}(M)$ for complex-valued functions;
\item $C^{\infty}(M,\bR)$ in the real-valued case. 
\end{itemize}
We focus on the former to fix ideas, but everything of substance mentioned below is valid for real-valued functions. 

The suprema of all of the partial derivatives form a family of seminorms making $C^{\infty}(M)$ into a locally convex topological vector space, and one proves as ``usual'' (i.e. in the boundaryless case) that $C^{\infty}(M)$ is nuclear (e.g. \cite[Corollary to Theorem 51.5]{trev}). 

We will also work with smooth maps on $M$ valued in a $C^*$-algebra (like, say, the compact quantum group function algebras $\cQ$ discussed above). We denote these by
\begin{equation*}
  C^{\infty}(M,\cQ):=C^{\infty}(M)\otimes \cQ,
\end{equation*}
where the tensor of locally convex vector spaces is unambiguous by nuclearity. Concretely, the elements of $C^{\infty}(M,\cQ)$ are those functions $M\to \cQ$ which
\begin{itemize}
\item are continuous on $M$;
\item smooth in the interior of $M$ in the usual sense that all higher derivatives exist;
\item admit continuous extensions of all partial derivatives to the boundary $\partial M$. 
\end{itemize}

We will need the following version of Nachbin's approximation theorem for algebras of smooth functions (e.g. \cite[Theorem 1.2.1]{llav}), which is usually stated as the equivalence of \Cref{item:7} and \Cref{item:8} below.

\begin{theorem}\label{th:nachb}
  Let $M$ be an $n$-dimensional compact smooth manifold and $\cA\subseteq C^{\infty}(M,\bR)$ a unital subalgebra. The following conditions are equivalent:
  \begin{enumerate}[(1)]
  \item\label{item:7} $\cA\subseteq C^{\infty}(M,\bR)$ is Fr\'echet-dense;
  \item\label{item:8} $\cA$ separates points and tangent vectors, in the sense that for each non-zero vector $v\in T_xM$ we have $df_x(v)\ne 0$ for some $f\in \cA$;
  \item\label{item:9} $\cA$ separates points and
    \begin{equation*}
      \dim\{df_x,\ f\in \cA\}=n,\quad \forall x\in M. 
    \end{equation*}
  \item\label{item:10} There is some finite subset $f_i\in \cA$, $1\le i\le k$ such that
    \begin{equation*}
      M\ni x\mapsto (f_1(x),\ \cdots,\ f_k(x))\in \bR^k
    \end{equation*}
    is an embedding. 
  \end{enumerate}
\end{theorem}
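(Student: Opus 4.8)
The plan is to prove the four conditions equivalent by establishing the cyclic chain of implications \Cref{item:7} $\Rightarrow$ \Cref{item:8} $\Rightarrow$ \Cref{item:9} $\Rightarrow$ \Cref{item:10} $\Rightarrow$ \Cref{item:7}, isolating the last implication as the only genuinely analytic step; everything else is differential topology and pointwise linear algebra. For \Cref{item:7} $\Rightarrow$ \Cref{item:8} I would first note that the full algebra $C^\infty(M,\bR)$ separates points and tangent vectors (locally by coordinate functions, or globally by the components of any smooth embedding of $M$ into a Euclidean space), and that the Fr\'echet topology controls first derivatives uniformly; hence if $f_n\in\cA$ approximate a $g$ with $g(p)\ne g(q)$, resp.\ $dg_x(v)\ne 0$, closely enough in the relevant seminorms, then $f_n$ inherits the same separation, so $\cA$ separates points and tangent vectors. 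The equivalence \Cref{item:8} $\Leftrightarrow$ \Cref{item:9} is then pure linear algebra at each $x$: separating tangent vectors says the functionals $\{df_x:f\in\cA\}\subseteq T_x^*M$ have trivial common kernel, which in an $n$-dimensional space is the same as $\dim\{df_x:f\in\cA\}=n$, and the ``separates points'' clause is common to both.

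For the extraction \Cref{item:9} $\Rightarrow$ \Cref{item:10} I would use compactness twice. At each $x$ pick $f_1^x,\dots,f_n^x\in\cA$ whose differentials are independent at $x$, hence on a neighborhood $U_x$; a finite subcover assembles these functions into a map $G:M\to\bR^m$ that is an immersion, in particular locally injective, so the diagonal $\Delta\subseteq M\times M$ admits a neighborhood $N$ with $\{G(p)=G(q)\}\cap N=\Delta$. The set $Z=\{(p,q):G(p)=G(q)\}\setminus N$ is then a compact subset of $M\times M$ disjoint from $\Delta$, and on it ``separates points'' together with compactness yields finitely many $g_1,\dots,g_l\in\cA$ separating every off-diagonal pair. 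Adjoining these to $G$ produces $F=(G,g_1,\dots,g_l):M\to\bR^k$ that is simultaneously an immersion and injective, and an injective immersion from a compact manifold is an embedding.

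The crux is \Cref{item:10} $\Rightarrow$ \Cref{item:7}. Given the embedding $F=(f_1,\dots,f_k)$ with $f_i\in\cA$, its image is a compact embedded submanifold and $F$ is a diffeomorphism onto it, so I would pull density back from $\bR^k$. Any $g\in C^\infty(M,\bR)$ transports to a smooth function on $F(M)$, which extends to some $\widetilde{g}\in C^\infty(\bR^k,\bR)$ via a tubular neighborhood. On a compact ball $B\supseteq F(M)$ polynomials are dense in $C^\infty(B)$ — provable by multiplying $\widetilde{g}$ by a cutoff, convolving with a Gaussian to obtain an entire function, and truncating its globally convergent Taylor series, all converging with every derivative uniformly on $B$ — so choose polynomials $p_j\to\widetilde{g}$ in that topology. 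Then $p_j(f_1,\dots,f_k)\in\cA$ since $\cA$ is a unital algebra, and $p_j(f_1,\dots,f_k)=F^*p_j\to F^*\widetilde{g}=g$ in the Fr\'echet topology of $C^\infty(M,\bR)$, because pullback along the fixed smooth map $F$ is Fr\'echet-continuous (the chain rule bounds each seminorm of $F^*h$ by finitely many seminorms of $h$ and the fixed derivatives of $F$). This exhibits $g\in\overline{\cA}$, proving \Cref{item:7}.

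The main obstacle is precisely this last implication, and within it the two ingredients requiring care: the $C^\infty$ (not merely uniform) density of polynomials on compacta of $\bR^k$, and the smooth-extension statement off the embedded copy of $M$. For a manifold with boundary — the case we ultimately need — the extension step is in fact cleanest in the formulation of \Cref{subse:prel-bdry}, where smoothness on $M$ already \emph{means} extendability to an ambient neighborhood; one may then take $F$ to be a restriction of the ambient embedding and run the same mollify-and-truncate approximation in the ambient $\bR^N$, so that no genuinely new difficulty arises at $\partial M$.
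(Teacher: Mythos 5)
Your proposal is correct and follows the same cyclic skeleton \Cref{item:7} $\Rightarrow$ \Cref{item:8} $\Rightarrow$ \Cref{item:9} $\Rightarrow$ \Cref{item:10} $\Rightarrow$ \Cref{item:7} as the paper, but the pivotal implication \Cref{item:9} $\Rightarrow$ \Cref{item:10} is handled by a genuinely different mechanism. The paper first upgrades point-separation to sup-norm density via Stone--Weierstrass, manufactures smooth bump functions inside the Fr\'echet \emph{closure} $\overline{\cA}$, and glues local coordinate embeddings together using those bumps; you instead assemble a global immersion $G$ from finitely many local coordinate tuples, observe that the off-diagonal coincidence set $Z=\{G(p)=G(q)\}\setminus N$ is compact and disjoint from the diagonal, and kill it with finitely many separating functions chosen by compactness. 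Your route has two advantages: it never leaves $\cA$ (the paper's tuple $\psi_{j,j'}f_{j,i}$ a priori lives only in $\overline{\cA}$, which strictly speaking requires either a $C^1$-small perturbation back into $\cA$ or a reformulation of \Cref{item:10}, a point the paper glosses over), and it avoids Stone--Weierstrass entirely. What the paper's approach buys is the explicit bump-function machinery, which is reused in spirit elsewhere. For \Cref{item:10} $\Rightarrow$ \Cref{item:7} the paper offers a single sentence where you supply the full argument (tubular-neighborhood extension, Gaussian mollification, Fr\'echet continuity of $F^*$); your version is the honest proof of what the paper asserts, and your closing remark about the boundary case correctly identifies why the extension step is unproblematic given the paper's definition of $C^\infty(M)$ in \Cref{subse:prel-bdry}.
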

\begin{proof}
  {\bf \Cref{item:7} $\Rightarrow$ \Cref{item:8}} This is immediate from the fact that certainly, $C^{\infty}(M,\bR)$ itself satisfies the separation conditions in \Cref{item:8}.

  {\bf \Cref{item:8} $\Rightarrow$ \Cref{item:9}} If the differentials $df_x$ spanned a space of dimension $<n$ then they would have to annihilate some non-zero vector in the $n$-dimensional space $T_xM$. 

  {\bf \Cref{item:9} $\Rightarrow$ \Cref{item:10}} We denote by $\overline{\cA}\subseteq C^{\infty}(M,\bR)$ the Fr\'echet closure of $\cA$ and seek to show that the inclusion is an equality.

  For an arbitrary $x\in M$ some $k$-tuple
  \begin{equation*}
    \Psi:=(f_1,\cdots,f_k)\in \cA^k
  \end{equation*}
  has non-zero Jacobian around $x$ and hence is a local $C^\infty$ coordinate system around $x$. We thus have {\it local} embeddability by functions in $\cA$. 

  By the point-separation assumption (and the fact that $\cA$ is unital) the standard Stone-Weierstrass theorem (e.g. \cite[p.122]{rud-fa}) shows that $\cA$ is dense in $C(M,\bR)$ with the supremum norm. It follows that for every inclusion
  \begin{equation*}
    \overline{V}\subset U
  \end{equation*}
  with open $U,V\subseteq M$ and every $\varepsilon>0$ there are functions $\varphi\in\cA$ with
  \begin{equation*}
   \sup_{V}|1-\varphi|<\varepsilon\quad \text{and}\quad \sup_{M\setminus U}|\varphi|<\varepsilon.
  \end{equation*}
  Since there are smooth functions $\theta:\bR\to \bR$ with
  \begin{equation*}
    \theta\circ \varphi|_V\equiv 1\quad\text{and}\quad \theta\circ \varphi|_{M\setminus U}\equiv 0
  \end{equation*}
  and $\theta\circ \varphi\in \overline{\cA}$, the latter algebra contains arbitrary ``bump'' functions: equal to $1$ in any given open set $V$ and $0$ outside any given superset of the closure of $V$.

  Because $M$ is compact, the above local-embeddability conclusion and the existence of bump functions show that we can cover $M$ with finitely many open $U_j$ such that
  \begin{itemize}
  \item functions $f_{j,i}\in \cA$, $1\le i\le k_j$ implement an embedding into $\bR^{k_j}$ of the union of all $U_{j'}$ with
    \begin{equation*}
      \overline{U_j}\cap \overline{U_{j'}}\ne \emptyset;
    \end{equation*}
  \item for $j\ne j'$ such that
    \begin{equation*}
      \overline{U_j}\cap \overline{U_{j'}}= \emptyset;
    \end{equation*}
    we have a function $\psi_{j,j'}$ equal to $1$ on $U_j$ and $0$ on $U_{j'}$. 
  \end{itemize}
  The tuple $\psi_{j,j'}f_{j,i}$ (for all $i$, $j$ and $j'$ as above) will be the desired $f_i$, $1\le i\le k$.
  
  {\bf \Cref{item:10} $\Rightarrow$ \Cref{item:7}} Every smooth function on $M$ will be Fr\'echet-approximable by polynomials in the $f_i$, $1\le i\le k$.
\end{proof}

\begin{remark}\label{re.loc-not-glob}
  The purely local condition on the differentials of $f\in \cA$ would not have sufficed in \Cref{item:8} or \Cref{item:9} of \Cref{th:nachb}: consider for instance the algebra $\cA$ of {\it even} smooth functions on the standard sphere $\bS^n$ (`even' in the sense that $f(x)=f(-x)$). It satisfies the local condition but not the point-separation requirement in the statement above.
\end{remark}

\subsection{Riemannian geometry}\label{subse.ri}

This will be very brief, as good reference sources abound (the reader can consult \cite{doC92,ccl} for instance), though references are much richer for manifolds {\it without} boundary. All of our manifolds are assumed compact, smooth and connected unless specified otherwise.

Given a (compact, smooth, connected) Riemannian manifold $M$ we typically denote by $d$ its {\it geodesic distance}, i.e.
\begin{equation*}
  d(x,y) = \inf_{\gamma}\text{length of $\gamma$},
\end{equation*}
where $\gamma$ ranges over the Lipschitz curves connecting $x$ and $y$ (e.g. \cite[p.2]{grom-struct}). According to the celebrated Hopf-Rinow theorem (e.g. \cite[p.9]{grom-struct}), the compactness (and hence completeness) of $M$ as a metric space under $d$ implies that the distance $d(x,y)$ is always achieved by a {\it minimizing geodesic} (\cite[Definition 1.9]{grom-struct}). In general, we follow standard convention in referring to a curve that locally achieves $d$ as a geodesic (e.g. \cite[Introduction]{abb-cauchy}).

There is a positive $\delta>0$ such that all functions
\begin{equation*}
  d^2_x(-) := d(x,-)^2,\ x\in \overset{\circ}{M} = M\setminus\partial M
\end{equation*}
are smooth on balls in the interior of $M$ of radius $\le \delta$. Indeed, we can simply choose $\delta$ sufficiently small to allow for {\it normal coordinates} in every such ball, where we recall (e.g. \cite[p.145]{ccl}) that a coordinate system on an open neighborhood $U$ of $x\in M$ is normal if the exponential map
\begin{equation*}
  \exp:T_xM\to M
\end{equation*}
maps some open ball around $0\in T_xM$ diffeomorphically onto $U$. The squared distance $d^2_x$ can then be identified, in $\delta$-small neighborhoods  around $x$, with the squared Euclidean distance; clearly, the latter is smooth. 

In order to ``cut off'' large problematic distances where $d^2$ might fail to be smooth we will often work with
\begin{equation}\label{eq:2}
  D(-,-) := \psi\circ d^2(-,-)
\end{equation}
for a smooth ``bump'' function $\psi:\bR\to \bR$ equal to the identity on, say, $\left(-\frac \delta 2,\frac\delta 2\right)$ and vanishing outside $(-\delta,\delta)$ (where $\delta>0$ is chosen sufficiently small, as explained above).

\begin{remark}\label{re.smth_Dy}
  For sufficiently small $\delta$ the map $D:M\times M\to \bR$ is smooth and hence so is
\begin{equation*}
  y\mapsto D_y\in C(M),
\end{equation*}
where smoothness of a map into a possibly-infinite-dimensional Banach space means $C^{\infty}$, as defined for instance on \cite[pp.7-8]{lang-diff}.
\end{remark}

When working with Riemannian manifolds $M$ with boundary $\partial M\ne \emptyset$ we take it for granted that the Riemannian structure can be extended to a closed (i.e. compact, boundary-less) manifold $N\supset M$. Such an extension result follows, for instance, from \cite[Theorem A]{pv}.

\section{Smooth actions revisited}\label{se.smth}

In the present section we work with closed manifolds only, i.e. the assumption $\partial M=\emptyset$ is in place throughout.

We refer to \cite{gafa} for a detailed discussion on the natural Fr\'echet topology of $C^\infty(M)$ as well as the space of $\clb$-valued smooth functions $C^\infty(M, \clb)$ for any Banach space $\clb$. Indeed, by the nuclearity of $C^\infty(M)$ as a locally convex space, $C^\infty(M, \clb)$ is the unique topological tensor product of $C^\infty(M)$ and $\clb$ in the category of locally convex spaces. This allows us to define $T \ot {\rm id}$ from $C^\infty(M, \clb)$ for any Fr\'echet continuous linear map $T $ from $ C^\infty(M) $ to $C^\infty(M)$ (or, more generally, to some other locally convex space).  We also recall from \cite{gafa} the space $\Omega^1(M)\equiv \Omega^1(C^\infty(M))$ of smooth one-forms and the space $\Omega^1(M, \clb)$ of smooth $\clb$-valued one-forms, as well as the natural extension of the differential map $d$ to a Fr\'echet continuous map from $C^\infty(M, \clb)$ to $\Omega^1(M, \clb)$. In fact, for $F \in C^\infty(M, \clb)$, the element $dF \in \Omega^1(M, \clb)$ is the unique element satisfying $({\rm id } \ot \xi)(dF(m))=(dF_\xi)(m),$ for every continuous linear functional $\xi$ on $\clb$, where $m \in M,$ $dF(m) \in T^*_m M \ot_{\rm alg} \clb$ and $F_\xi \in C^\infty(M)$ is given by $F_\xi(x):=\xi(F(x))~\forall x \in M.$

The notion of smooth action given below follows \cite{gafa}; we supplement it here with a weaker notion, as follows.

\begin{definition}\label{def.smth}
  An action $\alpha$ of a CQG $\clq$ on $C(M)$ {\it weakly smooth} if
  \begin{equation*}
    \alpha(C^\infty(M)) \subseteq C^\infty(M, \clq).
  \end{equation*}
  $\alpha$ is {\it smooth} if it is weakly smooth and
  \begin{equation*}
    \overline{Sp} \ \alpha(C^{\infty}(M))(1\ot \clq) = C^\infty(M, \clq)
  \end{equation*}
  in the Fr\'echet topology.
 \end{definition}
 \begin{remark}
   In case $\clq=C(G)$ where $G$ is a compact group acting on $M$, say by $\alpha_g: x \mapsto gx$, the smoothness of the induced action $\alpha$ given by $\alpha(f)(x,g)=f(gx)$ on $C(M)$ in the sense of the above definition is equivalent to the smoothness of the map $M \ni x \mapsto gx$ for each $g$. Moreover, in this case smoothness and weak smoothness are equivalent. 
 \end{remark}

 It is proved in \cite[Corollary 3.3]{injective}that for any smooth action $\alpha$, the corresponding reduced action $\alpha_r$ is injective and hence it is implemented by some unitary representation. Moreover, the arguments in \cite{Podles} can be adapted to prove that for a smooth action there is a norm-dense unital $\ast$-subalgebra $\clc_0$ consisting of smooth functions on which $\alpha$ is algebraic. Indeed, this follows from the fact that the spectral projection $P_\pi$ corresponding to any irreducible unitary representation $\pi$ leaves $C^\infty(M)$ invariant and $P_\pi(C^\infty(M))$ is clearly norm-dense in $P_\pi(C(M))$ as $P_\pi$ is a norm-bounded linear operator.

 The norm-density of $\cC_0$ is crucial in \cite[\S 3.2]{final} in producing a Riemannian structure invariant (along with its Laplacian) under the action $\alpha$. We cannot rely on those results directly when the action is only weakly smooth, hence the additional effort below.
 
 We will prove an analogue of the main result of Subsection 3.1 of \cite{final}. To make sense of the statement, recall (e.g. \cite[Definition 3.3]{final})

 \begin{definition}\label{def:riempres}
   Let $M$ be a Riemannian manifold and $\alpha:C(M)\to C(M,\cQ)$ a weakly smooth action. Casting the Riemannian structure as a sesquilinear pairing
   \begin{equation*}
     \langle -,- \rangle:\Omega^1(M)\times \Omega^1(M)\to C^{\infty}(M)
   \end{equation*}
   on 1-forms (complex anti-linear in the first variable), we say that $\alpha$ {\it preserves} (or {\it leaves invariant}) the Riemannian structure if
   \begin{equation*}
     \langle d\alpha(f),d\alpha(g)\rangle_{\cQ}
     =
     \alpha(\langle df,dg\rangle),
     \ \forall f,g\in C^{\infty}(M)
   \end{equation*}
   where
   \begin{itemize}
   \item $d\alpha(f) = (d\otimes \id)\alpha(f)\in \Omega^1(M)\otimes \cQ$ simply applies the differential on the left hand leg of $\alpha(f)$ (this makes sense by weak smoothness);
   \item the $\cQ$-valued inner product is the extension of
     \begin{equation*}
       (\omega_1\otimes x, \omega_2\otimes y)\mapsto \langle \omega_1,\omega_2\rangle\otimes x^*y
     \end{equation*}
     for $\omega_i\in \Omega^1(M)$ and $x,y\in \cQ$. 
   \end{itemize}
 \end{definition}

We can now state
 
 \begin{theorem}\label{th.prsrv}
   \label{metric_pres} Let $\alpha$ be a weakly smooth action of a CQG $\clq$ on a compact Riemannian manifold $M$ such that the corresponding reduced action $\alpha_r$ is injective. Then $\alpha$ preserves some Riemannian metric on $M$. 
 \end{theorem}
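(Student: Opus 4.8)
The plan is to produce the invariant metric by \emph{averaging} an arbitrary one over $\cQ$ against the Haar state $h$, in the spirit of the classical construction $\bar g=\int_G \sigma^*g_0\,d\mu(\sigma)$. First I would fix any smooth Riemannian cometric on $M$, viewed as a sesquilinear pairing $\langle-,-\rangle_0$ on $\Omega^1(M)$ as in \Cref{def:riempres}. The first point to record is that weak smoothness already yields a usable algebraic skeleton: the spectral projections $P_\pi$ attached to the irreducible corepresentations $\pi$ are slice maps $(\id\otimes\rho_\pi)\alpha$ with $\rho_\pi$ bounded, so $\alpha(C^\infty(M))\subseteq C^\infty(M,\cQ)$ forces $P_\pi(C^\infty(M))\subseteq C^\infty(M)$; since each isotypic component is finite-dimensional and $P_\pi(C^\infty(M))$ is norm-dense in it, this produces a norm-dense $*$-subalgebra $\cC_0\subseteq C^\infty(M)$ on which $\alpha$ restricts to an algebraic coaction valued in $\cC_0\otimes_{\rm alg}\cQ_0$, where the antipode $\kappa$ is available.

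The naive average $(\id\otimes h)\langle d\alpha(f),d\alpha(g)\rangle_{\cQ}$ is \emph{not} a cometric: unwinding it in the commutative case $\cQ=C(G)$ shows that it depends on $df$ along the entire orbit of a point rather than on the value of $df$ at that point, i.e.\ it is nonlocal. The remedy, again visible classically, is to transport the differentials back along the action before pairing them, which in Hopf-algebraic terms is effected by the antipode. Concretely, for $f,g\in\cC_0$ I would set
\begin{equation*}
  \langle df,dg\rangle_1 := (\id\otimes h)(\id\otimes m_{\cQ})(\id\otimes\kappa\otimes\id)(\alpha\otimes\id)\,\langle d\alpha(f),d\alpha(g)\rangle_{\cQ,0},
\end{equation*}
where $m_{\cQ}$ denotes multiplication on $\cQ$; this is the quantum counterpart of $\bar g_x(df,dg)=\int_G\langle d(f\circ\sigma),d(g\circ\sigma)\rangle_0(\sigma^{-1}x)\,d\mu(\sigma)$. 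I would then extend $\langle-,-\rangle_1$ to all of $\Omega^1(M)$ by $C^\infty(M)$-sesquilinearity.

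The three things to verify are that $\langle-,-\rangle_1$ is a smooth, symmetric, positive-definite cometric and that $\alpha$ preserves it. Smoothness is routine, since the slice $(\id\otimes h)$ is Fr\'echet continuous and everything in sight lies in $C^\infty(M,\cQ)$. Invariance, i.e.\ $\langle d\alpha(f),d\alpha(g)\rangle_{1,\cQ}=\alpha(\langle df,dg\rangle_1)$, is a Sweedler-notation computation driven by coassociativity $(\alpha\otimes\id)\alpha=(\id\otimes\Delta)\alpha$, the antipode axioms on $\cQ_0$, and the defining invariance $(\id\otimes h)\Delta=h(\cdot)\1$ of the Haar state; this is precisely the quantum shadow of the right-invariance of $\mu$ that renders $\bar g$ invariant classically.

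The main obstacle I anticipate is positive-definiteness, which---unlike the classical case, where it is automatic because the $\sigma$ are diffeomorphisms---is exactly where the injectivity of $\alpha_r$ is forced into play. Here I would invoke the fact, recalled in \Cref{subse:cqg}, that an injective reduced action is implemented by a unitary representation $U$: faithful implementation gives $d\alpha(f)=0\Rightarrow df=0$, while faithfulness of the reduced Haar state prevents $(\id\otimes h)$ from annihilating the positive $\cQ$-valued quantity built from $\langle d\alpha(f),d\alpha(f)\rangle_{\cQ}$, so that $\langle df,df\rangle_1>0$ whenever $df\ne0$. A secondary hurdle, genuinely special to the weakly smooth setting, is the passage from $\cC_0$ to all of $C^\infty(M)$: the construction is born on the algebraic core, but to conclude that $\langle-,-\rangle_1$ is nondegenerate on every cotangent space---and that the invariance identity propagates off the core by continuity---one must know that the differentials of functions in $\cC_0$ span each $T^*_xM$. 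I would establish this via \Cref{th:nachb}, by verifying that $\cC_0$ separates points and tangent vectors (equivalently, that $\cC_0$ is Fr\'echet-dense in $C^\infty(M)$); securing this tangent-separation under the mere weak-smoothness hypothesis is the delicate technical heart of the argument, since norm-density of $\cC_0$ alone does not guarantee it.
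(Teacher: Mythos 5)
Your overall architecture---pass to the algebraic core $\cC_0$, average an arbitrary cometric against the Haar state, and use the unitary implementation of $\alpha_r$ together with faithfulness of the reduced Haar state for positive-definiteness---is the same averaging strategy the paper ultimately delegates to \cite[Theorem 3.5]{gafa}, and you correctly identify why weak smoothness still yields the norm-dense core $\cC_0$. However, you omit the one lemma that makes any such averaged pairing a cometric at all: the commutation relation $d\alpha(f)\alpha(g)=\alpha(g)\,d\alpha(f)$ (\Cref{eq:dfg}). Your extension ``by $C^\infty(M)$-sesquilinearity'' from exact forms $df$, $f\in\cC_0$, to all of $\Omega^1(M)$ is not automatically well defined: one must know that the averaged pairing at $x$ depends only on $df|_x$ (equivalently, that $\sum_i\phi_i\,df_i=0$ forces the corresponding combination of pairings to vanish), and this locality is precisely what the commutation relation delivers. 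Establishing it---by adapting \cite[Theorem 3.7 and Corollary 3.8]{final} using only the implementing unitary and weak smoothness---is the first half of the paper's proof and is absent from your proposal. (A smaller defect: your formula applies $\kappa$ to a leg of $\alpha\bigl(\langle d\alpha(f),d\alpha(g)\rangle_0\bigr)$ that has no reason to lie in $\cQ_0$, and no bounded antipode is assumed in this theorem.)

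More seriously, your route for leaving the core---prove Fr\'echet density of $\cC_0$ in $C^\infty(M)$ via \Cref{th:nachb}---is exactly the step the paper declares unavailable under mere weak smoothness: see the remarks preceding \Cref{th.prsrv} and in the proof of \Cref{main}, where it is stressed that $\cC_0$ is only norm-dense and that identities do not pass to $C^\infty(M)$ by Fr\'echet continuity. You flag this as the ``delicate technical heart'' but supply no argument, and proving tangent-separation for $\cC_0$ would seem to presuppose the kind of control the theorem is meant to establish. The paper's substitute is a norm-density bootstrap: for fixed $f\in\cC_0$ the relation \Cref{eq:dfg} is norm-continuous in the $g$-slot (only $\alpha(g)$, not $d\alpha(g)$, occurs there), hence extends to all $g\in C(M)$; the Leibniz rule applied to $\alpha(fg)=\alpha(f)\alpha(g)$ then converts this into $\alpha(f)\,d\alpha(g)=d\alpha(g)\,\alpha(f)$ for $g\in C^\infty(M)$, and a second norm-density pass in $f$ gives the relation for all $f,g\in C^\infty(M)$, after which \cite[Theorem 3.5]{gafa} applies directly on $C^\infty(M)$ with no density claim about $\cC_0$ needed. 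Without such a device your construction never leaves the core, and in particular positive-definiteness on every cotangent space is out of reach, since you have no control over $\{df|_x : f\in\cC_0\}$.
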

\begin{proof}
  If we carefully examine steps of \cite[Theorem 3.7 and Corollary 3.8]{final} it becomes clear that we only need the unitary $U$ which implements the action and the fact that
  \begin{equation*}
    \alpha(C^\infty(M))\subseteq C^\infty(M,\cQ). 
  \end{equation*}
  Adapting those arguments, we can conclude that
  \begin{equation}\label{eq:dfg}
    d\alpha(f)\alpha(g)=\alpha(g) d\alpha(f),\ \forall f,g \in \clc_0.
  \end{equation}
  However, $\clc_0$ is only norm-dense. using that, we get the above identity for all $g \in C(M)$ and all $f \in \clc_0$. Now, we fix $g \in C^\infty(M)$ and use the Leibniz rule (and the commutativity of $\alpha(f)$ with $\alpha(g)$) which gives $\alpha(f)d\alpha(g)=d\alpha(g)\alpha(f)$ for all $g \in C^\infty(M),~f \in \clc_0$. Again using the norm-density of $\clc_0$, we conclude $d\alpha(f)\alpha(g)=\alpha(g) d\alpha(f)$ for all $f,g \in C^\infty(M)$, hence the argument of \cite[Theorem 3.5]{gafa} applies and completes the proof of the present theorem.
\end{proof}

Following \cite[(3)]{gafa}, for $m\in M$ we denote
\begin{equation}\label{eq:qm'}
\cQ'_m:=\text{unital $*$-algebra generated by } \{\alpha(f)(m),\ (X\otimes\id)\alpha(f)(m)\}
\end{equation}
for $f\in C^{\infty}(M)$ and smooth vector fields $X$ on $M$. In the course of the proof of \Cref{th.prsrv} we have shown that

\begin{lemma}\label{le:qm'}
All $\cQ'_m$, $m\in M$ are commutative. 
\end{lemma}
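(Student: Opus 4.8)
The plan is to check commutativity on generators. By definition $\cQ'_m$ is the unital $*$-algebra generated by the two families
\[
A_g := \alpha(g)(m), \qquad B_{X,f} := (X\otimes\id)\alpha(f)(m),
\]
with $f,g$ ranging over $C^\infty(M)$ and $X$ over smooth (real) vector fields. Since $A_g^{*}=A_{\bar g}$ and $B_{X,f}^{*}=B_{X,\bar f}$ (differentiation along a real $X$ commutes with the $*$ on the $\cQ$-leg), both families are stable under the adjoint, so it suffices to show that any two generators commute. There are three cases: $A$--$A$, $A$--$B$, and $B$--$B$.

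First I would dispose of the two easy cases. The $A$-type generators commute because $(\mathrm{ev}_m\otimes\id)\circ\alpha\colon C(M)\to\cQ$ is the composite of the $*$-homomorphism $\alpha$ with the character $\mathrm{ev}_m$, hence itself a homomorphism out of a commutative algebra; concretely $A_gA_{g'}=\alpha(gg')(m)=\alpha(g'g)(m)=A_{g'}A_g$. The mixed $A$--$B$ case is exactly \Cref{eq:dfg}, which the proof of \Cref{th.prsrv} establishes for all $f,g\in C^{\infty}(M)$: the identity $d\alpha(f)\,\alpha(g)=\alpha(g)\,d\alpha(f)$ is an equality of $\cQ$-valued one-forms, so pairing the one-form leg with $X$ and evaluating at $m$ gives precisely $B_{X,f}A_g=A_gB_{X,f}$.

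The remaining $B$--$B$ case is the hard one, and here I would differentiate \Cref{eq:dfg}. Working in a coordinate chart around $m$ with coordinate fields $\partial_i$, apply the derivation $\partial_j$ to the (contracted) function identity $(\partial_i\alpha(f))\,\alpha(g)=\alpha(g)\,(\partial_i\alpha(f))$; using the symmetry $\partial_i\partial_j\alpha(f)=\partial_j\partial_i\alpha(f)$ of mixed partials of the $\cQ$-valued function $\alpha(f)$, the second-derivative terms cancel after evaluating at $m$, leaving
\[
[\,\partial_i\alpha(f)(m),\ \partial_j\alpha(g)(m)\,] = [\,\partial_j\alpha(f)(m),\ \partial_i\alpha(g)(m)\,],
\]
i.e.\ these commutators are symmetric in the indices. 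Specializing $g=f$ collapses this to $[\,\partial_i\alpha(f)(m),\partial_j\alpha(f)(m)\,]=0$, and polarizing $f\mapsto f+g$ then gives
\[
[\,\partial_i\alpha(f)(m),\ \partial_j\alpha(g)(m)\,] + [\,\partial_i\alpha(g)(m),\ \partial_j\alpha(f)(m)\,] = 0,
\]
antisymmetry in the two functions.

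The main obstacle is that these formal identities only determine the index-symmetric and function-antisymmetric parts of the $B$--$B$ commutators; they do not by themselves force the commutators to vanish, and one checks that differentiating \Cref{eq:dfg} further only reproduces the same relations. This is exactly the point at which I would invoke the argument of \cite[Theorem 3.5]{gafa} that was already used to finish the proof of \Cref{th.prsrv}: that argument exploits the unitary implementation of $\alpha$ together with the invariant Riemannian structure, rather than the formal relation \Cref{eq:dfg} alone, and supplies the complementary symmetry needed to conclude that the $B$-type generators commute. Granting it, all pairs of generators commute and each $\cQ'_m$ is commutative, which is the assertion of the lemma.
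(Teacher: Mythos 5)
Your proposal is correct in substance and is in fact more careful than the paper's own proof, which consists of the single sentence ``this is what \Cref{eq:dfg} says.'' Both arguments reduce to checking commutativity on the generating set, and you are right that \Cref{eq:dfg} literally records only the mixed case $[\,(X\otimes\id)\alpha(f)(m),\ \alpha(g)(m)\,]=0$, the $\alpha(f)(m)$--$\alpha(g)(m)$ case being trivial since $(\mathrm{ev}_m\otimes\id)\circ\alpha$ is a homomorphism out of a commutative algebra. Your observation that differentiating \Cref{eq:dfg} yields only the index-symmetric and function-antisymmetric parts of the derivative--derivative commutators, and hence cannot by itself force them to vanish, is a genuine and worthwhile point: that case is not a formal consequence of \Cref{eq:dfg} and must be imported from the external arguments that the proof of \Cref{th.prsrv} adapts. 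The only inaccuracy is one of attribution: the extra input comes from the adaptation of \cite[Theorem 3.7 and Corollary 3.8]{final} (which exploits the implementing unitary $U$ together with weak smoothness, and whose conclusion is precisely the commutativity of what this paper denotes $\cQ'_m$), rather than from \cite[Theorem 3.5]{gafa}, which is the subsequent metric-averaging step. Read charitably, the paper's appeal to \Cref{eq:dfg} is shorthand for the full package of commutation relations established in the course of proving \Cref{th.prsrv}; granting the same external input, your argument establishes the same statement, and your explicit case analysis has the merit of making visible exactly where that input is needed.
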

\begin{proof}
  Indeed, this is what \Cref{eq:dfg} says. 
\end{proof}

   
We now want to prove the commutativity among higher order partial derivatives. This involves a lift to the cotangent bundle which we can do by following the arguments of \cite[Lemma 3.10]{final} verbatim. However, in order to be able to apply \Cref{metric_pres} to the lift, we must ensure that the corresponding reduced action for the lift is injective. This is equivalent to proving the existence of a faithful positive Borel measure on the sphere bundle of the cotangent space which is preserved by the lifted action. We do this in a few steps.
  
The proof requires some notation. Having fixed an invariant Riemannian metric on $M$, we write $S$ for the unit sphere bundle of the cotangent bundle on $M$:
\begin{equation*}
  \pi:S\subset T^*M\to M. 
\end{equation*}
The typical element of $T^*M$ will be denoted by $(x,\omega)$, where $\omega\in T^*_xM$ is a cotangent vector.

As in \cite[\S 3.3]{final}, for a local chart $U$ on $M$ trivializing $T^*M$ with coordinates $x_1,\cdots,x_n$ we define functions $t^U_j\in C^{\infty}(S)$ by
\begin{equation*}
  t^U_j(x,\omega):=\langle \omega,\omega_j(x)\rangle_x
\end{equation*}
where $\omega_1,\cdots,\omega_n$ is a fixed set of $1$-forms on $M$ orthonormal at every point in $U$ and $\langle-,-\rangle_x$ is the inner product on $T^*_xM$ induced by the Riemannian metric (note the slight abuse of notation: $t^U_j$ depends on the choice of $\omega_j$).

We also define functions $T^U_j\in C^{\infty}(S,\cQ)$ as follows: having extended $\alpha$ to an action $d\alpha$ on the $C^{\infty}(M)$-module of $1$-forms as in \cite[\S 3.2]{gafa} (where that extension is denoted $d\alpha_{(1)}$) and denoting by $\langle\langle-,-\rangle\rangle_x$ the $\cQ$-valued inner product on the Hilbert $\cQ$-module $T^*_xM\otimes \cQ$, we set
\begin{equation*}
  T^U_j(x,\omega):=\langle\langle\omega\otimes 1,d\alpha(\omega_j)(x)\rangle\rangle_x. 
\end{equation*}

As in \cite[\S 3.3]{final}, we construct an action $\beta$ of $\clq$ on $S$ given by
\begin{equation*}
 \beta(f t^U_j)=\alpha(f)T^U_j,\quad  f \in C_c^\infty(U)
\end{equation*}
However, in our case, $\beta$ is only a $C^*$-action, weakly smooth in the sense that 
\begin{equation*}
  \beta(C^{\infty}(S))\subset C^{\infty}(S,\cQ). 
\end{equation*}
Note that we have used the continuity of $\alpha$ in the Fr\'echet topology, which follows from weak smoothness by the closed graph theorem.

\begin{lemma}\label{higher_comm} 
  Assume the hypotheses of \Cref{th.prsrv}.

For any point $x\in M$ and local coordinates $(x_1, \ldots, x_n)$ around $x$, the algebra generated by $\alpha(f)(x), \frac{\partial}{\partial x_{i_1}}\ldots \frac{\partial}{\partial x_{i_k}}\alpha(g)(x)$, where $f,g \in C^\infty(M),$ $k \geq 1$ and $i_j \in \{1,\ldots, n\}$, is commutative.
\end{lemma}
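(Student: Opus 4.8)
\emph{Overall strategy.} The plan is to prove the statement by induction on the total order $k$ of the partial derivatives involved, using the lifted action $\beta$ on the cotangent sphere bundle $S$ to feed each order into the first-order case. The base case $k=1$ is precisely \Cref{le:qm'}, since $\cQ'_x$ is generated by the order-zero data $\alpha(f)(x)$ and the order-one data $(X\otimes\id)\alpha(f)(x)$. For the inductive step the decisive observation is that the generators $T^U_j$ package the first $M$-derivatives of $\alpha$ linearly in the fibre variable: in suitable local coordinates $T^U_j(x,\omega)=\sum_i\omega_i\,\partial_i\alpha(x_j)(x)$, so that applying $\ell\le k-1$ base derivatives on $S$ to $\beta(f\,t^U_j)=\alpha(f)T^U_j$ produces, by the Leibniz rule, the order-$(\ell+1)$ $M$-data in the packaged form $\sum_i\omega_i\,\partial_{\vec m}\partial_i\alpha(x_j)(x)$, while $\alpha(f)(x)=\beta(\pi^*f)(x,\omega)$ already sits inside the order-zero $S$-data. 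Hence every order-$k$ partial derivative of $\alpha$ on $M$ occurs, linearly in $\omega$, inside an order-$(k-1)$ $S$-derivative of $\beta$, and the inductive hypothesis applied to $(S,\beta)$ will make all these packaged expressions commute at each fixed fibre point.

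\emph{The main obstacle.} To invoke the inductive hypothesis on $(S,\beta)$ I must check that $\beta$ satisfies the hypotheses of \Cref{th.prsrv}. Weak smoothness of $\beta$ is already recorded (via the closed graph theorem), so the only thing missing is injectivity of the reduced lift $\beta_r$. As flagged in the text preceding the statement, this is the heart of the matter, and it is equivalent to producing a faithful $\beta$-invariant Borel probability measure $\nu$ on $S$: once $\nu$ is available, the GNS construction recalled in \Cref{subse:cqg} (a faithful invariant state yields a unitary implementation, whence injectivity of the reduced action) applies verbatim to $\beta$. I expect the construction and verification of $\nu$ to be the genuinely hard part of the argument; the remainder follows the lift of \cite[Lemma 3.10]{final} essentially verbatim.

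\emph{Constructing $\nu$.} I would take $\nu$ to be a Liouville-type measure. Injectivity of $\alpha_r$ together with the metrizability of $M$ yields a single faithful $\alpha$-invariant probability measure $\mu$ on $M$ (combine a jointly faithful countable family of invariant states $\overline{\varphi_i}$ into a convex combination, as in \Cref{subse:cqg}). Having fixed the $\alpha$-invariant metric supplied by \Cref{th.prsrv}, each fibre $S_x\subset T^*_xM$ is a Euclidean unit sphere carrying its rotation-invariant probability measure $\sigma_x$, and I set
\begin{equation*}
  \nu(F):=\int_M\left(\int_{S_x}F(x,\omega)\,d\sigma_x(\omega)\right)d\mu(x),\qquad F\in C(S).
\end{equation*}
Faithfulness is immediate from the full support of $\mu$ and of each $\sigma_x$. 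For $\beta$-invariance the key point is that metric preservation, $\langle d\alpha(\omega_i),d\alpha(\omega_j)\rangle_{\cQ}=\delta_{ij}1$, says exactly that the fibre coaction $\beta(t^U_j)=\sum_i t^U_i\,u_{ij}$ is implemented by a $\cQ$-valued orthogonal frame $u=(u_{ij})$; in the Kac-type setting in force here the uniform spherical measure is invariant under such a (quantum) orthogonal coaction, just as the round measure on $S^{n-1}$ is the unique $O(n)$-invariant probability measure. Combined with the $\alpha$-invariance of $\mu$ on the base this gives $(\nu\otimes\id)\beta(F)=\nu(F)1$ on the dense subalgebra of fibrewise polynomials, and hence everywhere. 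Concretely the verification reduces, using the parity of monomials in $\omega$ under $\omega\mapsto-\omega$ and the orthogonality relations for $u$, to checking that $\int_{S_x}$ of a polynomial in the $T^U_j$ equals the scalar $\int_{S_x}$ of the same polynomial in the $t^U_j$.

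\emph{Conclusion.} With $\beta_r$ shown injective, \Cref{th.prsrv} applies to $(S,\beta)$ and the inductive hypothesis (the order-$\le(k-1)$ case of \Cref{higher_comm} for $S$) makes the corresponding algebras on $S$ commutative at every $(x,\omega)$. To finish I would extract the commutators on $M$: for two multi-indices $\vec m,\vec m'$ of length $k-1$ the combinations $\sum_i\omega_i\,\partial_{\vec m}\partial_i\alpha(x_j)(x)$ and $\sum_{i'}\omega_{i'}\,\partial_{\vec m'}\partial_{i'}\alpha(x_{j'})(x)$ are both order-$(k-1)$ $S$-data at the same fibre point, so they commute, giving
\begin{equation*}
  \sum_{i,i'}\omega_i\omega_{i'}\,[\partial_{\vec m}\partial_i\alpha(x_j)(x),\ \partial_{\vec m'}\partial_{i'}\alpha(x_{j'})(x)]=0
\end{equation*}
for all $\omega$; as this is a homogeneous quadratic in $\omega$ vanishing on the unit sphere it vanishes identically, so its symmetrized coefficients vanish, and since coordinate partial derivatives are symmetric in their indices one recovers the vanishing of each individual commutator. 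The same parity/linearity extraction handles commutativity of $\alpha(f)(x)$ with the order-$k$ derivatives. This closes the induction. The one step demanding care in this final passage is the index bookkeeping needed to deduce the individual commutators from the symmetrized relations, but it is routine once the symmetry of the derivatives is used.
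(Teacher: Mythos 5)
Your proposal is correct and follows essentially the same route as the paper: both arguments hinge on showing that the lifted action $\beta$ on the cotangent sphere bundle preserves the faithful product measure $\nu=\mu\times\mu_0$ (invariant base measure times round fibre measure), so that $\beta_r$ is injective, \Cref{th.prsrv} applies to the lift, and the first-order commutativity of \Cref{le:qm'} can be iterated order by order. The only cosmetic difference is in verifying the fibrewise invariance of $\mu_0$: you invoke orthogonality relations for the $\cQ$-valued frame directly, whereas the paper reduces to characters $\gamma$ of the commutative algebras $\cQ'_m$ from \Cref{eq:qm'}, under which $e\mapsto(\gamma(T^U_j(e)))_j$ becomes an honest orthogonal transformation of the fibre sphere --- the same mechanism made pointwise.
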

\begin{proof}
  Let $\mu$ be a faithful Borel measure preserved by $\alpha$. Let $\mu_0$ denote the unique $O(n)$ invariant faithful Borel measure (Lebesgue measure) of $S^{n-1}$ and we have a canonical positive, faithful Borel measure on $S$ which is given by the product measure $\mu \times \mu_0$ on any local trivialization. We call this measure $\nu$ and claim that it is preserved by $\beta$.
  
  Choose and fix any locally trivializing neighborhood $U$ and also a function of the form
  \begin{equation*}
    F(e)=f(\pi(e)) P\left(t^U_{j}(e),j=1,\ldots, n\right),
  \end{equation*}
  where $P$ is some polynomial and $f$ has a compact support within $U$. Let $\chi$ be a smooth function with support in $U$ such that $\chi=1$ on the support of $f$.  Now, fix another trivializing neighborhood $V$.  Note that the integral $\int_{\pi^{-1}(V)} G d\nu=\int_{m \in V} d\mu(m) \left( \int_{\pi^{-1}(m)} G_m d\mu_0 \right),$ where $G_m$ is the restriction of $G \in C(S, \clq)$ to the fibre at $m$ which is homeomorphic to $S^{n-1}$. In particular,
  \begin{equation*}
    \int_{\pi^{-1}(V)} \beta(F) d\nu=\int_{m \in V} \alpha(f)(m) \alpha(\chi)(m) \int_{ e \in \pi^{-1}(m)} P\left( T^U_{j}(e),~j=1,\ldots,n \right) d\mu_0.
  \end{equation*}

Recalling the $*$-algebras $\cQ'_m$ defined by \Cref{eq:qm'}, we claim that
\begin{equation}\label{vol_pres}
  \int_{\pi^{-1}(m)} \gamma \left( \alpha(\chi)(m)P( T^U_{j}(e),j=1, \ldots, n) \right) d\mu_0= \int_{\pi^{-1}(m)} \gamma \left( \alpha(\chi)(m)P( t^{U}_{j}(e),j=1, \ldots, n)\right) d\mu_0,
\end{equation}
for any character $\gamma$ on $\cQ'_m$.

Now, it can be proved along the lines of Lemma 3.11 of \cite{gafa} that either $\gamma(\alpha(\chi)(m))$ is zero or we have
\begin{equation*}
  \sum_j \gamma(T^U_j(e))^2=1,\quad \forall e \in \pi^{-1}(m).
\end{equation*}
In case $\gamma(\alpha(\chi)(m))=0$, the equality (\ref{vol_pres}) is immediate. Otherwise, we observe that
\begin{equation*}
  e\equiv (t^U_1(e), \cdots, t^U_n(e)) \mapsto (\gamma(T^U_1(e)),\cdots, \gamma(T^U_n(e)) )
\end{equation*}
gives an isometric map of the fibre $\pi^{-1}(\{ m\}) \cong S^{n-1}$, hence it must be induced by some orthogonal (linear) map restricted to the sphere.  As $\mu_0$ is invariant under any such orthogonal transformation, we have (\ref{vol_pres}). The commutativity of $\cQ'_m$ (\Cref{le:qm'}) then implies the same relation without $\gamma$, i.e.  for all $m \in V,$
\begin{equation*}
  \int_{\pi^{-1}(m)} \alpha(\chi)(m)P(T^U_{j}(e),j=1, \ldots,n ) d\mu_0=\int_{\pi^{-1}(m)} \alpha(\chi)(m)P( t^{U}_{j}(e),j=1, \ldots, n ) d\mu_0.
\end{equation*}
Now, $\int_{\pi^{-1}(m)} P( t^{U}_{j}(e),j=1, \ldots, n )d\mu_0$ does not depend on $m$ and is equal to $C=\int_{\pi^{-1}(m)} \psi(y)d\mu_0(y)$, where $\psi: \pi^{-1}(m) \raro \bR$ given by
\begin{equation*}
  \psi(y\equiv (y_1,\ldots, y_n))=P(y_{i},i=1,\ldots,n). 
\end{equation*}
This gives
\begin{equation*}
  \int_{\pi^{-1}(V)} \beta(F) d\mu=C\int_V \alpha(f) \alpha(\chi) d\mu= C\int_V \alpha(f) d\mu. 
\end{equation*}
As this is true for every locally trivializing $V$ we get by a partition of unity argument $\int_{S} \beta(F)d\nu=C \int_M \alpha(f)d\mu= C (\int_M f d\mu)1_\clq=(\int F d\nu) 1_\clq$, as $\int F d\nu$ is clearly equal to $C\int_M f d\mu$.

Thus, the lifted action $\beta$ on $S$ remains weakly smooth and $\beta_r$ is injective. We can now follow the iterative arguments of \cite{final} to complete the proof of higher order commutativity.
\end{proof}

The proof of the main theorem of \cite{final} now goes through verbatim to give is the following:

\begin{theorem}\label{main}
  Let $\alpha$ be a weakly smooth faithful action of a CQG $\clq$ on a compact connected smooth manifold $M$. Then $\clq$ must be classical, i.e. isomorphic with $C(G)$ for a compact group $G$ acting smoothly on $M$.
\end{theorem}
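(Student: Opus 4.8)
The plan is to show that $\clq$ is commutative, whence it is the function algebra $C(G)$ of a compact group $G$ and the theorem follows from the standard dictionary between commutative compact quantum groups and compact groups (the resulting $G$-action on $M$ being smooth precisely because $\alpha$ is weakly smooth). Since $\alpha$ is faithful if and only if $\alpha_r$ is, and the canonical dense Hopf $*$-subalgebra $\clq_0$ embeds into $\clq_r$, it is enough to prove that a dense $*$-subalgebra of the (reduced) acting algebra is commutative. I therefore work throughout with the reduced action, which I first arrange to be injective---the weakly-smooth counterpart of the injectivity invoked in \Cref{th.prsrv} and \Cref{higher_comm}, paralleling \cite[Corollary 3.3]{injective} in the smooth case. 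With injectivity in place, \Cref{th.prsrv} supplies an $\alpha$-invariant Riemannian metric $g$ on $M$.

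The first substantive step is to convert the invariant metric into an invariant \emph{operator}. The metric $g$ determines both its Laplace--Beltrami operator $\cL$ and its Riemannian volume, and $\alpha$-invariance of $g$ forces $\alpha$ to intertwine $\cL$, i.e. $\alpha\circ\cL=(\cL\otimes\id)\circ\alpha$ on $C^\infty(M)$; this is the passage from metric-preservation to Laplacian-preservation exploited in \cite{gafa,final}. Ellipticity of $\cL$ on the compact $M$ then decomposes $C(M)$ into finite-dimensional eigenspaces $E_\lambda\subseteq C^\infty(M)$, each invariant in the sense that $\alpha(E_\lambda)\subseteq E_\lambda\otimes_{\mathrm{alg}}\clq$, with $\bigoplus_\lambda E_\lambda$ dense in both $C(M)$ and $C^\infty(M)$. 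Fixing an orthonormal basis $\{e_k\}$ of each $E_\lambda$ and writing $\alpha(e_j)=\sum_k e_k\otimes q_{kj}$, the matrix coefficients $q_{kj}$ lie in $\clq_0$, and by faithfulness the $*$-algebra they generate is dense in $\clq$. It thus suffices to show that all the $q_{kj}$ commute.

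The heart of the argument is to place every $q_{kj}$ inside a single commutative algebra. Fix a base point $x_0\in M$ and let $\cA_{x_0}\subseteq\clq$ be the unital $*$-algebra generated by $\alpha(f)(x_0)$ together with all higher derivatives $\partial_{i_1}\cdots\partial_{i_k}\alpha(g)(x_0)$ for $f,g\in C^\infty(M)$; by \Cref{higher_comm} this algebra is commutative. Differentiating $\alpha(e_j)=\sum_k e_k\otimes q_{kj}$ at $x_0$ shows that every jet $\partial_I\alpha(e_j)(x_0)=\sum_k(\partial_I e_k)(x_0)\,q_{kj}$ is a fixed scalar combination of the $q_{kj}$, the scalars being read off the eigenfunctions. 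Because eigenfunctions of the elliptic operator $\cL$ on the connected manifold $M$ enjoy the strong unique continuation property---one vanishing to infinite order at $x_0$ vanishes identically---the infinite-jet map $E_\lambda\to J^\infty_{x_0}(M)$ is injective, so the scalar matrix $\big((\partial_I e_k)(x_0)\big)_{I,k}$ has rank $\dim E_\lambda$. Inverting it expresses each $q_{kj}$ as a linear combination of jets $\partial_I\alpha(e_j)(x_0)\in\cA_{x_0}$, and hence $q_{kj}\in\cA_{x_0}$ simultaneously for all eigenspaces. Since $\cA_{x_0}$ is commutative, all the $q_{kj}$ commute, the dense $*$-algebra they generate is commutative, and therefore $\clq$ is commutative, as desired.

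The main obstacle is exactly this last passage from pointwise data to the global algebra: a smooth function is not recoverable from its jet at a point, so the commutativity of the jet algebra $\cA_{x_0}$ (\Cref{higher_comm}) does not by itself globalize. What rescues the argument is the special rigidity of Laplace eigenfunctions---their separation by infinite jets at a single point via unique continuation---which is precisely why the full higher-order statement \Cref{higher_comm}, rather than merely the first-order \Cref{le:qm'}, is required. Two further points carry the genuinely new, weakly-smooth content on which ``the proof of \cite{final} goes through verbatim'' rests, and are where I expect the real work: securing injectivity of $\alpha_r$ so that \Cref{th.prsrv} and \Cref{higher_comm} apply, and ensuring that the iterated sphere-bundle lifts underlying \Cref{higher_comm} stay weakly smooth with injective reduced actions---the role played by the invariant measure $\nu=\mu\times\mu_0$ built in the proof of \Cref{higher_comm}.
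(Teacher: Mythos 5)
Your endgame is genuinely different from the paper's and, granting its inputs, would work: the paper does not decompose $C(M)$ into Laplacian eigenspaces at all, but instead lifts the action to the orthonormal frame bundle $O(M)$ and reruns the argument of \cite[Theorem 5.3]{gafa}, feeding \Cref{higher_comm} into steps (i) and (iv) of that proof. Your alternative --- expressing the matrix coefficients $q_{kj}$ as linear combinations of the jets $\partial_I\alpha(e_j)(x_0)$ via strong unique continuation for eigenfunctions on the connected $M$, so that all of them land in the single commutative algebra furnished by \Cref{higher_comm} --- is an attractive shortcut past the frame-bundle machinery, and that portion of your argument is sound.

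There is, however, a genuine gap at the step you treat as routine: the passage from invariance of the metric to the intertwining relation $\alpha\circ\cL=(\cL\otimes\id)\circ\alpha$ on all of $C^\infty(M)$. The paper explicitly identifies this as the point where the argument of \cite{gafa,final} breaks down for merely weakly smooth actions: one can prove commutation with $\cL$ on the Podle\'s subalgebra $\clc_0$, but $\clc_0$ is only norm-dense, not Fr\'echet-dense, in $C^\infty(M)$, hence not a core for $\cL$, and the commutation does not extend. Your eigenspace decomposition needs the intertwining precisely on eigenfunctions, which are smooth but have no reason to lie in $\clc_0$, so the invariance $\alpha(E_\lambda)\subseteq E_\lambda\otimes_{\rm alg}\cQ$ is exactly the unavailable statement rather than a known one. (One could try to salvage it by duality and integration by parts against the Riemannian volume of the invariant metric, but that requires first proving this volume is $\alpha$-invariant, which is again part of the Fr\'echet-density-dependent machinery of \cite{gafa}.) This is why the paper abandons the Laplacian route entirely --- it states that \Cref{higher_comm} is used precisely to \emph{bypass} the isometry/Laplacian arguments of \cite[\S 4]{gafa} --- and concludes via the frame bundle instead. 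Your closing remarks correctly flag injectivity of $\alpha_r$ and the weak smoothness of the sphere-bundle lift as requiring care, but the Laplacian intertwining is the more serious missing piece in the proof as written.
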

\begin{proof}
  By passing to the reduced action $\alpha_r$ we can assume without loss of generality that the action preserves some faithful positive Borel measure on $M$.

  Note that the isometry condition, i.e. commutation with the Laplacian, is not used up to \cite[\S 3]{gafa}, so all those results are valid for a weakly smooth actions. Following the arguments of \cite{gafa} we can prove that a weakly smooth action commutes with the Laplacian on $\clc_0$, but absent Fr{\'e}chet density, it will not be a core for the Laplacian and commutation does not extend to $C^\infty(M)$. Nevertheless, \Cref{higher_comm} already proves the commutativity of higher-order partial derivatives, bypassing the arguments of \cite[\S 4]{gafa} (which used the isometry condition). The proof of \cite[Theorem 5.3]{gafa} then carries through more or less verbatim, as we proceed to sketch briefly.

  Given the smooth action $\alpha$ of $\clq$ on $M$, we choose a Riemannian metric by Corollary \ref{metric_pres} which is preserved by the action.  This implies the commutativity of $\clq'_x$. Using this, we can proceed along the lines of \cite{gafa} to lift the given action to $O(M)$.  Now, by Lemma \ref{higher_comm}, we do have the commutativity of partial derivatives of all orders for the lifted action $\Phi$ needed in steps (i) and (iv) of the proof of Theorem 5.3 of \cite{gafa} and the rest of the arguments of Theorem 5.3 of \cite{gafa} will go through.
\end{proof}

\section{Quantum isometry groups: existence and rigidity for closed manifolds}\label{se.main}


Let $M$ be a smooth compact Riemannian manifold without boundary and $d$ its geodesic distance, as before. If $\alpha$ is an isometric CQG action on $(M,d)$ then it automatically preserves all functions of the form
\begin{equation*}\label{eq:13}
  \psi\circ d\in C(M\times M)
\end{equation*}
for continuous $\psi:\bR\to \bR$. In particular, it will preserve the function $D(-,-)$ defined by \Cref{eq:2}. We write $D_x$, $x\in M$ for the function $D(x,-)$.

\Cref{le.d_x_density} below will implicitly make use of the following observation.

\begin{lemma}\label{le.d_x_density}
  For a compact connected Riemannian manifold $M$ without boundary the algebra generated by $\{ D_x:~x \in M \}$ is Fr\'echet-dense in $C^\infty(M)$.
\end{lemma}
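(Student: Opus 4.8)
The plan is to deduce the statement from Nachbin's approximation theorem (\Cref{th:nachb}): it suffices to show that the unital subalgebra $\cA\subseteq C^\infty(M,\bR)$ generated by $\{D_x:\ x\in M\}$ satisfies the point- and tangent-separation condition \Cref{item:8}, after which \Cref{item:7} gives the desired Fr\'echet density. (Each $D_x$ is a genuine smooth function by \Cref{re.smth_Dy}, and passing to the unital algebra is harmless since the $D_x$ have no common zero.) The feature I would exploit throughout is that, because $\psi$ is the identity on $\left(-\frac\delta2,\frac\delta2\right)$, every generator $D_x$ agrees with $y\mapsto d(x,y)^2$ on the ball $\{d(x,-)<\sqrt{\delta/2}\}$, while it vanishes identically once $d(x,-)\ge\sqrt\delta$; thus the $D_x$ faithfully encode the \emph{local} Riemannian geometry around each base point. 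We may also arrange $\psi$ to be strictly positive on $(0,\delta)$, which I will use below.

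\medskip
\noindent\textbf{Separation of tangent vectors (the crux).} Fix $x\in M$ and a nonzero $v\in T_xM$. Taking $\delta$ small enough to have normal coordinates about $x$, for $y$ close to $x$ the function $D_y$ coincides with $d(y,-)^2$ on a neighborhood of $x$, so I can compute its differential at $x$ from the gradient of the squared distance: by the Gauss lemma,
\begin{equation*}
  d(D_y)_x = 2\,d(x,y)\,\eta_y,
\end{equation*}
where $\eta_y\in T^*_xM$ is the covector metric-dual to the unit tangent at $x$ of the minimizing geodesic running from $y$ to $x$. Writing $y=\exp_x(-sv)$ for small $s>0$, that unit tangent points in the direction of $v$, so $\eta_y$ is the metric dual of a positive multiple of $v$ and $d(D_y)_x(v)$ is a positive multiple of $|v|^2\neq 0$. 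Hence the generator $D_y\in\cA$ witnesses \Cref{item:8} for $v$.

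\medskip
\noindent\textbf{Separation of points.} Let $y_1\ne y_2$. If $d(y_1,y_2)<\sqrt\delta$ then, since $\psi>0$ on $(0,\delta)$,
\begin{equation*}
  D_{y_1}(y_1)=\psi(0)=0 < \psi\bigl(d(y_1,y_2)^2\bigr)=D_{y_1}(y_2),
\end{equation*}
so $D_{y_1}\in\cA$ separates them. If instead $d(y_1,y_2)\ge\sqrt\delta$, invoke the Hopf--Rinow theorem recalled in \Cref{subse.ri} to pick a minimizing geodesic from $y_1$ to $y_2$ and a point $x$ on it at small distance $\varepsilon>0$ from $y_1$; then $D_x(y_1)=\varepsilon^2>0$, while $d(x,y_2)=d(y_1,y_2)-\varepsilon\ge\sqrt\delta$ for suitable $\varepsilon$ forces $D_x(y_2)=0$, so $D_x$ separates $y_1$ and $y_2$ (the borderline value $d(y_1,y_2)=\sqrt\delta$ is handled by choosing $\varepsilon$ generically). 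With both separation properties established, \Cref{th:nachb} applies and yields the claim.

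\medskip
\noindent\textbf{Main obstacle.} The substantive step is the tangent-vector separation: each $D_x$ is radial about its own center and has vanishing differential there, so no single generator sees tangent directions at its base point. The idea that resolves this is to \emph{move the base point}, using $D_y$ with $y\ne x$ to produce, as $y$ ranges over directions near $x$, all radial covectors at $x$ and thereby span $T^*_xM$. The cutoff $\psi$ is what forces the locality considerations and the split into near/far cases for point separation, but Nachbin's theorem is precisely the tool that converts these essentially local checks into global Fr\'echet density; connectedness enters only to guarantee the minimizing geodesics used in the far case.
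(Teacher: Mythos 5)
Your proposal is correct and follows essentially the same route as the paper: both reduce to Nachbin's theorem (\Cref{th:nachb}) and both obtain tangent-vector separation at a point by differentiating $D_y$ for $y$ a nearby point on the geodesic in the given direction $v$, which is exactly the paper's computation $D_x(\exp(tv))=(d(x,y)-t)^2$ with the roles of the two points swapped. The only cosmetic differences are that you verify condition \Cref{item:8} rather than \Cref{item:9} and spell out the point-separation case analysis that the paper dispatches with a one-line appeal to Stone--Weierstrass.
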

\begin{proof}
  We will apply \Cref{th:nachb} by verifying, say, condition \Cref{item:9} in that statement. Since an appeal to Stone-Weierstrass quickly shows that the algebra in question is norm-dense, only the local condition needs verification. That is, if $\clc$ is the linear span of functions of the form $D_x, x \in M$, we have to show that for any point $y \in M$ the space $\{ df|_y, f \in \clc\}$ is $n$-dimensional (where $n=\dim M$). We thus focus on proving this full-dimension claim.

  Suppose there is some $y$ for which
  \begin{equation*}
    \dim \{df|_y,\ f\in \cC\}<n.
  \end{equation*}
  Then there is some unit tangent vector $v\in T_yM$ for which $(df_y, v)=0$ for all $f=D_x$. Now consider the arc-length-parametrized geodesic starting at $y$ with velocity $v$ and let $x$ be a point on it, sufficiently close to $y$ to ensure that some normal coordinate neighborhood \cite[p.145]{ccl} $U$ of $x$ contains $y$ and that
  \begin{equation*}
    D(x,-) = d(x,-)^2
  \end{equation*}
  throughout $U$. 

  If $\exp:T_yM\to M$ is the exponential map, we now have
  \begin{equation*}
    D_x(\exp(tv)) = (d(x,y)-t)^2D(x,y),
  \end{equation*}
  whose derivative at $t=0$ clearly does not vanish. This gives the desired contradiction and finishes the proof.
\end{proof}

\begin{theorem}\label{th.nbdry}
  Let $M$ be a Riemannian closed connected smooth manifold and $d$ the corresponding geodesic metric. Then $QISO(M, d)$ exists and coincides with $C(ISO(M))$ where $ISO(M)$ is the group of Riemannian isometries.
\end{theorem}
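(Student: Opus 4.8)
The plan is to reduce the statement to the rigidity theorem \Cref{main}: I would show that \emph{every} isometric action is automatically weakly smooth, after which \Cref{main} forces it to be classical, and both the existence of $QISO(M,d)$ and its identification with $C(ISO(M))$ follow from the classical theory of isometry groups. To begin the weak-smoothness step, let $\alpha:C(M)\to C(M,\cQ)$ be isometric. Since $\alpha$ preserves $D(-,-)$, the isometry relation of \Cref{def.isometric} (applied to $\psi\circ d$ rather than $d$ itself) gives
\[
  \alpha(D_x)(y)=\kappa\bigl(\alpha(D_y)(x)\bigr),\qquad x,y\in M.
\]
By \Cref{re.smth_Dy} the map $M\ni y\mapsto D_y\in C(M)$ is smooth into the Banach space $C(M)$; composing with the norm-bounded map $\alpha$ and with evaluation at $x$ shows that $y\mapsto\alpha(D_y)(x)\in\cQ$ is smooth. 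As $\cQ$ is of Kac type in the isometric setting, $\kappa$ is bounded, so applying it preserves smoothness, and hence $y\mapsto\alpha(D_x)(y)$ is a smooth $\cQ$-valued function; that is, $\alpha(D_x)\in C^\infty(M,\cQ)$ for every $x$. Because $\alpha$ is an algebra homomorphism and $C^\infty(M,\cQ)$ is an algebra, the whole subalgebra $\cA\subseteq C^\infty(M)$ generated by the $D_x$ is carried into $C^\infty(M,\cQ)$.

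Next I would upgrade this to $\alpha(C^\infty(M))\subseteq C^\infty(M,\cQ)$. By \Cref{le.d_x_density} the subalgebra $\cA$ is Fr\'echet-dense in $C^\infty(M)$, so it suffices to check that $\alpha|_{\cA}$ is continuous for the Fr\'echet topologies. Differentiating a product $\alpha(D_{x_1})\cdots\alpha(D_{x_k})$ by the Leibniz rule and using the locally uniform smooth dependence of $(x,y)\mapsto\alpha(D_x)(y)$ bounds each Fr\'echet seminorm of $\alpha(g)$ by a Fr\'echet seminorm of $g\in\cA$. Continuity together with Fr\'echet-density and completeness of $C^\infty(M,\cQ)$ then extends $\alpha|_{\cA}$ to a Fr\'echet-continuous map into $C^\infty(M,\cQ)$, and this extension agrees with $\alpha$ on $C^\infty(M)$ since both are norm-continuous and coincide on the norm-dense set $\cA$. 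This yields weak smoothness.

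With weak smoothness established, an isometric action is faithful by definition, so \Cref{main} applies and forces $\cQ\cong C(G)$ for a compact group $G$ acting smoothly on $M$; smoothness together with preservation of $d$ means $G$ acts by Riemannian isometries, so $G$ embeds as a closed subgroup of the compact Lie group $ISO(M)$ (Myers--Steenrod). Conversely $ISO(M)$ itself acts smoothly and isometrically, so $C(ISO(M))$ carries a canonical isometric action. Every isometric action therefore factors, via the restriction morphism $C(ISO(M))\twoheadrightarrow C(G)$ dual to the inclusion $G\hookrightarrow ISO(M)$, through this one; since all quantum groups in sight are commutative there is no discrepancy between full and reduced versions, so $C(ISO(M))$ is the required universal object and $QISO(M,d)$ exists and equals $C(ISO(M))$.

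The main obstacle is the weak-smoothness step, and within it specifically the passage from the generators $D_x$ to all of $C^\infty(M)$. Obtaining $\alpha(D_x)\in C^\infty(M,\cQ)$ is a clean consequence of \Cref{re.smth_Dy}, the boundedness of $\alpha$ and of $\kappa$, and the isometry relation; but the Fr\'echet-continuous extension off the dense subalgebra $\cA$ requires the explicit seminorm estimates above rather than a soft closed-graph argument, precisely because $\cA$ is not itself complete (a closed-graph argument would only recover Fr\'echet continuity \emph{after} the inclusion $\alpha(C^\infty(M))\subseteq C^\infty(M,\cQ)$ is already known). Everything after weak smoothness is essentially a black-box application of \Cref{main} together with the classical fact that the isometry group of a compact Riemannian manifold is a compact Lie group.
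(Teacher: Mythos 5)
Your overall reduction, and the first half of the weak-smoothness argument, match the paper: injectivity of $\alpha_r$ from \cite[Proposition 3.10]{Chi15} makes \Cref{th.prsrv} and then \Cref{main} available, and the isometry relation together with \Cref{re.smth_Dy} shows $\alpha(D_x)\in C^{\infty}(M,\cQ)$ exactly as you describe. The genuine gap is in the passage from the generators $D_x$ to all of $C^{\infty}(M)$. Your plan is to prove that $\alpha|_{\cA}$ is Fr\'echet-continuous and then extend by density, but the asserted estimate --- that each Fr\'echet seminorm of $\alpha(g)$ is bounded by a Fr\'echet seminorm of $g$ --- does not follow from applying the Leibniz rule to monomials $\alpha(D_{x_1})\cdots\alpha(D_{x_k})$. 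That computation bounds the seminorms of the image of a \emph{monomial} in terms of the points $x_i$ and the degree $k$, not in terms of seminorms of the product function itself; and a general $g\in\cA$ admits many representations as a linear combination of such monomials, with potentially large coefficients and massive cancellation, so no bound on seminorms of $\alpha(g)$ by seminorms of $g$ is obtained this way. Without that continuity there is nothing to extend, and (as you note yourself) the closed graph theorem is unavailable before weak smoothness is already known. So as written the central step fails.

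The paper sidesteps this entirely via condition \Cref{item:10} of \Cref{th:nachb}: by \Cref{le.d_x_density} one can choose finitely many polynomials $\xi_1,\dots,\xi_k$ in the $D_x$ such that $y\mapsto(\xi_1(y),\dots,\xi_k(y))$ is a smooth embedding of $M$ into $\bR^k$. Any $f\in C^{\infty}(M)$ is then of the form $\tilde f(\xi_1,\dots,\xi_k)$ for a compactly supported smooth $\tilde f$ on a neighbourhood of the image, and since the $\alpha(\xi_i)$ are commuting self-adjoint elements of $C(M,\cQ)$ already known to be smooth, functional calculus gives $\alpha(f)=\tilde f(\alpha(\xi_1),\dots,\alpha(\xi_k))\in C^{\infty}(M,\cQ)$. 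Replacing your density-and-continuity step by this global embedding argument repairs the proof; the rest of what you write (faithfulness, the appeal to \Cref{main}, Myers--Steenrod, and the universality of $C(ISO(M))$) is sound.
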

\begin{proof}
  We denote by $D$ a function $\psi\circ d^2$ as in \Cref{eq:2}, for a bump function $\psi:\bR\to \bR$ equal to $\id$ around $0\in \bR$ and vanishing outside a sufficiently small neighborhood of $0$.

  We know from \cite[Proposition 3.10]{Chi15} that every reduced isometric action is injective, so \Cref{th.prsrv} applies. It is thus enough to prove that any CQG isometric action $\alpha$ on $C(M)$ is weakly smooth.

To see this, recall from \Cref{def.isometric} that the isometric property of the action reads
\begin{equation*}
\alpha(D_x)(y)=\kappa(\alpha(D_y)(x)).  
\end{equation*}
Fixing $x$, we now examine the function
\begin{equation*}
  y\mapsto \alpha(D_x)(y)=\kappa(\alpha(D_y)(x)). 
\end{equation*}
It is the composition between the smooth function
\begin{equation*}
  M\ni y\mapsto D_y\in C(M)
\end{equation*}
(see \Cref{re.smth_Dy}) and the $C^*$-algebra morphism
\begin{equation*}
  C(M)\ni f\mapsto \kappa(\alpha(f)(x)),
\end{equation*}
and hence is itself smooth.


By \Cref{th:nachb}, we can find finitely many polynomials $\xi_i$ in the functions $D_x$ such that
\begin{equation*}
  y \mapsto (\xi_1(y) , \ldots ,\xi_k(y))
\end{equation*}
is a smooth embedding of $M$ as a submanifold in $\bR^k$. Every smooth function $f$ on $M$ can be written as $f=\tilde{f}(\xi_1, \ldots, \xi_k)$ for some function $\tilde{f}$ of $k$ real variables with compact support in some open neighborhood of $M$, so all in all we obtain
  \begin{equation*}
    \alpha(f)=\tilde{f}(\alpha(\xi_1), \ldots, \alpha(\xi_k))
  \end{equation*}
Since we have just argued that $\alpha(\xi_i) \in C(M,\cQ)$ are smooth, so is $\alpha(f)$, finishing the proof. 
\end{proof}

\section{Manifolds with boundary}\label{subse.bdry}

As the title suggests, we will now extend the quantum rigidity result in \Cref{th.nbdry} to the case when $\partial M\ne \emptyset$. To that end, throughout the present subsection $M$ denotes a compact Riemannian manifold with boundary

Consider an action of a compact quantum group $\cQ$ on $M$, with $\cC=C(M)$:
\begin{equation}\label{eq:6}
  \alpha:\cC\to \cC\otimes \cQ. 
\end{equation}
For the actions we are interested in (isometric with respect to the geodesic distance of a Riemannian structure), it will be crucial to know that they leave the boundary invariant, in a sense to be made precise below.

\subsection{Invariant subspaces}\label{subse:inv}

\begin{definition}\label{def.prsv}
Let $Z\subseteq X$ be an inclusion of compact Hausdorff spaces and \Cref{eq:6} an action of a compact quantum group on $\cC=C(X)$. We say that $\alpha$ {\it preserves $Z$} or that $Z$ is {\it $\alpha$-invariant} (or just plain {\it invariant} when $\alpha$ is understood) if we have a factorization 
\begin{equation}\label{eq:7}
  \begin{tikzpicture}[auto,baseline=(current  bounding  box.center)]
    \path[anchor=base] (0,0) node (1) {$\cC$} +(3,.5) node (2) {$\cC\otimes \cQ$} +(6,0) node (3) {$C(Z)\otimes \cQ$} +(3,-.5) node (4) {$C(Z)$};
    \draw[->] (1) to[bend left=6] node[pos=.5,auto] {$\scriptstyle \alpha$} (2);
    \draw[->] (2) to[bend left=6] node[pos=.5,auto] {$\scriptstyle \pi\otimes\id$} (3);
    \draw[->] (1) to[bend right=6] node[pos=.5,auto,swap] {$\scriptstyle \pi$} (4);
    \draw[->] (4) to[bend right=6] node[pos=.5,auto,swap] {$\scriptstyle \beta$} (3);        
  \end{tikzpicture}
\end{equation}
where
\begin{equation*}
  \pi:\cC=C(X)\to C(Z)
\end{equation*}
is restriction.
\end{definition}

Assuming such a factorization does exist, the lower right hand arrow $\beta$ will automatically be an action. Denoting by $J=J_Z\subset C(X)$ the ideal of functions vanishing along $Z$, $\alpha$ restricts to a map
\begin{equation*}
  J\to J\otimes \cQ = C_0(U,\cQ)
\end{equation*}
where $U:=X-Z$ and $C_0$ means functions vanishing at infinity on the locally compact space $U$. We will also need

\begin{definition}\label{def:strprsv}
  In the context of \Cref{def.prsv} $Z$ is {\it strongly $\alpha$-invariant} (or $\alpha$ {\it preserves $Z$ strongly}) if the restriction $J_Z\to J_Z\otimes \cQ$ of $\alpha$ satisfies the density condition \Cref{item:12} in \Cref{def:act} for an action (we say in short that $\alpha$ induces an action of $\cQ$ on $U=X-Z$):
  \begin{equation}\label{eq:zxu}
    \overline{Sp} \ \alpha(J_Z)(1\ot \cQ) = C_0(U,\cQ). 
  \end{equation}
\end{definition}

\begin{remark}\label{re:notauto}
  We do not know whether \Cref{def:strprsv} is redundant, i.e. whether strong preservation is automatic given preservation. In fact, if $\alpha:C(X)\to C(X,\cQ)$ is not injective (a possibility we cannot rule out at the moment) and $Z$ is the spectrum of the proper quotient $\alpha(C(X))$ of $C(X)$, then
  \begin{itemize}
  \item $Z$ is preserved by $\alpha$, but nevertheless
  \item by construction, every element of $J_Z$ is annihilated by $\alpha$. 
  \end{itemize}
\end{remark}

Now let $\alpha$ be an action of $\cQ$ on $X$ as before, and $Z\subseteq X$ an $\alpha$-invariant subspace. We denote the dense $*$-subalgebras resulting from this as recalled in \Cref{subse:cqg} by `$0$' subscripts, as in $\cQ_0$, $C(X)_0$, etc. Our first observation on strong invariance is

\begin{lemma}\label{le:isdense}
  If $Z\subseteq X$ is strongly $\alpha$-invariant then the non-unital $*$-algebra
  \begin{equation*}
    C(X)_0\cap J_Z
  \end{equation*}
  of elements of $C(X)_0$ vanishing along $Z$ is dense in $J_Z$. 
\end{lemma}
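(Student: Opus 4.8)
The plan is to realize $C(X)_0\cap J_Z$ as the spectral subalgebra of the coaction that $\alpha$ restricts to on the ideal $J_Z$, and then to deduce its density from the density condition built into strong invariance. First I would recall the mechanism producing $C(X)_0$ (as in \Cref{se.smth}): for each irreducible unitary representation $\sigma$ of $\cQ$ there is a norm-bounded spectral projection $P_\sigma:C(X)\to C(X)$, obtained by slicing $\alpha$ against matrix coefficients of $\sigma$ via the Haar state, with $C(X)_0=\bigoplus_\sigma P_\sigma(C(X))$ an algebraic direct sum that is norm-dense in $C(X)$. Since $Z$ is $\alpha$-invariant we have $\alpha(J_Z)\subseteq J_Z\otimes\cQ$ (the restriction recorded just after \Cref{def.prsv}), and because each $P_\sigma$ is defined by slicing $\alpha$ it follows at once that $P_\sigma(J_Z)\subseteq J_Z$. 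Idempotency of $P_\sigma$ on its range then gives $P_\sigma(C(X))\cap J_Z=P_\sigma(J_Z)$, and expanding an element of $C(X)_0\cap J_Z$ into its finitely many spectral components (each of which lies in $J_Z$, since $P_\sigma$ preserves $J_Z$) yields
\begin{equation*}
  C(X)_0\cap J_Z=\bigoplus_\sigma P_\sigma(J_Z).
\end{equation*}

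Next I would observe that $\alpha|_{J_Z}:J_Z\to J_Z\otimes\cQ$ is a (non-unital) coaction whose spectral projections are exactly the restrictions $P_\sigma|_{J_Z}$, so that the right-hand side above is precisely the spectral (Podle\'s) subalgebra of this coaction. The crucial point is that the density condition \Cref{eq:zxu} defining strong invariance is word-for-word the Podle\'s density hypothesis for $\alpha|_{J_Z}$. With that hypothesis in hand, the standard spectral-subspace density argument --- the adaptation of \cite{Podles} already invoked in \Cref{se.smth} for the unital action --- shows that $\bigoplus_\sigma P_\sigma(J_Z)$ is norm-dense in $J_Z$, which is exactly the assertion of the lemma.

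The one point requiring care, and the main obstacle, is that $J_Z=C_0(U)$ is non-unital, whereas the Podle\'s density argument is usually phrased for unital $C^*$-algebras. I would handle this by using that $J_Z$ admits an approximate identity as a $C^*$-algebra together with the nondegeneracy of $\alpha|_{J_Z}$ supplied precisely by \Cref{eq:zxu}; these jointly furnish the input that the unit provides in the classical argument. It is worth stressing that strong invariance is genuinely indispensable here and not a mere convenience: in the pathological situation of \Cref{re:notauto}, where $J_Z\subseteq\ker\alpha$, every $P_\sigma(J_Z)$ vanishes while $J_Z$ does not, so the conclusion fails --- consistent with the fact that \Cref{eq:zxu} also fails there.
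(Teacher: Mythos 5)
Your proposal is correct and follows essentially the same route as the paper, which simply says to imitate the usual Podle\'s density argument (or, equivalently, to apply it to the unitization $C_0(U)^+$, which is exactly your fix for the non-unitality issue); your identification $C(X)_0\cap J_Z=\bigoplus_\sigma P_\sigma(J_Z)$ via the invariance $P_\sigma(J_Z)\subseteq J_Z$ is a correct and worthwhile detail that the paper leaves implicit.
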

\begin{proof}
  One simply imitates the usual proof that $C(X)_0\subseteq C(X)$ is dense; see e.g. \cite[Theorem 1.5]{Podles}. Alternatively, we can simply {\it apply} that density result to the $\cQ$-action on the one-point compactification of $U$ induced by $\alpha$; that the map
  \begin{equation*}
    C_0(U)^+\to C_0(U)^+\otimes \cQ
  \end{equation*}
  (where `$+$' superscripts denote unitizations) is indeed an action requires precisely the density condition strong invariance imposes.
\end{proof}

\Cref{le:isdense} will come in handy in the context of ``gluing'' actions along a common subspace of two spaces. The setup is as follows. Let $X_i$, $i=1,2$ be compact Hausdorff spaces equipped with actions
\begin{align*}
  \alpha_i &:C(X_i)\to C(X_i)\otimes \cQ
\end{align*}
by a quantum group $\cQ$ and
\begin{align*}
  \iota_i:Z\to X_i 
\end{align*}
embeddings of compact spaces. We write $X:=X_1\cup_ZX_2$ for the resulting space obtained by gluing $X_i$ along $Z$ via the embeddings $\iota_i$ (though by a slight abuse of notation said embeddings are absent from the notation).

Setting $Y:=X_1\sqcup X_2$, we have a product action
\begin{equation}\label{eq:10}
 \begin{tikzpicture}[auto,baseline=(current  bounding  box.center)]
  \path[anchor=base] (0,0) node (1) {$C(Y)$} +(2,1) node (2) {$C(X_1)\times C(X_2)$} +(8,1) node (3) {$(C(X_1)\otimes \cQ)\times (C(X_2)\otimes \cQ)$} +(12,0) node (4) {$C(Y)\otimes \cQ$.};
  \draw[->] (1) to[bend left=6] node[pos=.5,auto] {$\scriptstyle \cong$} (2);
  \draw[->] (2) to[bend left=6] node[pos=.5,auto] {$\scriptstyle \alpha_1\times\alpha_2$} (3);
  \draw[->] (3) to[bend left=6] node[pos=.5,auto] {$\scriptstyle \cong$} (4);
  \draw[->] (1) to[bend right=6] node[pos=.5,auto,swap] {$\scriptstyle \beta$} (4);    
 \end{tikzpicture}
\end{equation}
Now assume furthermore that $\alpha_i$
\begin{itemize}
\item preserve the subspaces
\begin{equation*}
  \iota_i(Z)\subseteq X_i,\ i=1,2
\end{equation*}
in the sense of \Cref{def.prsv}, i.e. for any $f\in C(X_i)$ the restriction of
\begin{equation*}
  \alpha_i(f)\in C(X_i,\cQ)
\end{equation*}
to $\iota_i(Z)\subseteq X_i$ depends only on the restriction of $f$ to the same subspace.
\item agree on $Z$, in the sense that the actions on $Z$ induced by $\alpha_i$ upon identifying $Z\cong \iota_i(Z)$ coincide. 
\end{itemize}
In this case, if $f_i\in C(X_i)$ have equal restrictions to $Z$ via $\iota_i$ then similarly,
\begin{equation*}
    \alpha_i(f_i)\in C(X_i,\cQ)
\end{equation*}
have equal restrictions to $Z$. But this simply means that with $\beta$ defined as in \Cref{eq:10},
\begin{equation*}
  \beta(f_1,f_2)\in C(X\otimes \cQ)\cong C(X)\otimes \cQ. 
\end{equation*}
Since this holds for arbitrary $(f_1,f_2)\in C(X)$ we have

\begin{proposition}\label{le.glue}
  If actions $\alpha_i$ of $\cQ$ on compact spaces $X_i$ strongly preserve a common subspace $Z\subseteq X_i$ on which they agree, we obtain a natural action $\alpha$ of $\cQ$ on the connected sum $X=X_1\cup_Z X_2$.

  If at least one of the actions $\alpha_i$ is faithful then so is $\alpha$ and if $(\alpha_i)_r$ are injective then so is $\alpha_r$. 
\end{proposition}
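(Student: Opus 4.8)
The plan is to build directly on the discussion immediately preceding the statement, which already exhibits $\alpha$ as the restriction of the product action $\beta$ from \Cref{eq:10} to the subalgebra $C(X)\seq C(Y)$ consisting of pairs $(f_1,f_2)$ with $f_1|_Z=f_2|_Z$. Coassociativity of $\alpha$ is then inherited verbatim from that of $\beta$, hence from that of the $\alpha_i$, since the defining diagram in \Cref{def.CQG_action} for $\alpha$ is just the restriction of the one for $\beta$ along the inclusion $C(X)\hookrightarrow C(Y)$. Consequently the only substantive point in the first assertion is the Podle\'s density condition \Cref{item:12}, namely $\overline{Sp}\,\alpha(C(X))(1\ot\cQ)=C(X)\ot\cQ$, and I expect this to be the main obstacle.

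I would verify density by working along the short exact sequence $0\to J\to C(X)\xrightarrow{\pi}C(Z)\to 0$, where $J=J_Z$ is the ideal of functions vanishing on $Z$. Writing $U_i:=X_i\setminus\iota_i(Z)$ one has $J\cong C_0(U_1)\oplus C_0(U_2)$, and because each $\alpha_i$ preserves $Z$ the restriction of $\alpha$ to $J$ splits as $\alpha_1|_J\oplus\alpha_2|_J$. Strong invariance of $\alpha_i$ is exactly the identity $\overline{Sp}\,\alpha_i(J)(1\ot\cQ)=C_0(U_i,\cQ)$ of \Cref{eq:zxu}, so treating the two summands separately yields $K\supseteq\overline{Sp}\,\alpha(J)(1\ot\cQ)=J\ot\cQ=\ker(\pi\ot\id)$, where I abbreviate $K:=\overline{Sp}\,\alpha(C(X))(1\ot\cQ)$. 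On the quotient side, the induced map $\gamma$ on $C(Z)$ is automatically an action (by the remark following \Cref{def.prsv}, applied to the genuine action $\alpha_1$), so $\overline{Sp}\,\gamma(C(Z))(1\ot\cQ)=C(Z)\ot\cQ$; combined with the intertwining $(\pi\ot\id)\alpha=\gamma\pi$ and surjectivity of $\pi$, this makes $(\pi\ot\id)(K)$ dense in $C(Z)\ot\cQ$. Since $\pi\ot\id$ is an open surjection (open mapping theorem) and $K\supseteq\ker(\pi\ot\id)$, density of $(\pi\ot\id)(K)$ forces $K$ to be dense, hence, being closed, equal to all of $C(X)\ot\cQ$. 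The underlying exactness of $0\to J\ot\cQ\to C(X)\ot\cQ\to C(Z)\ot\cQ\to 0$ is clear from the identification with $\cQ$-valued function algebras. This density verification is the genuine work; the remaining claims are bookkeeping.

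For those, I would first record that each $X_i\seq X$ is itself $\alpha$-invariant with induced action $\alpha_i$: writing $\rho_i:C(X)\to C(X_i)$ for restriction, the description $\alpha=\beta|_{C(X)}$ gives $(\rho_i\ot\id)\alpha=\alpha_i\rho_i$. For faithfulness, given a state $\psi$ on $C(X_1)$ the pullback $\psi\rho_1$ is a state on $C(X)$, and for $f_1\in C(X_1)$, choosing any $f\in C(X)$ with $\rho_1(f)=f_1$ (possible since $\rho_1$ is onto) gives $(\psi\rho_1\ot\id)\alpha(f)=(\psi\ot\id)\alpha_1(f_1)$; thus the generating set for $\alpha$ contains that for $\alpha_1$, and faithfulness of $\alpha_1$ forces faithfulness of $\alpha$. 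For injectivity of $\alpha_r$, the same intertwining passes to the reduced level as $(\rho_i\ot\id)\alpha_r=(\alpha_i)_r\rho_i$, so $\alpha_r(f)=0$ forces $(\alpha_i)_r(\rho_i f)=0$, and injectivity of both $(\alpha_i)_r$ gives $\rho_1 f=\rho_2 f=0$, i.e. $f=0$ on $X=X_1\cup_Z X_2$.
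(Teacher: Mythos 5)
Your proof is correct, and the bookkeeping parts (coassociativity, faithfulness via the containment of slice algebras, injectivity of $\alpha_r$ via restriction to the $X_i$) coincide with what the paper does. Where you genuinely diverge is in the key density step \Cref{eq:wantdense}. The paper argues algebraically: it forms the pullback $C(X)_0$ of the Podle\'s subalgebras $C(X_i)_0\to C(Z)_0$, proves by an explicit $4\varepsilon$-approximation (using \Cref{le:isdense}, i.e.\ strong invariance) that this pullback is norm-dense in $C(X)$, and then obtains Podle\'s density from the bijectivity of $a\otimes b\mapsto a_0\otimes a_1b$ on $C(X)_0\otimes_{\rm alg}\cQ_0$ via the antipode. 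You instead run an extension-theoretic argument along $0\to J\to C(X)\xrightarrow{\pi} C(Z)\to 0$: strong invariance gives $K\supseteq J\otimes\cQ=\ker(\pi\otimes\id)$ summand-wise over $U_1\sqcup U_2$, the induced action on $C(Z)$ (automatically an action, as the paper notes after \Cref{def.prsv}) gives density of $(\pi\otimes\id)(K)$, and the two together force the closed subspace $K$ to be everything. Both routes use strong invariance at exactly the same point; yours avoids the Hopf-algebraic detour and the explicit lifting/approximation bookkeeping, at the cost of invoking exactness of $0\to J\otimes\cQ\to C(X)\otimes\cQ\to C(Z)\otimes\cQ\to 0$ (harmless here, since everything is commutative, or by the identification with $\cQ$-valued function algebras as you say) and the norm-preserving lifting property of $C^*$-quotients. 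One small trade-off: the paper's construction also produces the dense spectral subalgebra $C(X)_0$ on which $\alpha$ is algebraic as a byproduct, and its argument shows that strong invariance is only needed for one of the two $\alpha_i$ (as remarked after the proposition), whereas your summand-wise treatment of $J$ uses it for both; this is consistent with the stated hypotheses, so it is not a gap.
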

\begin{proof}
  The proof of the existence of $\alpha$ is essentially contained in the discussion preceding the statement, with the possible caveat that we have not argued that the density condition \Cref{item:12} in the definition of an action holds:
  \begin{equation}\label{eq:wantdense}
    \overline{Sp} \ \alpha(C(X))(1\ot \cQ) = C(X,\cQ). 
  \end{equation}
  We can see this by working at the purely algebraic level, with the dense subalgebras 
  \begin{equation*}
    C(X)_0\subset C(X) 
  \end{equation*}
  and similarly for the spaces $X_i$ and $Z$ (but not $X$ yet, as we do not know at this stage that $\cQ$ acts on it), and with $\cQ_0\subset \cQ$ in place of $\cQ$. The $\cQ$-equivariant embeddings $Z\subseteq X_i$ induce surjections
  \begin{equation}\label{eq:xitoz}
    C(X_i)_0\to C(Z)_0,
  \end{equation}
  giving us a coaction of the Hopf algebra $\cQ_0$ on the pullback $C(X)_0$ of these surjections in the category of $*$-algebras:
  \begin{equation*}
\begin{tikzpicture}[auto,baseline=(current  bounding  box.center)]
\path[anchor=base] 
(0,0) node (l) {$C(X_1)_0$}
+(2,.5) node (u) {$C(X)_0$}
+(2,-.5) node (d) {$C(Z)_0$}
+(4,0) node (r) {$C(X_2)_0$}
;
\draw[<-] (l) to[bend left=6] node[pos=.5,auto] {$\scriptstyle $} (u);
\draw[->] (u) to[bend left=6] node[pos=.5,auto] {$\scriptstyle $} (r);
\draw[->] (l) to[bend right=6] node[pos=.5,auto,swap] {$\scriptstyle $} (d);
\draw[<-] (d) to[bend right=6] node[pos=.5,auto,swap] {$\scriptstyle $} (r);
\end{tikzpicture}
\end{equation*}

{\bf Claim: The pullback $C(X)_0$ is dense in $C(X)$.} This is where {\it strong} preservation will be needed. Consider an arbitrary element of $C(X)$, i.e. a pair of continuous functions $f_i$ on $X_i$ (respectively) agreeing on $Z$. Approximate $f_2$ arbitrarily well (within $\varepsilon>0$, say) with an element
\begin{equation*}
  f_{2,app}\in C(X_2)_0
\end{equation*}
and restrict the latter to $f_{Z,app}\in C(Z)_0$. In turn, {\it that} function lifts to some element $g_{1,app}\in C(X_1)_0$. On the other hand, because $f_{Z,app}$ and the common restriction
\begin{equation*}
  f_Z:=f_1|_Z = f_2|_Z
\end{equation*}
are within $\varepsilon$, their difference $f_{Z,app}-f_Z$ lifts to a function $h_1$ on $X_1$ of norm $<2\varepsilon$. The sum
\begin{equation*}
  f_1 + h_1\in C(X_1)
\end{equation*}
is $2\varepsilon$-close to $f_1$ and restricts to $f_{Z,app}\in C(Z)_0$ on $Z$. So does $g_{1,app}\in C(X_1)_0$, so the difference
\begin{equation}\label{eq:3dif}
  f_1+h_1-g_{1,app}
\end{equation}
vanishes on $Z$. But then, by the strong preservation of $Z\subseteq X_1$ by $\alpha_1$, \Cref{eq:3dif} is $\varepsilon$-close to some element
\begin{equation*}
  d_{1,app}\in C(X_1)_0\cap C_0(X_1-Z)
\end{equation*}
(i.e. in the dense $*$-subalgebra $C(X_1)_0\subseteq C(X_1)$ and vanishing on $Z$). All in all, $f_1$ is within $2\varepsilon$ of $f_1+h_1$, which in turn is within $2\varepsilon$ of
\begin{equation}\label{eq:g+d}
  g_{1,app}+d_{1,app}\in C(X_1)_0. 
\end{equation}
Furthermore, because $d_{1,app}$ vanishes on $Z$, \Cref{eq:g+d} agrees with $f_{2,app}$ along $Z$. This finishes the proof of the claim: the arbitrary pair of functions $f_i\in X_i$ agreeing on $Z$ has been $4\varepsilon$-approximated by a pair of functions $f_{i,app}\in C(X_i)_0$ agreeing on $Z$. 
  
  With the claim proven \Cref{eq:wantdense} follows, since in general, at the level of Hopf algebra coactions
  \begin{equation*}
    \cA\ni a\longmapsto a_0\otimes a_1\in \cA\otimes_{\rm alg}\cQ_0
  \end{equation*}
  the bijectivity of
  \begin{equation*}
    \cA\otimes_{\rm alg}\cQ_0\ni a\otimes b\longmapsto a_0\otimes a_1b\in \cA\otimes_{\rm alg}\cQ_0
  \end{equation*}
  follows from the existence of the antipode $\kappa$ on $\cQ_0$: the inverse is simply
  \begin{equation*}
    \cA\otimes_{\rm alg}\cQ_0\ni a\otimes b\longmapsto a_0\otimes \kappa(a_1)b\in \cA\otimes_{\rm alg}\cQ_0. 
  \end{equation*}

  For the faithfulness claim, note that for points $x_i\in X_i$ we have
  \begin{equation*}
    \cQ_{x_i}^{\alpha_i} = \cQ_{x_i}^{\alpha}\subseteq \cQ. 
  \end{equation*}
  Since we are assuming these algebras generate $\cQ$ as $x_i$ ranges over $X_i$ for at least one of the indices $i=1,2$, the slice algebras $\cQ_{x}^{\alpha}$ do indeed generate $\cQ$ as $x\in X=X_1\cup_Z X_2$.

  Finally, suppose $(\alpha_i)_r$ are injective. Since every non-zero function $f\in C(X)$ restricts to a non-zero function on at least one $X_i$ and both $X_i$ are preserved by $\alpha_r$ which induces back the actions
  \begin{equation*}
    (\alpha_i)_r:C(X_i)\to C(X_i,\cQ),
  \end{equation*}
  we have $\alpha_r(f)\ne 0$, as desired. 
\end{proof}

\begin{remark}
  Although we do not use this, note that in fact the proof of \Cref{le.glue} shows that it is enough to assume {\it one} of the actions $\alpha_i$ preserves $Z$ strongly.
\end{remark}

\subsection{Back to manifolds}\label{subse:back}

We begin with precisely the boundary-invariance result alluded to at the beginning of \Cref{subse.bdry}.

\begin{proposition}\label{pr.bdry-inv}
  Let $M$ be a compact Riemannian manifold and $d$ its geodesic distance. Then, every reduced isometric CQG action on $(M,d)$ preserves the boundary.
\end{proposition}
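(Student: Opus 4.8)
The plan is to reduce boundary invariance to a statement about measures, and then to detect the boundary through a purely metric condition that the isometric action is forced to respect. Write $J:=J_{\partial M}\subseteq \cC=C(M)$ for the ideal of functions vanishing on $\partial M$. By \Cref{def.prsv} it suffices to show $\alpha(J)\subseteq J\otimes \cQ$, and since the states of $\cQ$ separate its points this is equivalent to showing that for every boundary point $x\in \partial M$ and every state $\phi$ on $\cQ$ the probability measure $\mu_{x,\phi}$ on $M$ defined by $\mu_{x,\phi}(f)=\phi(\alpha(f)(x))$ is supported on $\partial M$. (That $\mu_{x,\phi}$ is a state on $\cC$, hence a probability measure, is immediate since $f\mapsto \phi(\alpha(f)(x))$ is positive and unital.) Thus the whole problem becomes: boundary evaluations of $\alpha$ produce measures charging only $\partial M$.

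Next I would isolate a description of the interior $M\setminus\partial M$ that is manifestly preserved by the action. Recall from the start of \Cref{se.main} that an isometric action preserves every function $\psi\circ d\in C(M\times M)$, and so respects all relations among points expressible through the distance. The relevant description is local two-sided geodesic completeness: a point $p$ lies in $M\setminus\partial M$ precisely when every sufficiently short minimizing segment ending at $p$ extends past $p$ to a minimizing geodesic with $p$ in its interior; equivalently, for every $q$ near $p$ there is a $q'$ with $d(p,q')=d(p,q)>0$ and $d(q,q')=d(q,p)+d(p,q')$ (vanishing geodesic excess through $p$). At an interior point the exponential map is a local diffeomorphism of a full ball and this holds, whereas at a boundary point the inward-normal direction admits no such continuation inside $M$ and the condition fails. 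Crucially, this is formulated entirely in terms of $d$ and betweenness, the very data the action preserves.

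The remaining, and hardest, step is to transfer this pointwise characterization to the measures $\mu_{x,\phi}$: one must show that the failure of two-sided completeness at $x\in\partial M$ forces $\mu_{x,\phi}$ off the interior. The difficulty is exactly the non-classical phenomenon distinguishing quantum from classical isometries: $\mu_{x,\phi}$ need not be a point mass, and the various boundary evaluations are coupled through $\cQ$ in a way that is not a product, so a betweenness identity holding pointwise for a genuine isometry must be promoted to a statement about the joint distribution encoded by $\alpha$ on $C(M\times M)$. I expect controlling this ``smearing'' to be the main obstacle, and would attack it by testing the preserved relation $\alpha^{(2)}(\psi\circ d)=(\psi\circ d)\otimes 1$ (where $\alpha^{(2)}$ denotes the induced action on $C(M\times M)$) against these measures and exploiting the rigidity of the zero-excess locus to pin down supports.

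An alternative route, possibly cleaner, mirrors the proof of \Cref{th.nbdry}. Since the reduced isometric action is injective by \cite[Proposition 3.10]{Chi15}, one first checks that it is weakly smooth using \Cref{th:nachb} (whose statement already permits a boundary) together with the smoothness of $y\mapsto D_y$, and then invokes the invariant Riemannian metric produced by \Cref{th.prsrv}. With an invariant Riemannian structure in hand, $\partial M$ becomes a differential-geometric invariant --- for instance the locus where small metric balls exhibit half the interior volume growth, or where local bidirectional geodesic completeness fails --- so its invariance should follow from the invariance of the metric. The price is that the machinery of \Cref{se.smth} was developed for closed manifolds, and surviving the presence of $\partial M$ would itself require a nontrivial verification.
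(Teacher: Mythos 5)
Your first route identifies exactly the right metric invariant --- the paper's proof also detects the boundary through the failure of two-sided geodesic extension, in the concrete form that on a small sphere $S(x,r)$ centered at a boundary point $x$ there is no antipode at distance $2r$ from the point $x'$ reached by travelling inward along the normal geodesic. But you stop precisely where the proof actually lives: you say that transferring this pointwise characterization to the measures $\mu_{x,\phi}$ is ``the main obstacle'' and that you ``would attack it'' by testing $\alpha^{(2)}(\psi\circ d)=(\psi\circ d)\otimes 1$, without carrying this out. That is a genuine gap, not a routine verification. The paper closes it with two specific imported tools: (i) the coupling statement \cite[Theorem 3.1]{Chi15}, which says that for any two points $p,q$ the measures $p\triangleleft\varphi$ and $q\triangleleft\varphi$ are the two marginals of a probability measure on $M\times M$ supported on the locus $\{d(\cdot,\cdot)=d(p,q)\}$; and (ii) the support-reversibility results \cite[Proposition 3.1 and Corollary 3.2]{chirvasitu}, which let one pass from $y'\in\mathrm{supp}(x'\triangleleft\varphi)$ to $x'\in\mathrm{supp}(y'\triangleleft\overline\varphi)$ with $\overline\varphi=\varphi\circ\kappa$. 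The argument then runs \emph{backwards}: assuming an interior point $y$ lies in the orbit of $x\in\partial M$, one takes an antipodal pair $y',y''$ on the interior sphere $S(y,r)$, pushes it through $\overline\varphi$, and the coupling forces a point $x''\in S(x,r)$ with $d(x',x'')=2r$ --- impossible since the true antipode of $x'$ lies outside $M$. Without (i) and (ii), or a worked-out substitute, your measures remain uncontrolled and the proof does not close.

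Your alternative route is worse off: in the paper's logical order, weak smoothness for manifolds with boundary (\Cref{pr.bdry-wsmth}) is itself \emph{deduced from} boundary invariance via \Cref{cor.prsv}, so establishing an invariant Riemannian metric first and then reading off $\partial M$ from it would be circular unless you independently prove weak smoothness in the presence of a boundary --- which is a nontrivial task that the paper only accomplishes after \Cref{pr.bdry-inv} is in hand.
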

\begin{proof}
  We have to argue that if $x\in \partial M$ then the entire $\alpha$-orbit (\cite[\S 3]{chirvasitu}) of $x$ is contained in the boundary. To see this we assume otherwise and derive a contradiction.

  Suppose $y\in \overset{\circ}M=M\setminus \partial M$ is a point in the orbit of $x$ and $\varphi$ is a state on $\cQ$ with $x\triangleleft \varphi = y$. We also denote by $x'\in \overset{\circ}M$ a point placed a small distance $r$ away from $x$, connected to the latter by a geodesic arc $\gamma$ orthogonal to the boundary at $x$.

  The probability measure $x'\triangleleft \varphi$ is supported on the sphere $S(y,r)$ of radius $r$ around $y=x\triangleleft\varphi$ (e.g. by \cite[Theorem 3.1]{Chi15}), and we may assume $r>0$ is small enough that that sphere is entirely within the interior of $M$. Let
  \begin{equation}\label{eq:3}
    y'\in\mathrm{supp}~(x'\triangleleft\varphi )
  \end{equation}
  and denote by $y''\in S(y,r)$ the antipode opposite $y'$, so that
  \begin{equation}\label{eq:4}
    d(y,y') = d(y,y'') = \frac {d(y',y'')}{2} = r. 
  \end{equation}
  Now denote $\overline{\varphi}=\varphi\circ\kappa$. It follows from \Cref{eq:3} and \cite[Proposition 3.1]{chirvasitu} that
  \begin{equation}\label{eq:5}
    x'\in\mathrm{supp}~\left(y'\triangleleft\overline{\varphi} \right)
  \end{equation}
  (and note that we also have $y\triangleleft\overline{\varphi}=x$, by \cite[Corollary 3.2]{chirvasitu}). All in all, $\overline{\varphi}$ maps
  \begin{itemize}
  \item $y\in \overset{\circ} M$ to $x\in \partial M$;
  \item $y'\in S(y,r)$ to a measure whose support contains $x'$ and is contained in $S(x,r)$.
  \item $y''\in S(y,r)$ to a measure supported on the same sphere $S(x,r)$, by \Cref{eq:4}. 
  \end{itemize}
  The last equality in \Cref{eq:4} and \cite[Theorem 3.1]{Chi15} also show that there is a probability measure on $M\times M$, supported on
  \begin{equation*}
    \{(p,q)\in M\times M\ |\ d(p,q)=2r\},
  \end{equation*}
  whose pushforwards through the two projections are $y'\triangleleft\overline{\varphi}$ and $y''\triangleleft\overline{\varphi}$. \Cref{eq:5} now implies that there is some
  \begin{equation*}
    x''\in \mathrm{supp}\left(y''\triangleleft\overline{\varphi}\right)\subseteq S(x,r)
  \end{equation*}
  with $d(x',x'')=2r$. This, however, contradicts the choice of $x'$: since the geodesic arc $\gamma$ connecting $x$ and $x'$ has length $r$ and is orthogonal to $\partial M$ at $x$, the antipode of $S(x,r)\subset N$ opposite $x'$ (for an extension $N\supset M$ as in \Cref{subse.ri}) is not contained in $M$.
\end{proof}

Denote 
\begin{align*}
  \partial_rM        &:=\{x\in M\ |\ d(x,\partial M)=r\}\\
  \partial_{\le r}M  &:=\{x\in M\ |\ d(x,\partial M)\le r\}\label{eq:partials}\numberthis\\
  \partial_{< r}M  &:=\{x\in M\ |\ d(x,\partial M)< r\}
\end{align*}
and similarly for `$\ge $', `$>$', etc. For $r\le s$ set
\begin{equation*}
  \partial_{s\la r}M = \{x\in \partial_rM |\ \exists y\in \partial_s M\text{ such that }d(x,y)=s-r\}.
\end{equation*}

The following result is now an immediate consequence of \Cref{pr.bdry-inv}.

\begin{corollary}\label{cor.prsv}
  Under the hypotheses of \Cref{pr.bdry-inv} the action $\alpha$ preserves the compact sets $\partial_r M$, $\partial_{\ge r}M$ and $\partial_{s\la r}M$ for all real numbers $0\le r\le s$.  \qedhere
\end{corollary}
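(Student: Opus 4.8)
The plan is to recast everything in terms of orbits and reduce all three claims to a single fact: the distance-to-the-boundary function is constant along $\alpha$-orbits. Recall (as exploited throughout the proof of \Cref{pr.bdry-inv}) that a closed set $Z\subseteq M$ is $\alpha$-invariant in the sense of \Cref{def.prsv} precisely when $\mathrm{supp}(x\triangleleft\varphi)\subseteq Z$ for every $x\in Z$ and every state $\varphi\in S(\cQ)$; this follows because $(\mathrm{ev}_x\otimes\id)\alpha(f)=0$ for all $f$ vanishing on $Z$ is detected by the states $\mathrm{ev}_x\otimes\varphi$. Writing $\rho(x):=d(x,\partial M)$, the corollary will follow once I show $\rho(p)=\rho(x)$ for every $p\in\mathrm{supp}(x\triangleleft\varphi)$.

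For the upper bound $\rho(p)\le\rho(x)$ I would pick $b\in\partial M$ realizing $d(x,b)=\rho(x)$ and invoke the isometric-coupling property \cite[Theorem 3.1]{Chi15}: there is a probability measure on $M\times M$ supported on $\{(p',q'):d(p',q')=\rho(x)\}$ with marginals $x\triangleleft\varphi$ and $b\triangleleft\varphi$. Since $b\in\partial M$ and $\partial M$ is invariant by \Cref{pr.bdry-inv}, the second marginal is carried by $\partial M$, so each $p\in\mathrm{supp}(x\triangleleft\varphi)$ is matched (in the limit, using compactness) with some $q\in\partial M$ at distance $\rho(x)$, giving $\rho(p)\le\rho(x)$. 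For the reverse inequality I would use the reversibility of the orbit relation (\cite[Proposition 3.1]{chirvasitu}, \cite[Corollary 3.2]{chirvasitu}): $p\in\mathrm{supp}(x\triangleleft\varphi)$ forces $x\in\mathrm{supp}(p\triangleleft\overline{\varphi})$ with $\overline{\varphi}=\varphi\circ\kappa$, so the upper-bound argument applied to $\overline{\varphi}$ and base point $p$ yields $\rho(x)\le\rho(p)$. Combining the two gives $\rho(p)=\rho(x)$; hence each level set $\partial_rM$ and each superlevel set $\partial_{\ge r}M=\bigcup_{s\ge r}\partial_sM$ is a union of orbits, and therefore invariant.

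For the sets $\partial_{s\la r}M$ I would run a parallel coupling argument while tracking a witness. Fix $x\in\partial_{s\la r}M$, a state $\varphi$, and $p\in\mathrm{supp}(x\triangleleft\varphi)$; by the previous paragraph $p\in\partial_rM$ already. Choose $y\in\partial_sM$ with $d(x,y)=s-r$ and apply \cite[Theorem 3.1]{Chi15} once more to couple $x\triangleleft\varphi$ and $y\triangleleft\varphi$ on the set $\{d=s-r\}$, producing $q\in\mathrm{supp}(y\triangleleft\varphi)$ with $d(p,q)=s-r$. Since $\partial_sM$ is invariant we have $q\in\partial_sM$, so $q$ witnesses $p\in\partial_{s\la r}M$, as required.

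The only genuinely delicate point is the bookkeeping of the two couplings together with the orbit-reversal symmetry (and the routine compactness/limit step converting ``marginal supported on $\partial M$'' into a matched boundary point). Once the equality $\rho(p)=\rho(x)$ on orbits is in place, all three families fall out immediately, which is exactly why the corollary is stated as a direct consequence of \Cref{pr.bdry-inv}; I expect no obstacle beyond citing the coupling and reversibility statements from \cite{Chi15,chirvasitu} in the precise forms above.
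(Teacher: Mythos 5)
Your argument is correct and matches the route the paper intends: the corollary is stated there with no written proof as an ``immediate consequence'' of \Cref{pr.bdry-inv}, and your expansion uses exactly the tools the paper itself deploys in proving that proposition (the coupling statement \cite[Theorem 3.1]{Chi15} and the orbit-reversal facts \cite[Proposition 3.1, Corollary 3.2]{chirvasitu}) to show that $d(-,\partial M)$ is constant on orbits and that membership in $\partial_{s\la r}M$ propagates along them. Nothing is missing; your write-up simply supplies the details the authors chose to omit.
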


This ensures that for each $r\ge 0$ we have an action $\beta$ as in \Cref{eq:7} for $X=\partial_r M$. We will be interested in the following choices of $r$.

\begin{definition}\label{def.tame}
  Let $M$ be a compact Riemannian manifold with boundary. A positive real $r$ is {\it tame} if it is sufficiently small so that $\partial_r M$ is contained in a collar neighborhood of $\partial M$ with a system of coordinates {\it adapted} to the boundary: $x_n$ is distance from $\partial M$ and $x_i$, $1\le i\le n-1$ are coordinates on the boundary extended as constant along geodesic arcs orthogonal to $\partial M$.
\end{definition}

If we knew that the resulting action $\beta$ is faithful we could conclude that the quantum group $\cQ$ is classical by a slight adaptation of \Cref{th.nbdry}. Though this is not quite the strategy we adopt for generalizing \Cref{th.nbdry} to \Cref{th.rig-bdry} below, we nevertheless prove that $\beta$ is faithful for whatever independent interest that result might hold and also because the requisite techniques will be useful later.

According to \Cref{def.CQG_action} an action $\alpha$ is faithful if $\cQ$ is generated as a $C^*$-algebra by the subalgebras
\begin{equation}\label{eq:8}
  \cQ_x = \cQ^{\alpha}_x:=\mathrm{Im} ({\rm ev}_x \ot {\rm id})\circ\alpha. 
\end{equation}

Note that this differs from the algebra denoted by $\cQ_x$ in \cite{final}; indeed, in the present paper the latter algebra would be denoted by $\cQ'_x$ instead.

We need the following notion.

\begin{definition}\label{def.att}
  Consider an action $\alpha$ as in \Cref{eq:6} and $x,y\in M$ two points. We say that $y$ is {\it $\alpha$-attached} to $x$ (or just {\it attached} when the action is understood) if for states $\varphi$ and $\psi$ on $S$ we have
  \begin{equation*}
    x\triangleleft\varphi = x\triangleleft\psi\quad \Rightarrow\quad y\triangleleft\varphi = y\triangleleft\psi. 
  \end{equation*}
\end{definition}

The concept is relevant to faithfulness due to the following result proved in passing in the course of the proof of \cite[Proposition 4.4]{chirvasitu}.

\begin{proposition}\label{pr.att}
  Let $(M,d)$ be a compact metric space, $\alpha$ an isometric action of a compact quantum group $\cQ$ on $M$ and $x,y\in M$. If $y$ is $\alpha$-attached to $x$ then $\cQ_y\subseteq \cQ_x$.  \qedhere
\end{proposition}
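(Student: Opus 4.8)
The plan is to translate the combinatorial-looking attachment hypothesis into a purely functional-analytic statement about the unital $C^*$-subalgebras $\cQ_x,\cQ_y\subseteq\cQ$, and then deduce the inclusion by a Hahn--Banach / Jordan-decomposition argument. First I would record that $\cQ_x=\mathrm{Im}\big((\mathrm{ev}_x\otimes\mathrm{id})\circ\alpha\big)$ is a \emph{unital} $C^*$-subalgebra: the slice map $\mathrm{ev}_x\otimes\mathrm{id}:\cC\otimes\cQ\to\cQ$ is a unital $*$-homomorphism, being the tensor of the character $\mathrm{ev}_x$ with $\mathrm{id}$, and $\alpha$ is a unital $*$-homomorphism, so the composite is one as well and its image is a norm-closed unital $C^*$-subalgebra. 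Next, unwinding \Cref{eq:12}, the state $x\triangleleft\varphi$ on $\cC$ sends $f$ to $\varphi\big((\mathrm{ev}_x\otimes\mathrm{id})\alpha(f)\big)$, and $(\mathrm{ev}_x\otimes\mathrm{id})\alpha(f)$ ranges over all of $\cQ_x$ as $f$ ranges over $\cC$. Hence $x\triangleleft\varphi=x\triangleleft\psi$ if and only if $\varphi$ and $\psi$ agree on $\cQ_x$ (and likewise for $y$), so the attachment hypothesis is precisely the assertion that, for all states $\varphi,\psi$ on $\cQ$,
\begin{equation*}
  \varphi|_{\cQ_x}=\psi|_{\cQ_x}\ \Longrightarrow\ \varphi|_{\cQ_y}=\psi|_{\cQ_y}.
\end{equation*}

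With this reformulation in hand I would prove the following abstract claim: if $A,B$ are unital $C^*$-subalgebras of a unital $C^*$-algebra $\cQ$ and every pair of states agreeing on $A$ also agrees on $B$, then $B\subseteq A$. Arguing by contradiction, suppose some self-adjoint $b\in B$ fails to lie in $A$; it suffices to treat self-adjoint elements, since $A$ and $B$ are spanned by theirs and $A$ is a linear subspace. As $A$ is norm-closed and self-adjoint, Hahn--Banach applied on the real Banach space $\cQ_{sa}$ produces a Hermitian functional $\Lambda$ on $\cQ$ with $\Lambda|_A=0$ and $\Lambda(b)\neq 0$.

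The crux is to convert this annihilating functional into two \emph{states} agreeing on $A$. Here I would invoke the Jordan decomposition $\Lambda=\Lambda_+-\Lambda_-$ into positive functionals with $\|\Lambda\|=\|\Lambda_+\|+\|\Lambda_-\|$. Because $1\in A$ we have $\Lambda(1)=0$, and since $\|\Lambda_\pm\|=\Lambda_\pm(1)$ for positive functionals this forces $\|\Lambda_+\|=\|\Lambda_-\|=:c$; moreover $c>0$, as otherwise $\Lambda_\pm=0$ and $\Lambda=0$. Normalizing, $\varphi:=\Lambda_+/c$ and $\psi:=\Lambda_-/c$ are states with $\varphi|_A=\psi|_A$ (because $\Lambda|_A=0$ gives $\Lambda_+|_A=\Lambda_-|_A$) yet $\varphi(b)-\psi(b)=\Lambda(b)/c\neq 0$, so they disagree on $B$. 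This contradicts the hypothesis, yielding $B\subseteq A$, i.e. $\cQ_y\subseteq\cQ_x$.

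I expect this last conversion to be the only genuinely nontrivial step: passing from a Hermitian functional vanishing on $\cQ_x$ to a pair of honest states agreeing on $\cQ_x$. The norm-additive Jordan decomposition is exactly the right tool, and it is the unitality of $\cQ_x$ — forcing $\Lambda(1)=0$ and hence equal masses $\|\Lambda_+\|=\|\Lambda_-\|$ — that lets both positive parts normalize to states simultaneously. (Note that, perhaps surprisingly, the isometry assumption plays no role in this particular argument; it enters only through the standing conventions of the section.)
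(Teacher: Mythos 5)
Your proof is correct. Note that the paper itself supplies no argument for this proposition (it is stated as having been ``proved in passing'' in \cite[Proposition 4.4]{chirvasitu}), so there is no in-text proof to compare against; your two steps --- (i) the observation that $x\triangleleft\varphi=x\triangleleft\psi$ iff $\varphi$ and $\psi$ agree on the unital $C^*$-subalgebra $\cQ_x$, so that attachment becomes a pure separation statement about $\cQ_y\subseteq\cQ$ versus $\cQ_x\subseteq\cQ$, and (ii) the Hahn--Banach plus norm-additive Jordan decomposition argument, where unitality of $\cQ_x$ forces $\Lambda(1)=0$ and hence $\|\Lambda_+\|=\|\Lambda_-\|$ so that both halves normalize to states simultaneously --- constitute a complete and self-contained justification along the standard lines. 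Your closing remark is also accurate: isometry of $\alpha$ is never used, only that $\cQ_x$ and $\cQ_y$ are unital $C^*$-subalgebras, so the statement holds for arbitrary actions on compact Hausdorff spaces.
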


Going back to the situation at hand, consider the action $\beta$ on $X=\partial_r M$ resulting from $\alpha$ as in \Cref{eq:7}. For $x\in \partial_r M$ the subalgebra $\cQ_x^{\beta}$ defined as in \Cref{eq:8} coincides with $\cQ_x^{\alpha}$. On the other hand, \Cref{pr.att} shows that $\cQ^{\alpha}_y$ is contained in $\cQ^{\alpha}_x$ whenever $y$ is attached to $x$. Since we know (from the faithfulness of $\alpha$) that
\begin{equation*}
  \cQ^{\alpha}_y,\ y\in M
\end{equation*}
generate $\cQ$, we will have shown that $\beta$ is indeed faithful provided we prove

\begin{proposition}\label{le.is-att}
  Let $M$, $\alpha$, etc. be as above, with the additional assumption that every component of $M$ has non-empty boundary. For sufficiently small $r>0$ every point in $M$ is $\alpha$-attached to some $x\in \partial_r M$.
\end{proposition}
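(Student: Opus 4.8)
The plan is to unwind \Cref{def.att} into a determinacy statement and then exploit the rigidity of geodesics normal to $\partial M$. The reformulation I would record first is that, since $x\triangleleft\varphi = x\triangleleft\psi$ means exactly that $\varphi$ and $\psi$ agree on the slice algebra $\cQ_x=\mathrm{Im}(\mathrm{ev}_x\otimes\id)\alpha$, the point $p$ is $\alpha$-attached to $x$ precisely when $p\triangleleft\varphi$ is a fixed function of $x\triangleleft\varphi$; concretely it suffices to produce a continuous pushforward sending $x\triangleleft\varphi$ to $p\triangleleft\varphi$ for every state $\varphi$. I would then fix a tame $r$ (\Cref{def.tame}) and, for arbitrary $p\in M$, choose a nearest boundary point $b\in\partial M$. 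This is where the hypothesis that every component meets $\partial M$ enters: it guarantees $L:=d(p,\partial M)<\infty$ is realized. The minimizing geodesic from $b$ to $p$ meets $\partial M$ orthogonally, so with $\nu(b)$ the inward unit normal we have $p=\exp_b(L\nu(b))$ and $x:=\exp_b(r\nu(b))\in\partial_rM$, the three points $b,x,p$ are collinear along this geodesic, and $b$ is the \emph{unique} nearest boundary point of $x$.

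For points in the collar, i.e. when $L$ is itself tame, the argument is clean and uses only the pairwise couplings of \cite[Theorem 3.1]{Chi15}. By \Cref{cor.prsv} the level set $\partial_LM$ is $\alpha$-invariant, so $p\triangleleft\varphi$ is supported on $\partial_LM$; the coupling of $(p\triangleleft\varphi,\,b\triangleleft\varphi)$ is carried by $\{(q,c):c\in\partial M,\ d(q,c)=L\}$, and for tame $L$ the nearest-point projection $\beta_L:\partial_LM\to\partial M$ is a homeomorphism, so uniqueness of the nearest boundary point forces $c=\beta_L(q)$. Thus the coupling is the graph of $\beta_L$, and $p\triangleleft\varphi$, $b\triangleleft\varphi$ determine each other. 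The identical reasoning for $x\in\partial_rM$ (using $r$ tame) shows $x\triangleleft\varphi$ and $b\triangleleft\varphi$ determine each other, whence $x\triangleleft\varphi$ determines $p\triangleleft\varphi$ via $(\beta_L^{-1}\circ\beta_r)_\ast$ and $p$ is $\alpha$-attached to $x\in\partial_rM$.

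For general $p$ the set $\partial_LM$ need not be a manifold and $b$ need not be the unique nearest boundary point of $p$, so I would keep the factorization through $b\triangleleft\varphi$ but justify the second half differently. The first determination, $x\triangleleft\varphi\mapsto b\triangleleft\varphi$, is exactly as above because it only uses that $x\in\partial_rM$ with $r$ tame. For the second, $p=\exp_b(L\nu(b))$ is a continuous function of $b$, and I would pin $p\triangleleft\varphi$ by requiring the joint law of $(b\triangleleft\varphi,\,x\triangleleft\varphi,\,p\triangleleft\varphi)$ to sit on triples realizing $d(b,x)=r$, $d(x,p)=L-r$ and $d(b,p)=L$ simultaneously: for such a triple $x'$ lies on the unique minimizing (normal) geodesic issuing from $b'$, whose smooth continuation to length $L$ is the single point $\exp_{b'}(L\nu(b'))$, so $p'$ is determined by $b'$ and hence by $x'$.

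The hard part is precisely this last step — producing the collinear triple with all three distances preserved at once. For collar points I sidestep it entirely by reducing to pairwise couplings and the homeomorphisms $\beta_t$, but for points past the focal locus of $\partial M$ I genuinely need that $\alpha$ preserves metric betweenness: that $x$ lying on a minimizing geodesic between $b$ and $p$ forces $x\triangleleft\varphi$ to lie, in the coupled sense, between $b\triangleleft\varphi$ and $p\triangleleft\varphi$. A naive gluing of the pairwise couplings along the deterministic pair $(x,b)$ only yields $d(b',p')\ge L$, not equality, so this betweenness must be extracted from the isometric structure of \cite[Theorem 3.1]{Chi15} rather than assembled formally from the pairwise statements used elsewhere. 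By contrast the surrounding geometry — tameness of $r$, orthogonality of minimizing geodesics to $\partial M$, and uniqueness of their continuations — is standard and I expect it to cause no trouble.
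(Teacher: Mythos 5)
Your collar case is correct and your overall geometric frame (factor through the boundary via nearest-point maps, exploit the rigidity of geodesics normal to $\partial M$) matches the paper's, but the proposal is incomplete at exactly the step you flag yourself: for $p$ beyond the focal locus you need the image measures to respect the collinearity of $b$, $x$, $p$, and you do not produce the three-point coupling that would enforce this. Since such points are precisely the ones not covered by your collar argument, this is a genuine gap, not a deferred technicality. The paper never needs a three-point coupling: it walks outward along the normal geodesic in steps $s\mapsto s'$ whose size is bounded below by an $\varepsilon$ controlled by the injectivity radius on $\partial_{\ge r}M$ (\Cref{le.uniq-rs}). At each step \cite[Theorem 3.1]{Chi15} gives a \emph{pairwise} coupling of $\gamma(s)\triangleleft\varphi$ and $\gamma(s')\triangleleft\varphi$ supported on pairs at distance exactly $s'-s$, and \Cref{le.uniq-rs} (two distinct partners in $\partial_{s'}M$ would force a broken path of length $s'$ to the boundary, which could be shortened) shows each point of $\partial_{s'\la s}M$ has a \emph{unique} such partner; hence the coupling is the graph of $\psi_{s'\la s}$ and each step is a deterministic pushforward. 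Finitely many steps exhaust $[r,\ell]$ and yield \Cref{eq:11}.

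For what it is worth, your own route is one observation short of closing: glue the deterministic coupling $x'\mapsto b'=\beta_r(x')$ onto the distance-$(L-r)$ coupling of $\bigl(x\triangleleft\varphi,\,p\triangleleft\varphi\bigr)$ along the common marginal $x\triangleleft\varphi$. In the glued measure $d(b',p')\le d(b',x')+d(x',p')=L$ by the triangle inequality, while $d(b',p')\ge d(p',\partial M)=L$ because $p'\in\partial_LM$ (\Cref{cor.prsv}) and $b'\in\partial M$ (\Cref{pr.bdry-inv}); equality forces the concatenation $b'\to x'\to p'$ to be an unbroken minimizing geodesic realizing $d(p',\partial M)$, hence normal to $\partial M$ at $b'$, so $p'=\exp_{b'}(L\,\nu(b'))$ (with $\nu$ the inward unit normal) is a function of $b'$ and hence of $x'$. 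The betweenness you were missing is thus supplied by the triangle inequality together with the $\alpha$-invariance of the level sets, not by any new coupling-theoretic input. Note, however, that the paper's iterative scheme is what is reused downstream --- it constructs the maps $\psi_{s\la r}$ of \Cref{eq:psis} and their equivariance (\Cref{cor.pres-trnsl}) --- so patching your argument would not substitute for it elsewhere.
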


We will prove this in a few stages. First, we have

\begin{lemma}\label{le.uniq-rs}
  Let $0<r$. There is some $\varepsilon>0$, depending only on the Riemannian manifold $M$, with the following property: 

  For every $s>r$ with $s-r\le \varepsilon$ and $x\in \partial_{s\la r}M$ the set
  \begin{equation}\label{eq:9}
    \{y\in \partial_s M\ |\ d(x,y)=s-r\}
  \end{equation}
  is a singleton. 
\end{lemma}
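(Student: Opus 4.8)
The plan is to reduce everything to the behavior of the function $\rho:=d(\cdot,\partial M)$, the geodesic distance to the boundary, which is $1$-Lipschitz on all of $M$, satisfies $|\nabla\rho|=1$, and is smooth away from the cut locus of $\partial M$ (equivalently, away from the focal set, once we pass to a closed extension $N\supset M$ as in \Cref{subse.ri} and invoke the standard focal-point theory for the embedded hypersurface $\partial M$). The key point to extract is that a point $x\in\partial_{s\la r}M$ can \emph{never} lie on that cut locus when $s>r$. Granting this, uniqueness is immediate: if $y\in\partial_s M$ has $d(x,y)=s-r$, then along any unit-speed minimizing geodesic $\gamma$ from $x$ to $y$ the $1$-Lipschitz function $\rho$ climbs from $r$ to $s$ over length $s-r$, hence $\rho(\gamma(t))=r+t$ and $\frac{d}{dt}\big|_{t=0^+}\rho(\gamma(t))=1$. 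Since $\rho$ is differentiable at $x$ with $|\nabla\rho(x)|=1$, the Cauchy--Schwarz equality forces $\dot\gamma(0)=\nabla\rho(x)$. As $\nabla\rho(x)$ depends only on $x$, the geodesic $\gamma$, and hence the point $y=\exp_x((s-r)\nabla\rho(x))$, is determined by $x$ alone.

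The one substantive step, then, is to show that $x$ is a regular point of $\rho$. Here I would use a concatenation argument. Since $x\in\partial_{s\la r}M$ there is at least one $y_0\in\partial_s M$ with $d(x,y_0)=s-r$; pick a minimizing geodesic $\gamma_0$ from $x$ to $y_0$ and a minimizing geodesic $\tau$ from $x$ to a nearest boundary point $z\in\partial M$ (of length $\rho(x)=r$), both of which exist by compactness. The concatenation $\bar\tau*\gamma_0$ is a path from $z$ to $y_0$ of length $r+(s-r)=s$; since $d(y_0,z)\ge d(y_0,\partial M)=\rho(y_0)=s$ while the path has length $s$, this path is in fact \emph{minimizing}, so it is a smooth geodesic $\sigma\colon[0,s]\to M$ with no broken corner at $x$ (a corner could be locally shortcut), satisfying $\sigma(0)=z$, $\sigma(r)=x$, $\sigma(s)=y_0$. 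A one-line Lipschitz estimate gives $\rho(\sigma(t))=t$, so $\sigma$ realizes the distance to $\partial M$ all the way out to $t=s$; in particular its focal/cut time is at least $s$. Because $r<s$, the interior point $x=\sigma(r)$ lies strictly before the first focal point along $\sigma$, which is precisely the condition guaranteeing that $x$ is off the cut locus of $\partial M$ and that $\rho$ is smooth at $x$ with $\nabla\rho(x)=\dot\sigma(r)$.

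The main obstacle is packaging the focal-point input cleanly: one needs the classical facts that a minimizing geodesic issuing orthogonally from a hypersurface contains no focal point in its interior, and that the distance to the hypersurface is smooth with unit gradient at points reached before the first focal point. These hold for the compact embedded hypersurface $\partial M\subset N$ and transfer to $M$ because the relevant minimizing geodesics stay in the interior of $M$ for positive time (if $\sigma(t)\in\partial M$ for some $t\in(0,s]$ we would get $\rho(\sigma(t))=0\ne t$). A secondary point to keep honest is the role of $\varepsilon$: the argument above is uniform in $s$ and never uses $s-r$ small, so in fact any fixed $\varepsilon>0$ works (for instance the focal radius of $\partial M$ in $N$); the smallness in the statement is harmless and is retained only because that is the regime in which the lemma is later applied.
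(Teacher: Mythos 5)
Your proof is correct, and its engine is the same as the paper's: the concatenation of a minimizing geodesic from $\partial M$ to $x$ (length $r$) with a minimizing geodesic from $x$ to a point of $\partial_s M$ (length $s-r$) is a path of length $s$ ending at distance $s$ from $\partial M$, hence is minimizing and therefore an unbroken geodesic, since a corner at the interior point $x$ could be locally shortcut. Where you diverge is in how uniqueness is extracted. The paper runs this observation directly on two competing points $y\ne y'$: at most one of the two continuations of the inward arc $\eta$ at $x$ can be corner-free, so one concatenation is strictly shortenable, contradicting $y,y'\in\partial_s M$. You instead run it once, conclude that $x$ lies strictly before the cut time of $\partial M$ along $\sigma$, hence that $\rho=d(\cdot,\partial M)$ is differentiable at $x$ with unit gradient, and then pin down $\dot\gamma(0)=\nabla\rho(x)$ for any competing geodesic via the Lipschitz sandwich and equality in Cauchy--Schwarz. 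This buys a little extra --- the explicit formula $y=\exp_x\bigl((s-r)\nabla\rho(x)\bigr)$, and the correct observation that no smallness of $\varepsilon$ is needed for uniqueness (the paper keeps $\varepsilon$ for later use, when iterating the maps $\psi_{s\la r}$) --- at the cost of importing focal/cut-locus theory for a hypersurface, which the paper's two-line synthetic argument avoids. One small imprecision: lying strictly before the first \emph{focal} point does not by itself place $x$ off the cut locus (one also needs uniqueness of the foot point); what you actually have, and should invoke, is that $x=\sigma(r)$ with $r$ strictly less than the \emph{cut} time, which is at least $s$, and that does yield both the unique foot point and smoothness of $\rho$ at $x$. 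Your care about boundary effects (all relevant minimizing paths stay in the interior for positive time, so they are genuine smooth geodesics) is well placed and is equally needed in the paper's version.
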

\begin{proof}
  Choose $0<\varepsilon<r$ smaller than the injectivity radius of $M$ at every point
  \begin{equation*}
    p\in M,\ d(p,\partial M)\ge r.
  \end{equation*}  
  The very definition of $\partial_{s\la r}M$ says that the set in question is non-empty, so we have to prove that the set \Cref{eq:9} cannot contain distinct points $y\ne y'$.

  Indeed, two such points would entail the existence of two distinct geodesic arcs
  \begin{equation*}
    \gamma:x\to y,\quad \gamma':x\to y'
  \end{equation*}
  of length $s-r$. They cannot both prolong a geodesic arc $\eta$ of length $r$ connecting $x$ to $\partial M$, so one of the concatenations
  \begin{equation*}
    \eta\cdot \gamma,\quad \eta\cdot \gamma'
  \end{equation*}
  is not a geodesic. But both curves have length $r+s-r=s$, meaning that one of the two points $y,y'\in \partial_s M$ can be connected to $\partial M$ by a curve of length $<s$. This contradiction finishes the proof.
\end{proof}

Now let $r>0$. According to \Cref{le.uniq-rs}, for every $s>r$ sufficiently close to $r$ there is a well-defined map
\begin{equation}\label{eq:psis}
  \psi_{s\la r}:\partial_{s\la r} M\to \partial_s M\quad
  \text{such that}\quad
  d(x,\psi_{s\la r}(x)) = s-r.
\end{equation}
Furthermore, uniqueness implies
\begin{itemize}
\item the transitivity of $\psi$:
\begin{equation*}
  \psi_{s_2\la r} = \psi_{s_2\la s_1}\circ \psi_{s_1\la r}
\end{equation*}
for $s_2>s_1>r$ sufficiently close to $r$;
\item the continuity of each $\psi_{s\la r}$: if $x_n\to x$ is a convergent sequence in $\partial_{s\la r}M$, then by the continuity of the distance function the limit of every convergent subsequence of $(\psi_{s\la r}(x_n))_n$ is at distance $s-r$ from $x=\lim_n x_n$, so that limit must be $x$. It follows that $(\psi_{s\la r}(x_n))_n$ must be convergent to this common limit point, since it is a sequence in a compact metric space. 
\end{itemize}

By transitivity, we can define $\psi_{s\la r}$ for arbitrary
\begin{equation*}
  r\le s<\max_p d(p,\partial M). 
\end{equation*}

\pf{le.is-att}
\begin{le.is-att}
  Of course, it suffices to argue that points $y$ in the interior $\overset{\circ} M$ are attached to points on the boundary.

  Let $\ell=d(y,\partial M)$ and $\gamma$ a shortest geodesic, parametrized by arclength, connecting some point $\gamma(0)=x\in \partial M$ to $\gamma(\ell)=y$ (the existence of such a geodesic requires our assumption that all connected components have boundary). Note that we have
  \begin{equation*}
    \gamma(t)\in \partial_t M,\ \forall t\in [0,\ell]. 
  \end{equation*}
  Now let $\varphi$ be a state on $\cQ$ and $r>0$ tame for $M$ in the sense of \Cref{def.tame}. As in the proof of \Cref{pr.bdry-inv}, we can conclude from \cite[Theorem 3.1]{Chi15} that for every $\ell\ge s>r$ the measures
  \begin{equation*}
    \gamma(r)\triangleleft\varphi\in\mathrm{Prob}(\partial_r M),\quad \gamma(s)\triangleleft\varphi\in\mathrm{Prob}(\partial_s M)
  \end{equation*}
  are the marginals of a probability measure on $M\times M$ supported on
  \begin{equation*}
    \{(p,q)\in M\times M\ |\ d(p,q) = d(\gamma(r),\gamma(s))=s-r\}.
  \end{equation*}
It follows that $\gamma(r)\triangleleft\varphi$ is in fact supported on $\partial_{s\la r}M$. For $s$ sufficiently close to $r$ the uniqueness (\Cref{le.uniq-rs}), for every point in $\partial_{s\la r}M$, of a point in $\partial_sM$ that is $s-r$ away from it then implies that we have 
\begin{equation*}
  \gamma(s)\triangleleft\varphi = (\psi_{s\la r})_*(\gamma(r)\triangleleft\varphi). 
\end{equation*}
We can now repeat the procedure with $s$ in place of $r$ and $s'\in (s,\ell]$. \Cref{le.uniq-rs} ensures that we can choose the differences $s'-s$ to be bounded below by some $\varepsilon>0$ and hence eventually exhaust the interval $[r,\ell]$. All in all, the conclusion will be that  
\begin{equation}\label{eq:11}
  \gamma(\ell)\triangleleft\varphi = (\psi_{\ell\la r})_*(\gamma(r)\triangleleft\varphi),\quad \forall \text{ states $\varphi$ on }\cQ.  
\end{equation}
But this says that the image of $y=\gamma(\ell)$ through $\triangleleft\varphi$ depends only on the image of $x$ through $\varphi$; since the state $\varphi$ on $\cQ$ was arbitrary, this finishes the proof that $y$ is attached to $\gamma(r)\in \partial_rM$.
\end{le.is-att}

As a consequence of \Cref{le.is-att} we have

\begin{corollary}\label{cor.faith}
  Let $\alpha$ be an isometric faithful action of a compact quantum group $\cQ$ on a compact Riemannian manifold $M$, all of whose connected components have non-empty boundary.
  
Then, the actions induced by $\alpha$ on any of the sets $\partial_rM$ for sufficiently small $r>0$ are faithful.
\end{corollary}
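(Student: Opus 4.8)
The plan is to assemble the three ingredients developed just above the statement, so that faithfulness of the induced action $\beta$ on $X=\partial_r M$ becomes a bookkeeping exercise. Recall that $\beta$ is faithful precisely when the slice algebras $\cQ_x^\beta$, as $x$ ranges over $\partial_r M$, generate $\cQ$ as a $C^*$-algebra (in the sense of \Cref{eq:8}, applied to $\beta$ on $C(\partial_r M)$).

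First I would record the identification $\cQ_x^\beta=\cQ_x^\alpha$ for every $x\in\partial_r M$. This is what the discussion preceding \Cref{le.is-att} asserts, and it follows directly from the commuting square \Cref{eq:7} defining $\beta$, namely $(\pi\otimes\id)\circ\alpha=\beta\circ\pi$ with $\pi:C(M)\to C(\partial_r M)$ the (surjective) restriction. Indeed, for $x\in\partial_r M$ the evaluation on $C(M)$ factors as $\mathrm{ev}_x\circ\pi$ through the evaluation on $C(\partial_r M)$, so that $(\mathrm{ev}_x\otimes\id)\circ\alpha=(\mathrm{ev}_x\otimes\id)\circ\beta\circ\pi$; since $\pi$ is onto, the two maps have the same image, giving $\cQ_x^\alpha=\cQ_x^\beta$.

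Next I would invoke \Cref{le.is-att}: for $r>0$ sufficiently small every $y\in M$ is $\alpha$-attached to some point $x\in\partial_r M$. By \Cref{pr.att} this yields $\cQ_y^\alpha\subseteq\cQ_x^\alpha$ for that $x$. Hence the family $\{\cQ_x^\alpha:x\in\partial_r M\}$ already contains every slice algebra $\cQ_y^\alpha$ with $y\in M$, and a fortiori the $C^*$-subalgebra it generates does. Faithfulness of $\alpha$ means exactly that $\{\cQ_y^\alpha:y\in M\}$ generates $\cQ$, so the subfamily indexed by $\partial_r M$ generates $\cQ$ as well. Combining this with the identification from the previous step, $\{\cQ_x^\beta:x\in\partial_r M\}=\{\cQ_x^\alpha:x\in\partial_r M\}$ generates $\cQ$, which is precisely faithfulness of $\beta$.

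Since the substantive content lives in \Cref{le.is-att} and \Cref{pr.att}, which are available, there is no real obstacle here. The only point deserving attention is the identification $\cQ_x^\beta=\cQ_x^\alpha$ (resting on surjectivity of the restriction $\pi$), for it is this equality that lets the slice algebras indexed by the lower-dimensional set $\partial_r M$ exhaust the whole of $\cQ$ even though $\beta$ sees only the collar; everything else is a direct consequence of collating the cited results.
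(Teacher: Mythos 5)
Your proposal is correct and follows exactly the paper's route: the identification $\cQ_x^{\beta}=\cQ_x^{\alpha}$ for $x\in\partial_rM$ (stated in the discussion preceding \Cref{le.is-att}), combined with \Cref{pr.att} and \Cref{le.is-att} to show the boundary-collar slices already generate $\cQ$. Your added justification of $\cQ_x^{\beta}=\cQ_x^{\alpha}$ via surjectivity of the restriction $\pi$ is a correct fleshing-out of a point the paper leaves implicit.
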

\begin{proof}
  This follows from \Cref{pr.att,le.is-att}, which show jointly that every slice $\cQ_y$, $x\in M$ is contained in some other slice $\cQ_x$, $x\in \partial_rM$. Since
  \begin{equation*}
    \cQ_y,\ y\in M
  \end{equation*}
  generate $\cQ$, so do the subalgebras $\cQ_x\subseteq \cQ$, $x\in \partial_rM$, finishing the proof.
\end{proof}

We also record the following consequence of the proof of \Cref{le.is-att}:

\begin{corollary}\label{cor.pres-trnsl}
  If $r\ge 0$ is sufficiently small and $s\ge r$ then the map
  \begin{equation*}
    \psi_{s\la r}:\partial_{s\la r} M\to \partial_s M
  \end{equation*}
  is equivariant for the actions of $\cQ$ on $\partial_{s\la r}M$ and $\partial_sM$ from \Cref{cor.prsv}.  
\end{corollary}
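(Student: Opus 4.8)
The plan is to recast equivariance in terms of the dual action on states and then reuse, essentially verbatim, the coupling argument already carried out in the proof of \Cref{le.is-att}. Writing $\beta_{s\la r}$ and $\beta_s$ for the actions of $\cQ$ on $\partial_{s\la r}M$ and $\partial_s M$ furnished by \Cref{cor.prsv}, the asserted equivariance of $\psi_{s\la r}$ is equivalent to the commutation of the pullback $(\psi_{s\la r})^{*}:C(\partial_s M)\to C(\partial_{s\la r}M)$ with the two coactions. Dualizing, this amounts to the measure-theoretic identity
\begin{equation*}
  (\psi_{s\la r})_*(\nu\triangleleft\varphi) = \big((\psi_{s\la r})_*\nu\big)\triangleleft\varphi
\end{equation*}
for every state $\varphi$ on $\cQ$ and every probability measure $\nu$ on $\partial_{s\la r}M$. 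Since Dirac measures are weak-$*$ dense and both sides are weak-$*$ continuous and affine in $\nu$, it suffices to take $\nu=\delta_x$, i.e.\ to prove
\begin{equation*}
  \psi_{s\la r}(x)\triangleleft\varphi = (\psi_{s\la r})_*(x\triangleleft\varphi),\quad \forall x\in\partial_{s\la r}M,\ \forall\varphi\in S(\cQ).
\end{equation*}

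First I would treat the case $s-r\le\varepsilon$ with $\varepsilon$ as in \Cref{le.uniq-rs}. Fix $x\in\partial_{s\la r}M$ and set $y:=\psi_{s\la r}(x)\in\partial_s M$, so that $d(x,y)=s-r$ by the very definition of $\psi_{s\la r}$. Exactly as in the proof of \Cref{le.is-att}, an appeal to \cite[Theorem 3.1]{Chi15} produces a probability measure on $M\times M$ supported on the distance-$(s-r)$ locus, whose two marginals are $x\triangleleft\varphi$ and $y\triangleleft\varphi$; by \Cref{cor.prsv} these are supported on $\partial_r M$ and $\partial_s M$ respectively. Because every point in the support of $x\triangleleft\varphi$ is thereby joined to a point of $\partial_s M$ at distance $s-r$, that support lies in $\partial_{s\la r}M$, and the uniqueness clause of \Cref{le.uniq-rs} forces the coupling to be concentrated on the graph of $\psi_{s\la r}$. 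Reading off the second marginal yields $y\triangleleft\varphi=(\psi_{s\la r})_*(x\triangleleft\varphi)$, as required. Note that this reasoning nowhere uses that $x$ lies on a minimizing geodesic issuing from $\partial M$; only $d(x,y)=s-r$ is needed, which is precisely what the definition of $\psi_{s\la r}$ supplies.

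Finally I would bootstrap to arbitrary $s\ge r$ using the transitivity $\psi_{s_2\la r}=\psi_{s_2\la s_1}\circ\psi_{s_1\la r}$ recorded after \Cref{le.uniq-rs}. Subdividing $[r,s]$ into steps of length at most $\varepsilon$, each elementary map is equivariant by the previous paragraph, and a composite of equivariant maps between $\cQ$-spaces is again equivariant (since pullback is contravariantly functorial and commutes with each coaction separately); hence so is $\psi_{s\la r}$. The substance of the argument already appears in the proof of \Cref{le.is-att}, so the only genuine work here is the observation that the coupling-and-uniqueness step applies to an \emph{arbitrary} base point $x\in\partial_{s\la r}M$, together with the bookkeeping that converts the pointwise pushforward identity into the diagrammatic statement of equivariance. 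The point demanding the most care is the reduction in the first paragraph: one must check that weak-$*$ density of Dirac measures legitimately upgrades the pointwise identity to equality of the two coactions on all of $C(\partial_s M)$.
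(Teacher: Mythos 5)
Your proposal is correct and is essentially the paper's argument: the paper's proof of \Cref{cor.pres-trnsl} simply cites the identity \Cref{eq:11} established inside the proof of \Cref{le.is-att}, which is exactly the coupling-plus-uniqueness computation you redo (your observation that only $d(x,\psi_{s\la r}(x))=s-r$ is needed, not membership on a geodesic from $\partial M$, is the right justification for why that identity applies to every point of $\partial_{s\la r}M$). The reduction from the pointwise pushforward identity to equivariance of the coactions is the standard fact that states separate elements of $\cQ$, so no gap there.
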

\begin{proof}
  This follows from \Cref{eq:11}.
\end{proof}

Next, we address the smoothness issue for isometric quantum actions on Riemannian manifolds with boundary.

\begin{proposition}\label{pr.bdry-wsmth}
  An isometric action $\alpha$ of a compact quantum group $\cQ$ on a Riemannian manifold $M$ (possibly with boundary) is weakly smooth. 
\end{proposition}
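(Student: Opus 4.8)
The plan is to re-run the strategy of \Cref{th.nbdry} — exhibit finitely many smooth functions whose $\alpha$-images are visibly smooth and which embed $M$, then write an arbitrary smooth $f$ as a smooth function of them — but to do so \emph{locally}, splitting $M$ into a region bounded away from $\partial M$ and a collar of $\partial M$. The reason a local split is forced on us is exactly the point where the boundaryless argument breaks. In \Cref{th.nbdry} everything rested on the cut-off squared distance $D$ of \Cref{eq:2} being jointly smooth on $M\times M$, so that $y\mapsto D_y\in C(M)$ is smooth (\Cref{re.smth_Dy}) and hence $\alpha(D_x)(y)=\kappa(\alpha(D_y)(x))$ is smooth in $y$. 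When $\partial M\neq\emptyset$ this fails: near a concave portion of $\partial M$ an intrinsic minimizing geodesic between two nearby points is pushed off the ambient straight geodesic, and $d(\cdot,\cdot)^2$ acquires a non-smooth locus. I regard this as the main obstacle, and the remedy is to avoid these distance functions entirely in the collar.

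First the interior. Fix $\delta>0$ as in \Cref{eq:2}. For a point $y_0$ with $d(y_0,\partial M)>\delta$ and $x$ ranging over a neighborhood of $y_0$, the cut-off $D_x$ is supported in the interior, is honestly smooth, and $y\mapsto D_y\in C(M)$ is smooth near $y_0$ because the $\delta$-balls involved stay inside $\overset{\circ}M$. Thus $\alpha(D_x)(y)=\kappa(\alpha(D_y)(x))$ is smooth near $y_0$, and the verbatim argument of \Cref{th.nbdry} — local embeddability from \Cref{th:nachb} together with the interior form of \Cref{le.d_x_density}, followed by writing $f=\tilde f(\xi_1,\dots,\xi_k)$ — shows $\alpha(f)$ is smooth at every point lying a definite distance from $\partial M$.

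Next the collar. Choose a tame $r$ (\Cref{def.tame}) with adapted coordinates $(x',t)$ on $\partial_{\le r}M$, where $t=d(\cdot,\partial M)$ and $x'$ parametrizes $\partial M$ pushed inward along normal geodesics. By \Cref{pr.bdry-inv,cor.prsv} the level sets $\partial_tM$ are $\alpha$-invariant, so $t$ is $\alpha$-fixed: $\alpha(\phi(t))=\phi(t)\otimes 1$ for one-variable smooth $\phi$, which is patently smooth. By the equivariance of the normal flow $\psi_{s\la r}$ (\Cref{cor.pres-trnsl}), in these coordinates $\alpha$ commutes with sliding in $t$; identifying every level set with $\partial M$ via the flow, the action on the collar is governed by a \emph{single} action $\beta$ of $\cQ$ on the closed manifold $\partial M$, acting on the $x'$-slot and trivially on $t$, so that $\alpha(F)(x',t)=(\mathrm{ev}_{x'}\otimes\mathrm{id})\beta(F(\cdot,t))$. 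Since $\beta$ is a fixed Fr\'echet-continuous map, $\alpha(F)$ is smooth on the collar as soon as $F$ is smooth and $\beta$ is weakly smooth.

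It remains to prove $\beta$ weakly smooth. Because $\partial M$ is $\alpha$-invariant, the isometry identity of \Cref{def.isometric} restricts to give $\beta(\rho_x)(y)=\kappa(\beta(\rho_y)(x))$, where $\rho:=d|_{\partial M\times\partial M}$ is the restriction of the ambient geodesic distance. The crucial gain is that, unlike $d$ itself, the restricted $\rho$ has $\rho^2$ smooth near the diagonal of $\partial M\times\partial M$: for sufficiently close boundary points the $d$-minimizing curve is a short non-degenerate geodesic (interior-going where $\partial M$ is convex, boundary-following where it is concave), whose squared length depends smoothly on the endpoints. Granting this, the cut-off squared distances $\psi\circ\rho_x^2$ are smooth on $\partial M$, the map $x\mapsto\psi\circ\rho_x^2$ is smooth into $C(\partial M)$, and the whole \Cref{th.nbdry} machine — the analogue of \Cref{le.d_x_density} for $\partial M$, the embedding from \Cref{th:nachb}, and the identity above — yields $\beta(C^\infty(\partial M))\subseteq C^\infty(\partial M,\cQ)$. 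A partition of unity subordinate to the interior region and the collar then glues the two conclusions into weak smoothness of $\alpha$ on all of $M$ (components of $M$ with empty boundary being the closed case of \Cref{th.nbdry}). The step most in need of care is precisely the near-diagonal smoothness of $\rho^2$, i.e.\ confirming that restricting the distance to $\partial M$ genuinely removes the non-smoothness that obstructed the direct approach.
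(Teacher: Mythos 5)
There is a genuine gap, and it sits exactly at the step you flag as ``most in need of care'': the claim that $\rho^2=(d|_{\partial M\times\partial M})^2$ is smooth near the diagonal is false in general. Where the second fundamental form of $\partial M$ changes sign, the $d$-minimizing curve between two nearby boundary points transitions between an interior geodesic chord and a boundary-hugging concatenation, and the two smooth formulas for the squared length agree only to finite order at the transition. A planar model already shows this: for $M=\{y\le x^3\}$ near the origin, $d\bigl((0,0),(b,b^3)\bigr)^2$ equals the squared chord length $b^2+b^6+\cdots$ for $b<0$ but the squared boundary arc length $b^2+\tfrac95 b^6+\cdots$ for $b>0$, so $\rho^2_{(0,0)}$ is not $C^\infty$ at the base point. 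Since \Cref{th:nachb} requires a subalgebra of genuinely smooth functions, your generators $\psi\circ\rho_x^2$ are not admissible at such points, and the weak smoothness of $\beta$ on $\partial M$ --- the load-bearing step of your collar argument --- is not established. (Your interior argument and the reduction of the collar action to a single action on a fixed level set via \Cref{cor.prsv,cor.pres-trnsl} are fine and close in spirit to what the paper does.)

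The paper avoids boundary-restricted distance functions altogether. It first shows $\alpha$ is weakly smooth on the invariant submanifold $\partial_{\ge r}M$, where the relevant cut-off functions $D_x$ are supported well inside $\overset{\circ}{M}$ and hence honestly smooth (an adaptation of \Cref{le.d_x_density} plus \Cref{th:nachb}). It then builds an $\alpha$-equivariant ``collar-squeeze'' diffeomorphism $M\cong \partial_{\ge r}M$ out of the normal flow $\psi_{\bullet\la\bullet}$ (equivariance being \Cref{eq:11}) and transports weak smoothness back to all of $M$ through this diffeomorphism. If you want to salvage your two-chart scheme, the fix is similar in flavor: replace the base $\partial M$ of your collar product by an interior level set $\partial_r M$, where the ambient $d^2$ restricted to the hypersurface \emph{is} smooth near the diagonal; but you must then recheck the tangent-vector separation in \Cref{le.d_x_density} with base points constrained to the hypersurface, which is extra work the paper's transport argument renders unnecessary.
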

\begin{proof}
  The boundary-less case was taken care of in the course of proving \Cref{th.nbdry}, so we focus on the case when $\partial M\ne \emptyset$. We know from \Cref{cor.prsv} that all of the sets described in \Cref{eq:partials} (and the analogues $\partial_{\ge r}M$, etc.) are preserved by $\alpha$. Now fix a small $r>0$. The functions
\begin{equation*}
  D_x\in C^{\infty}(\partial_{\ge r}M),\quad x\in \text{a neighborhood of }\partial_{\ge r}M 
\end{equation*}
are easily seen to satisfy the conclusion of \Cref{le.d_x_density} by a simple adaptation of the proof of that result, so we can conclude as in the proof of \Cref{th.nbdry} that the restriction of $\alpha$ to the invariant submanifold $\partial_{\ge r}M$ is weakly smooth.

Next, for small $r>0$ (small enough so that $2r$ is tame, for instance) consider an automatically-increasing diffeomorphism
\begin{equation*}
  \theta:\bR_{\ge 0}\to \bR_{\ge r}
\end{equation*}
that is the identity on $\bR_{\ge 2r}$. With the help of $\theta$ we can define a ``collar-squeeze'' diffeomorphism
\begin{equation*}
  \theta_{\cat{sq}}:M\cong M_{\ge r},
\end{equation*}
acting as
\begin{equation*}
  \psi_{\theta(s)\la s}:\partial_s M\to \partial_{\theta(s)}M,\ \forall s\in \bR_{\ge 0}
\end{equation*}
for the $\psi_{\bullet\la\bullet}$ maps introduced in \Cref{eq:psis} (so in particular $\theta_{\cat{sq}}$ is the identity on $\partial_{\ge 2r}M$). 

\Cref{cor.prsv} and the $\alpha$-equivariance of the maps $\psi_{\bullet\la\bullet}$ (expressed for instance as \Cref{eq:11}) imply that $\theta_{\cat{sq}}$ is $\alpha$-equivariant. But we have already argued that $\alpha$ is weakly smooth on the image $\partial_{\ge r}$ of $\theta_{\cat{SQ}}$, so since the latter is a diffeomorphism, $\alpha$ must be weakly smooth on the domain $M$ of $\theta_{\cat{SQ}}$ as well. This finishes the proof.   
\end{proof}

Now fix some tame $r>0$, and let $\psi:\bR_{\ge 0}\to \bR$ be a continuous function, constant on $[r,\infty)$. For any $C^*$-algebra $\cQ$ we have a bounded (Banach space) endomorphism
\begin{equation*}
  \cat{sc}_{\psi}:C(M,\cQ)\to C(M,\cQ)
\end{equation*}
that scales a function $f:M\to \cQ$ by $\psi(s)$ along $\partial_s M$. Note that the norm of $\cat{sc}_{\psi}$ is the supremum of $|\psi|$.

Because by \Cref{cor.prsv} $\alpha$ preserves all $\partial_r M$ and the resulting action
\begin{equation*}
  \beta:C(\partial_r M)\to C(\partial_r M, \cQ)
\end{equation*}
is linear, scaling a function $f$ on $M$ along $\partial_rM$ and then applying $\alpha$ results in the scaling of $\alpha(f)$ along $\partial_r M$ by the same amount. In other words, for any $\psi$ as above, $\alpha$ intertwines the two instances of $\cat{sc}_{\psi}$:
\begin{equation}\label{eq:alphaisequiv}
\begin{tikzpicture}[auto,baseline=(current  bounding  box.center)]
\path[anchor=base] 
(0,0) node (l) {$C(M)$}
+(3,.5) node (u) {$C(M)$}
+(3,-.5) node (d) {$C(M,\cQ)$}
+(6,0) node (r) {$C(M,\cQ)$.}
;
\draw[->] (l) to[bend left=6] node[pos=.5,auto] {$\scriptstyle \cat{sc}_{\psi}$} (u);
\draw[->] (u) to[bend left=6] node[pos=.5,auto] {$\scriptstyle \alpha$} (r);
\draw[->] (l) to[bend right=6] node[pos=.5,auto,swap] {$\scriptstyle \alpha$} (d);
\draw[->] (d) to[bend right=6] node[pos=.5,auto,swap] {$\scriptstyle \cat{sc}_{\psi}$} (r);
\end{tikzpicture}
\end{equation}

We are now ready to link the present discussion to the strong-invariance material from \Cref{subse:inv}.

\begin{lemma}\label{le:bdrystrong}
The boundary $\partial M=\partial_0 M$ is in fact {\it strongly} $\alpha$-invariant.   
\end{lemma}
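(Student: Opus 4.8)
The plan is to derive strong invariance of $\partial M$ from the \emph{genuine} density condition already built into $\alpha$ as an action, using the collar-scaling maps $\cat{sc}_{\psi}$ of \Cref{eq:alphaisequiv} to force functions to vanish on the boundary without spoiling density. Concretely, writing $J=J_{\partial M}=C_0(\overset{\circ}M)$ for the ideal of functions vanishing on $\partial M$, I must establish
\[
  \overline{Sp}\ \alpha(J)(1\ot\cQ) = C_0(\overset{\circ}M,\cQ).
\]
The inclusion $\subseteq$ is automatic: boundary invariance (\Cref{pr.bdry-inv}, via \Cref{cor.prsv}) gives the factorization of \Cref{def.prsv}, so $\alpha$ restricts to $J\to J\ot\cQ=C_0(\overset{\circ}M,\cQ)$ as noted after that definition. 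Thus all the work is in the reverse density inclusion.

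First I would reduce the target to a convenient dense subset. Since $C_c(\overset{\circ}M)\ot_{\rm alg}\cQ$ is dense in $C_0(\overset{\circ}M,\cQ)$ and every one of its elements is a finite sum $\sum_j f_j\ot q_j$ with the $f_j$ compactly supported in the interior, each such element has support contained in $\partial_{\ge r_0}M$ for some $r_0>0$ (namely $r_0\le\min_j d(\mathrm{supp}\,f_j,\partial M)$). Hence it suffices to approximate, for arbitrary $r_0>0$, any $g\in C(M,\cQ)$ whose support lies in $\partial_{\ge r_0}M$.

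Next, fix a tame $r$ with $0<r<r_0$ and choose a continuous $\psi:\bR_{\ge 0}\to[0,1]$ with $\psi(0)=0$ and $\psi\equiv 1$ on $[r,\infty)$; this $\psi$ is constant on $[r,\infty)$, so $\cat{sc}_{\psi}$ is defined, has norm $\sup|\psi|=1$, and fixes $g$ because $g$ is supported where $\psi=1$. Using the density condition \Cref{item:12} for the action $\alpha$, approximate $g$ within $\varepsilon$ by a finite sum $\sum_i \alpha(f_i)(1\ot q_i)$ with $f_i\in C(M)$ and $q_i\in\cQ$. Applying $\cat{sc}_{\psi}$ and invoking its commutation with $\alpha$ from \Cref{eq:alphaisequiv} together with the fact that $\cat{sc}_{\psi}$ commutes with right multiplication by the constant-in-$M$ element $1\ot q_i$, I obtain
\[
  \cat{sc}_{\psi}\Big(\sum_i \alpha(f_i)(1\ot q_i)\Big)
  =\sum_i \alpha\big(\cat{sc}_{\psi}(f_i)\big)(1\ot q_i).
\]
Because $\cat{sc}_{\psi}(g)=g$ and $\|\cat{sc}_{\psi}\|=1$, the right-hand side is still within $\varepsilon$ of $g$. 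Crucially $\psi(0)=0$ forces each $\cat{sc}_{\psi}(f_i)$ to vanish on $\partial M$, i.e. $\cat{sc}_{\psi}(f_i)\in J$, so $g$ is $\varepsilon$-approximated by an element of $Sp\ \alpha(J)(1\ot\cQ)$, completing the density inclusion.

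I expect the genuinely substantive points—rather than real obstacles—to be the two commutation properties of $\cat{sc}_{\psi}$ used in the displayed identity. The commutation with $\alpha$ is precisely \Cref{eq:alphaisequiv}, which rests on \Cref{cor.prsv} (preservation of every $\partial_s M$); the commutation with right multiplication by $1\ot q_i$ holds because $\cat{sc}_{\psi}$ merely scales the value at each $m\in\partial_s M$ by the scalar $\psi(s)$, which commutes with the fixed right factor $q_i$. The support and density reductions are routine locally-compact-space facts. The whole point, then, is to recognize that the collar-scaling maps turn the full density condition for $\alpha$ into density for the ideal $J_{\partial M}$, which is exactly what strong invariance demands.
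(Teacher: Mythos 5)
Your proof is correct and follows essentially the same route as the paper's: both hinge on approximating the target function via the density condition for $\alpha$ itself and then applying the collar-scaling map $\cat{sc}_{\psi}$ (with $\psi(0)=0$, $\psi\equiv 1$ off a small collar) together with the equivariance \Cref{eq:alphaisequiv} to push the approximants into $\alpha(J)(1\ot\cQ)$. The only difference is cosmetic: you first reduce to functions supported in $\partial_{\ge r_0}M$ so that $\cat{sc}_{\psi}$ fixes the target exactly, whereas the paper keeps a general $F\in C_0(U,\cQ)$ and lets $r\to 0$ to make $\cat{sc}_{\psi}(F)$ close to $F$.
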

\begin{proof}
  With $X=M$, $Z=\partial M$ and $U=X-Z$ we want to show that the density condition \Cref{eq:zxu} holds. Because $\alpha$ itself is an action, every
  \begin{equation*}
    F\in C_0(U,\cQ)
  \end{equation*}
  (i.e. continuous function $M\to \cQ$ vanishing on the boundary) is approximable arbitrarily well by elements of the form
  \begin{equation*}
    \sum_{i=1}^t \alpha(f_i)(1\otimes x_i),\quad f_i\in C(M),\ x_i\in \cQ.
  \end{equation*}
  Now pick a continuous, non-decreasing $\psi:\bR_{\ge 0}\to \bR_{\ge 0}$ as in the discussion above, which
  \begin{itemize}
  \item equals $1$ for $[r,\infty)$ for some sufficiently small tame $r$ and
  \item vanishes at $0$. 
  \end{itemize}
  Applying the contraction $\cat{sc}_{\psi}$ of $C(M,\cQ)$ to both sides of
  \begin{equation*}
    \sum_{i=1}^t \alpha(f_i)(1\otimes x_i)\quad \simeq_{\varepsilon}\quad F
  \end{equation*}
  will produce
  \begin{itemize}
  \item on the right hand side a function close to $F$ if the $r$ above is sufficiently small, and
  \item on the left hand side
    \begin{equation*}
      \cat{sc}_{\psi}\left(\sum_{i=1}^t \alpha(f_i)(1\otimes x_i)\right) = \sum_{i=1}^t \alpha(\cat{sc}_{\psi}(f_i))(1\otimes x_i),
    \end{equation*}
    using the $\cat{sc}_{\psi}$-equivariance \Cref{eq:alphaisequiv} of $\alpha$.  
  \end{itemize}
  Since $\cat{sc}_{\psi}(f_i)$ belong to $C_0(U)$ (i.e. vanish on $\partial M$) because $\psi$ vanishes at $0$, this finishes the proof.
\end{proof}

It follows from \Cref{le:bdrystrong} that the discussion in \Cref{subse:inv} on gluing actions along common subspaces applies to the case when $X_1=M=X_2$ is a smooth manifold with boundary and
\begin{equation*}
  \iota_i:Z=\partial M\to M
\end{equation*}
are both equal to the inclusion, so that $X=X_1\cup_Z X_2$ is the {\it double} $D(M)$ of $M$ (e.g. \cite[Example 9.32]{lee}). $D(M)$ is a topological boundary-less manifold which can be given a smooth structure compatible with that of (the two copies of) $M$ \cite[Theorem 9.29]{lee}.

The proof of the latter theorem makes it clear that the smooth structure on $D(M)$ depends on a choice of collar neighborhoods of $\partial M$ in the two copies of $M$. For our purposes, we select (on both copies of $M$) a neighborhood adapted to the boundary in the sense of \Cref{def.tame}: one coordinate measures Riemannian distance from the boundary whereas the others are chosen arbitrarily on the boundary and kept constant along geodesics orthogonal to it.

Whenever we refer to $D(M)$ as a smooth manifold we always assume the smooth structure is constructed as described above. Doubling a manifold without boundary simply produces two disjoint copies of it, so that $D(M)$ also contains two copies of each boundary-less component of $M$. Starting with the action $\alpha$ on $M$, we write $\alpha^2$ (``doubled $\alpha$'') for the action on $D(M)$ induced as in \Cref{le.glue}. 

\begin{proposition}\label{th.doubled}
  Let $\alpha$ be an isometric action of $\cQ$ on a Riemannian manifold $M$ with boundary. The doubled action
  \begin{equation*}
    \alpha^2:C(D(M))\to C(D(M),\cQ)
  \end{equation*}
  is weakly smooth and $\alpha^2_r$ is injective. 
\end{proposition}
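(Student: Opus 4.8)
The plan is to treat the two assertions separately, disposing of the injectivity of $\alpha^2_r$ first as the easier half. By construction $\alpha^2$ is the action obtained by gluing two copies of $\alpha$ along $Z=\partial M$ as in \Cref{le.glue}, and the hypotheses of that proposition are in force: the boundary is \emph{strongly} $\alpha$-invariant by \Cref{le:bdrystrong}, and since the two actions being glued are literally the same they trivially agree on $Z$. As $\alpha$ is isometric, its reduced version $\alpha_r$ is injective by \cite[Proposition 3.10]{Chi15} (recalled in \Cref{subse.metric}), so the final clause of \Cref{le.glue} immediately yields that $\alpha^2_r$ is injective.

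The substance of the statement is weak smoothness, and here I would exhibit $\alpha^2$ as a ``product'' action in the adapted collar coordinates used to build the smooth structure on $D(M)$. Away from the gluing locus there is nothing to do: on the interior of either copy $\alpha^2$ restricts to $\alpha$, which is weakly smooth by \Cref{pr.bdry-wsmth}, so $\alpha^2(f)$ is smooth there for every $f\in C^\infty(D(M))$. All the difficulty is concentrated at the seam $\partial M$. There I would trivialize a collar as $\partial M\times(-\varepsilon,\varepsilon)$ with signed normal coordinate $t$, identifying the level set $\partial_{|t|}M$ with $\partial M$ via the maps $\psi_{s\la 0}$ of \Cref{eq:psis} (and their mirror images on the second copy). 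This is precisely the identification underlying the smooth structure on $D(M)$ fixed before the statement — coordinates adapted to the boundary in the sense of \Cref{def.tame} — so near the seam $f\in C^\infty(D(M))$ corresponds to a jointly smooth $\tilde f(p,t)$ on $\partial M\times(-\varepsilon,\varepsilon)$.

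The key point is that in these coordinates $\alpha^2$ is \emph{constant in $t$}. Indeed, by the equivariance of $\psi_{s\la 0}$ (\Cref{cor.pres-trnsl}, equivalently \Cref{eq:11}) the slice action of $\cQ$ on $\partial_{|t|}M$ is conjugate through the trivialization to the single boundary action $\beta$ of \Cref{eq:7}, so that $\widetilde{\alpha^2(f)}(p,t)=\beta\bigl(\tilde f(\,\cdot\,,t)\bigr)(p)$. Now $\beta$ is weakly smooth on the closed manifold $\partial M$: for $g\in C^\infty(\partial M)$ one extends $g$ to some $G\in C^\infty(M)$, applies \Cref{pr.bdry-wsmth} to get $\alpha(G)\in C^\infty(M,\cQ)$, and restricts to $\partial M$. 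Being weakly smooth, $\beta$ is Fr\'echet-continuous as a map $C^\infty(\partial M)\to C^\infty(\partial M,\cQ)$ by the closed graph theorem. Viewing $t\mapsto \tilde f(\,\cdot\,,t)$ as a smooth curve in $C^\infty(\partial M)$ and post-composing with $\beta$ then shows $\widetilde{\alpha^2(f)}$ to be jointly smooth on $\partial M\times(-\varepsilon,\varepsilon)$, in particular across $t=0$. Combined with the off-seam smoothness this gives $\alpha^2(f)\in C^\infty(D(M),\cQ)$.

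The main obstacle, and the step deserving the most care, is the identification of $\alpha^2$ with the $t$-independent action $\beta$ on the collar: it requires checking that the trivialization furnished by the orthogonal-geodesic flow is exactly the one implicit in the chosen smooth structure on $D(M)$, and that the equivariance of \Cref{cor.pres-trnsl} — phrased for the induced maps on level sets and on state spaces — upgrades to the stated identity of $\cQ$-valued $*$-homomorphisms along the collar. Once that product description is secured, the matching of all normal and tangential derivatives across the seam is automatic, rather than something to be verified derivative by derivative.
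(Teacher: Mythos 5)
Your proof is correct, and both halves track the paper's: the injectivity of $\alpha^2_r$ is obtained, exactly as in the paper, from the last clause of \Cref{le.glue} together with \Cref{le:bdrystrong} and the injectivity of $\alpha_r$ for isometric actions. For weak smoothness you and the paper rely on the same key mechanism, namely that \Cref{cor.prsv} and the equivariance of the normal-geodesic maps $\psi_{s\la 0}$ (\Cref{cor.pres-trnsl}, \Cref{eq:11}) make the action ``constant in the normal direction'' in the adapted collar coordinates fixed when building the smooth structure on $D(M)$. Where you diverge is in how joint smoothness across the seam is then extracted. The paper isolates the class of $\psi$-separable functions $f(x_1,\dots,x_{n-1})\psi(x_n)$, shows via \Cref{le:sep} that $\alpha^2$ preserves this class, and concludes by Fr\'echet density of the algebra such functions generate in $C^\infty(D(M))$. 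You instead establish the fiberwise formula $\widetilde{\alpha^2(f)}(p,t)=\beta\bigl(\tilde f(\,\cdot\,,t)\bigr)(p)$ for \emph{every} $f$, deduce weak smoothness of the boundary action $\beta$ by extension and restriction, invoke the closed graph theorem for its Fr\'echet continuity, and finish with the exponential-law identification $C^\infty\bigl((-\varepsilon,\varepsilon),C^\infty(\partial M,\cQ)\bigr)\cong C^\infty(\partial M\times(-\varepsilon,\varepsilon),\cQ)$ (valid here by nuclearity of $C^\infty(\partial M)$). Your route avoids the density argument at the cost of this slightly more sophisticated functional-analytic step; the two are of comparable length and both are complete, and you correctly flag the one point genuinely requiring care, namely that the geodesic-flow trivialization is the one implicit in the chosen smooth structure on $D(M)$.
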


Consider a collar neighborhood $U=\partial_{<r}M$ of $\partial M$ in $M$ (see \Cref{eq:partials} for the notation) with its adapted coordinate system $(x_1,\cdots,x_n)$ in the sense of \Cref{def.tame}, $x_n$ denoting distance from $\partial M$. Let $\psi:[0,r]\to \bR$ be a continuous (typically smooth) function. We call a function on $U$ {\it $\psi$-separable} if it is of the form
\begin{equation}\label{eq:sepmod}
  (x_1,\cdots,x_n)\mapsto f(x_1,\cdots,x_{n-1})\cdot \psi(x_n). 
\end{equation}
The relevance of the notion to the subsequent discussion is captured by

\begin{lemma}\label{le:sep}
  If $f\in C(M)$ is $\psi$-separable on $U$ for some continuous $\psi$ then $\alpha(f)\in C(M,\cQ)$ is again $\psi$-separable.
\end{lemma}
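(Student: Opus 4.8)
The plan is to read off $\psi$-separability slice by slice and then transport it through the $\cQ$-equivariant structure of the collar. Fix a tame collar $U=\partial_{<r}M$ with its adapted coordinates, and recall from \Cref{cor.prsv} that each slice $\partial_t M$ (for $0\le t<r$) is $\alpha$-invariant, so that $\alpha$ descends to an action $\beta_t$ on $C(\partial_t M)$ satisfying $\alpha(f)|_{\partial_t M}=\beta_t(f|_{\partial_t M})$. The foot-point maps $\psi_{t\la 0}\colon\partial M\to\partial_t M$ of \Cref{eq:psis} are homeomorphisms (surjectivity and injectivity come from the tube structure of a tame collar, continuity was checked just after \Cref{le.uniq-rs}) and, crucially, they are $\cQ$-equivariant by \Cref{cor.pres-trnsl} (the $r=0$, $s=t$ instance, with $\partial_{t\la 0}M=\partial M$ in a tame collar). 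In these terms, a function is $\psi$-separable on $U$ exactly when, after pulling each slice back to $\partial M$ via $\psi_{t\la 0}$, its restrictions to the various slices all coincide up to the scalar factor $\psi(t)$.

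Concretely, I would write the given $f$ on $U$ as $f(y)=g(\pi(y))\,\psi(x_n(y))$ with $g\in C(\partial M)$ and $\pi=\psi_{t\la 0}^{-1}$ the foot-point projection on $\partial_t M$, so that $\psi_{t\la 0}^{*}(f|_{\partial_t M})=\psi(t)\,g$ for every $t$. Applying the boundary action $\beta:=\beta_0$ and using the intertwining relation $\beta\circ\psi_{t\la 0}^{*}=(\psi_{t\la 0}^{*}\otimes\id)\circ\beta_t$ supplied by equivariance, one obtains $(\psi_{t\la 0}^{*}\otimes\id)\bigl(\alpha(f)|_{\partial_t M}\bigr)=\psi(t)\,\beta(g)$. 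Since $\psi_{t\la 0}^{*}$ is invertible with inverse $(\pi|_{\partial_t M})^{*}$, this rearranges to $\alpha(f)(y)=\psi(x_n(y))\,\beta(g)(\pi(y))$ for $y\in\partial_t M$, i.e. $\alpha(f)$ is $\psi$-separable on $U$ with boundary datum $g':=\beta(g)\in C(\partial M,\cQ)$. Continuity of $g'$ is built into the action $\beta$, and continuity of its constant-along-normals extension follows from continuity of $\pi$.

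The one point that needs care—and where the genuine content sits—is the equivariance of the collar maps $\psi_{t\la 0}$: the whole argument rests on the fact that the normal-geodesic structure of a tame collar is preserved by $\alpha$, which is precisely \Cref{cor.pres-trnsl}; granting that, the rest is formal bookkeeping of pullbacks and of the tameness range $0\le t<r$. An essentially equivalent alternative would be to factor $f=\cat{sc}_{\psi}(\widehat g)$ for the normal extension $\widehat g$ of $g$ and invoke the $\cat{sc}_{\psi}$-equivariance \Cref{eq:alphaisequiv} of $\alpha$, reducing to the case $\psi\equiv 1$ (that $\alpha$ sends functions constant along normal geodesics to functions of the same type); but this reduction again appeals to the equivariance of $\psi_{t\la 0}$. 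I expect the slice-wise formulation above to be cleanest, since it produces the new boundary datum $\beta(g)$ directly and handles an arbitrary continuous $\psi$ in a single stroke.
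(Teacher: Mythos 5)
Your proof is correct and follows essentially the same route as the paper's: both rest on the $\alpha$-invariance of the slices $\partial_t M$ and the equivariance of the normal-flow maps $\psi_{t\la 0}$ (the paper's commutative diagram is exactly your intertwining relation). The only difference is cosmetic — the paper first reduces to $\psi\equiv 1$ via the $\cat{sc}_\psi$-equivariance \Cref{eq:alphaisequiv} and then runs the slice-wise argument, whereas you carry the scalar $\psi(t)$ through directly, and you yourself note the paper's reduction as an equivalent alternative.
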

\begin{proof}
  We work on the $\alpha$-invariant closed collar $X:=\partial_{r\la 0}M$, to simplify the discussion. In the notation introduced prior to \Cref{le:bdrystrong}, the function \Cref{eq:sepmod} is obtained by applying $\cat{SC}_{\psi}$ to a function on $X$ independent of $x_n$ (i.e. a function depending only on the first $n-1$ variables). By the $\alpha$-equivariance \Cref{eq:alphaisequiv} of $\cat{SC}_{\psi}$, it is enough to prove the claim for $x_n$-independent functions, i.e. for $\psi\equiv 1$.

  To that end consider such a function $f\in C(X)$, independent of $x_n$. This means that for $s\in [0,r]$ in the commutative diagram
\begin{equation*}
\begin{tikzpicture}[auto,baseline=(current  bounding  box.center)]
\path[anchor=base] 
(0,0) node (l) {$C(\partial_s M)$}
+(3,.5) node (u) {$C(\partial M)$}
+(3,-.5) node (d) {$C(\partial_s M,\cQ)$}
+(6,0) node (r) {$C(\partial M,\cQ)$}
;
\draw[->] (l) to[bend left=6] node[pos=.5,auto] {$\scriptstyle \psi_{s\la 0}^*$} (u);
\draw[->] (u) to[bend left=6] node[pos=.5,auto] {$\scriptstyle \alpha$} (r);
\draw[->] (l) to[bend right=6] node[pos=.5,auto,swap] {$\scriptstyle \alpha$} (d);
\draw[->] (d) to[bend right=6] node[pos=.5,auto,swap] {$\scriptstyle \psi_{s\la 0}^*\otimes\id_{\cQ}$} (r);
\end{tikzpicture}
\end{equation*}
(where the actions induced by $\alpha$ are denoted by the same symbol) the top left arrow maps
\begin{equation*}
  f|_{\partial_s M}\longmapsto f|_{\partial M}. 
\end{equation*}
But then by the very definition of the induced actions $\alpha$ (the two south-easterly arrows) the bottom right arrow similarly maps 
\begin{equation*}
  \alpha(f)|_{\partial_s M}\longmapsto \alpha(f)|_{\partial M};
\end{equation*}
in turn, this says precisely that, having identified $\partial M$ and $\partial_s M$ via the first $n-1$ coordinates $x_i$, the restrictions of $\alpha(f)$ to the two sets are equal. Since $s\in [0,r]$ was arbitrary, this is what was needed.
\end{proof}

\pf{th.doubled}
\begin{th.doubled}
  The second part (injectivity) follows from the last statement in \Cref{le.glue}, so it remains to prove weak smoothness.

As above, fix a collar neighborhood $U=\partial_{<r}M$ of $\partial M$ with the adapted coordinate system $(x_1,\cdots,x_n)$ that we used in the construction of the smooth structure on $D(M)$ ($x_n$ denoting distance from $\partial M$).

We extend the notion of $\psi$-separability to functions on
\begin{equation*}
  V:=U\cup_{\partial M}U\cong (-r,r)\times \partial M
\end{equation*}
and $\psi:(-r,r)\to \bR$. It follows from the definition of $\alpha^2$ that for every smooth $\psi:(-r,r)\to \bR$ smooth functions on $D(M)$ that are $\psi$-separable on $V$ are sent by $\alpha^2$ to smooth functions $D(M)\to \cQ$ that are $\psi$-separable on $V$.

The weak smoothness of $\alpha^2$ now follows from the Fr\'echet density of the algebra generated by
\begin{equation*}
  \{f\in C^{\infty}(D(M))\ |\ f\text{ is }\psi-\text{separable on }V\}
\end{equation*}
in $C^{\infty}(D(M))$.
\end{th.doubled}

As a consequence, we have the following generalization of \Cref{th.nbdry}.

\begin{theorem}\label{th.rig-bdry}
  Let $M$ be a compact connected Riemannian manifold, possibly with boundary. Then, every faithful compact quantum group action on $M$ isometric with respect to the geodesic distance $d$ is classical.
\end{theorem}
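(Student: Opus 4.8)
The plan is to reduce the bounded case to the closed-manifold rigidity theorem \Cref{main} by passing to the double of $M$.

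First I would dispose of the case $\partial M=\emptyset$: then $M$ is closed and connected, the isometric action $\alpha$ is weakly smooth by \Cref{pr.bdry-wsmth}, it is faithful by hypothesis, and its reduced version is injective by \cite[Proposition 3.10]{Chi15}; hence \Cref{main} (equivalently \Cref{th.nbdry}) directly gives that $\cQ$ is classical. So I would assume from here on that $\partial M\ne\emptyset$.

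Next I would form the double $D(M)=M\cup_{\partial M}M$. Since $M$ is connected and $\partial M\ne\emptyset$, the glued space $D(M)$ is connected, and it carries a closed smooth structure once we fix the boundary-adapted collar as in the discussion preceding \Cref{th.doubled}. The crucial enabling fact is that $\partial M$ is \emph{strongly} $\alpha$-invariant (\Cref{le:bdrystrong}), which is precisely what lets the gluing construction of \Cref{le.glue} apply to the two copies of $(\alpha,M)$ and produce the doubled action $\alpha^2$ of the same quantum group $\cQ$ on $C(D(M))$.

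Finally I would check that $\alpha^2$ satisfies every hypothesis of \Cref{main}. Weak smoothness of $\alpha^2$ and injectivity of $\alpha^2_r$ are exactly the conclusions of \Cref{th.doubled}; faithfulness follows from the last assertion of \Cref{le.glue}, since $\alpha$ itself is faithful. Thus $\alpha^2$ is a faithful, weakly smooth action of $\cQ$ on the closed connected manifold $D(M)$ with injective reduced version, and \Cref{main} yields $\cQ\cong C(G)$ for a compact group $G$. As the quantum group underlying $\alpha^2$ is the very same $\cQ$ acting on $M$, this proves the theorem.

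The hard part is not this final assembly but the preparatory results it rests on: establishing that the boundary is preserved at all (\Cref{pr.bdry-inv}) and even strongly so (\Cref{le:bdrystrong}), which is what legitimizes the gluing, and then proving weak smoothness of $\alpha^2$ across the seam $\partial M\subset D(M)$ together with injectivity of $\alpha^2_r$ (\Cref{th.doubled}) — the former leaning on the $\psi$-separability analysis of \Cref{le:sep} and on the compatibility of the chosen smooth structure on the double with the adapted collar coordinates. Granting those inputs, the present proof is a short verification that the doubled action lands squarely in the scope of the closed-manifold theorem.
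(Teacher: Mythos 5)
Your proposal is correct and follows essentially the same route as the paper: double the manifold along its (strongly invariant) boundary, invoke \Cref{th.doubled} for weak smoothness and injectivity of the reduced doubled action together with \Cref{le.glue} for faithfulness, and conclude by the closed-manifold rigidity theorem. If anything, your version is slightly more careful than the paper's, since you treat the case $\partial M=\emptyset$ separately (where the double is disconnected) and you cite \Cref{main} directly rather than routing through \Cref{th.nbdry}.
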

\begin{proof}
  Let $\alpha$ be an action by the compact quantum group $\cQ$ as in the statement and $\alpha^2$ its doubled version. Since $M$ is connected, $D(M)$ is a connected closed manifold. By \Cref{th.doubled} $\alpha^2$ meets the requirements of \Cref{th.prsrv} and hence $\alpha^2$ preserves some Riemannian metric on $D(M)$. But then $\cQ$ must be classical by \Cref{th.nbdry}, finishing the proof.
\end{proof}

\section{Uniformly distributed measures}\label{subse.unif}

Another situation when quantum isometry groups exist automatically (though they may not be classical, in general) occurs when the metric space is equipped with a probability measure as in the title of the present subsection. We first recall that concept (see e.g. \cite[Definition 3.3]{mat}). 

\begin{definition}\label{def.ud}
  A measure on a metric space $X$ is {\it uniformly distributed} (or {\it UD} for short) if
  \begin{equation*}
    \mu(B(x,r)) = \mu(B(y,r)),\ \forall x,y\in X,\ \forall r\in \bR_{\ge 0}. 
  \end{equation*}
  In other words, the measure assigns equal mass to balls of equal radius, regardless of center. 
\end{definition}

Uniformly distributed measures on compact metric spaces are unique up to scaling when they exist \cite[Theorem 3.4]{mat}, and hence UD probability measures are unique (or non-existent).

Now let $\mu$ be a UD probability measure on $(X,d)$ and consider a CQG action
\begin{equation*}
  C(X)\to C(X)\otimes C(G)
\end{equation*}
on $X$ that is isometric in the sense of \cite[Definition 3.1]{metric}. The following auxiliary observation will be used later. 


\begin{lemma}\label{le.munu}
  Let $\mu$ be a UD probability measure on the compact metric space $(X,d)$ and $\nu$ any probability measure. Then, for every $r\in \bR_{\ge 0}$ we have  
  \begin{equation*}
    \int_X \nu(B(x,r))\ \mathrm{d}\mu(x)=\mu_r:=\mu(B(x,r)),\ \forall x\in X.
  \end{equation*}
\end{lemma}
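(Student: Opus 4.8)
The plan is to reduce the statement to a symmetry property of the distance function combined with the defining property of $\mu$. The starting point is to write the inner quantity as an integral of an indicator: $\nu(B(x,r)) = \int_X \mathbf{1}_{B(x,r)}(y)\,\mathrm{d}\nu(y)$, where $B(x,r)$ denotes the ball of radius $r$ (whether open or closed, the argument is insensitive to the choice). This turns the left-hand side into the double integral
\begin{equation*}
  \int_X \nu(B(x,r))\,\mathrm{d}\mu(x) = \int_X\int_X \mathbf{1}_{B(x,r)}(y)\,\mathrm{d}\nu(y)\,\mathrm{d}\mu(x).
\end{equation*}

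First I would invoke Tonelli's theorem to interchange the order of integration. This requires the joint measurability on $X\times X$ of the integrand $(x,y)\mapsto\mathbf{1}_{B(x,r)}(y)$, which is the indicator of $\{(x,y)\mid d(x,y)<r\}$ (respectively $\le r$); this set is open (respectively closed) by continuity of the metric $d$, hence Borel, and both measures are finite, so Tonelli applies and lets me write the right-hand side as $\int_X\int_X \mathbf{1}_{B(x,r)}(y)\,\mathrm{d}\mu(x)\,\mathrm{d}\nu(y)$.

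The key step is then the observation that ball-membership is symmetric: because $d(x,y)=d(y,x)$ we have $\mathbf{1}_{B(x,r)}(y)=\mathbf{1}_{B(y,r)}(x)$. After the swap, the inner integral over $x$ becomes $\int_X\mathbf{1}_{B(y,r)}(x)\,\mathrm{d}\mu(x)=\mu(B(y,r))$, which by the uniform-distribution hypothesis equals the constant $\mu_r$ for every $y$. Finally, using that $\nu$ is a probability measure,
\begin{equation*}
  \int_X \nu(B(x,r))\,\mathrm{d}\mu(x) = \int_X \mu_r\,\mathrm{d}\nu(y)=\mu_r\,\nu(X)=\mu_r,
\end{equation*}
which is the desired identity.

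I do not anticipate any genuine difficulty here: the whole content is the symmetry of the distance together with the UD property of $\mu$, and the only technical point is the measurability of the indicator on the product space needed for Tonelli, which follows immediately from continuity of $d$.
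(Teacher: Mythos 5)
Your argument is correct and is essentially identical to the paper's proof: both rewrite the left-hand side as a double integral of the indicator $\chi_{B(x,r)}(y)$, use the symmetry $\chi_{B(x,r)}(y)=\chi_{B(y,r)}(x)$ together with Fubini/Tonelli, and then apply the UD property and $\nu(X)=1$. Your added remarks on joint measurability of the indicator are a harmless elaboration of the same route.
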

\begin{proof}
  By Fubini's theorem, the left hand side is
  \begin{align*}
    \int_{X\times X}\chi_{B(x,r)}(y)\ \mathrm{d}\nu(y)\ \mathrm{d}\mu(x) &=  \int_{X\times X}\chi_{B(y,r)}(x)\ \mathrm{d}\nu(y)\ \mathrm{d}\mu(x)\\
    &=\int_X \mu_r\ \mathrm{d}\nu(y) = \mu_r. 
  \end{align*}
  This finishes the proof.
\end{proof}

\begin{theorem}\label{th.ud}
  A uniformly distributed measure $\mu$ on a compact metric space $(X,d)$ is automatically invariant under any isometric CQG action $\alpha$.
\end{theorem}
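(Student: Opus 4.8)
The plan is to show directly that $\mu\triangleleft\psi=\mu$ for every state $\psi$ on $\cQ$, which is precisely the assertion that $\mu$ is $\alpha$-invariant. Writing $\nu:=\mu\triangleleft\psi$ and $\nu_w:=\delta_w\triangleleft\psi$ for the image measures, so that $\nu=\int_X\nu_w\,\mathrm{d}\mu(w)$, the goal becomes $\nu=\mu$. Since uniformly distributed probability measures are unique \cite[Theorem 3.4]{mat}, it suffices to prove that $\nu$ is itself uniformly distributed, i.e.\ that $\nu(B(z,r))$ does not depend on $z$.

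The heart of the argument is a single computation. Fix a continuous $\phi:\bR_{\ge 0}\to\bR$ and, for $z\in X$, set $g_z:=\phi\circ d_z\in C(X)$. Unwinding definitions and using that a bounded functional commutes with the ($\cQ$-valued, Bochner) integral against $\mu$,
\begin{equation*}
  \int_X\phi(d(z,p))\,\mathrm{d}\nu(p)=\int_X[w\triangleleft\psi](g_z)\,\mathrm{d}\mu(w)=\psi\big((\mu\otimes\id)\alpha(g_z)\big).
\end{equation*}
On the other hand, because $\alpha$ is isometric it preserves every function of the distance in the twisted sense underlying \Cref{def.isometric}, namely $\alpha(g_z)(w)=\kappa\big(\alpha(g_w)(z)\big)$. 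Integrating this identity against $\mu$ in the variable $w$, pulling the bounded antipode $\kappa$ out of the integral, and then applying $\psi$ (with $\overline{\psi}:=\psi\circ\kappa$, again a state) yields, after unwinding and a Fubini interchange,
\begin{equation*}
  \int_X\phi(d(z,p))\,\mathrm{d}\nu(p)=\int_X\Big(\int_X\phi(d(w,q))\,\mathrm{d}\mu(w)\Big)\,\mathrm{d}\overline{\nu}_z(q),\qquad \overline{\nu}_z:=z\triangleleft\overline{\psi}.
\end{equation*}
By the uniform-distribution hypothesis the inner integral $\int_X\phi(d(w,q))\,\mathrm{d}\mu(w)$ is independent of $q$ — this is exactly the content of \Cref{le.munu} (the constancy of $q\mapsto\mu(B(q,r))$, with $\phi$ playing the role of the ball indicator). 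Calling this constant $C_\phi$ and using $\overline{\nu}_z(X)=1$, the whole expression collapses to $C_\phi=\int_X\phi(d(z,p))\,\mathrm{d}\mu(p)$.

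Thus $\int_X\phi(d(z,p))\,\mathrm{d}\nu(p)=\int_X\phi(d(z,p))\,\mathrm{d}\mu(p)$ for every continuous $\phi$ and every $z$, so the pushforwards of $\nu$ and $\mu$ under $d_z=d(z,-)$ agree as finite Borel measures on $\bR_{\ge 0}$. In particular $\nu(B(z,r))=\mu(B(z,r))=\mu_r$ for all $z$ and $r$, so $\nu$ is uniformly distributed and hence equals $\mu$ by uniqueness. As $\psi$ was arbitrary, this gives $\mu\triangleleft\psi=\mu$ for all $\psi$, i.e.\ the invariance of $\mu$.

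I expect the main obstacle to be the careful justification of the twisted symmetry $\alpha(g_z)(w)=\kappa(\alpha(g_w)(z))$ for $g=\phi\circ d$ rather than only for $d_x$ itself (the defining case in \Cref{def.isometric}); this is where one must invoke the extension of the isometric condition to arbitrary functions of the distance, with the antipode in the correct position. The remaining points — existence of the Bochner integrals, interchanging the bounded maps $\psi$ and $\kappa$ with integration, the Fubini step, and passing from equality of the integrals of $\phi\circ d_z$ to equality of ball masses (equivalently, equality of the pushforward measures on $\bR_{\ge 0}$) — are routine given compactness of $X$ and the boundedness of $\kappa$ guaranteed by the standing Kac-type assumption recalled in \Cref{subse.metric}.
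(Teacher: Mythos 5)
Your proof is correct and follows essentially the same route as the paper's: both exploit the antipode-twisted symmetry of radial functions in the two arguments of $d$, integrate against $\mu$, and invoke the constancy statement of \Cref{le.munu} to collapse the result to the constant $\mu_r$ (respectively $C_\phi$). The only real difference is that you test against continuous radial functions $\phi\circ d_z$ and recover the ball masses from the pushforwards $(d_z)_*\nu=(d_z)_*\mu$, whereas the paper lifts $\alpha$ to the von Neumann hull $W(X)$ so as to apply the twisted symmetry directly to ball indicators via \cite[equation (13)]{Chi15}; your variant avoids that lift at no cost, and your concluding appeal to uniqueness of uniformly distributed measures is the right way to pass from equality on balls to equality of measures.
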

\begin{proof}
  We have to show that for every state $\varphi$ on $C(G)$ and UD probability measure $\mu$ $(X,d)$ we have
  \begin{equation*}
    \mu\triangleleft\varphi = \mu. 
  \end{equation*}
  Lift $\alpha$ to a coaction (denoted slightly abusively by the same symbol)
  \begin{equation*}
    \alpha:W(X)\to W(X)\otimes C(G)''
  \end{equation*}
  where 
  \begin{itemize}
  \item $C(G)''$ is the von Neumann algebra generated by $C(G)$ in its Haar-state GNS representation;
  \item $W(X)$ is the von Neumann hull of $C(X)$. 
  \end{itemize}
  As in \cite[\S 3]{Chi15}, for a point $x\in X$ and a Borel subset $S\subseteq X$ we denote by $a_{x;S}$ the image of the characteristic function $\chi_S$ through $(\mathrm{ev}_x\otimes \id)\alpha$. According to \cite[equation (13)]{Chi15} we have
  \begin{equation}\label{eq:1}
    a_{x;B(y,r)} = \kappa(a_{y;B(x,r)})
  \end{equation}
  for all pairs of points $x,y\in X$ and radii $r\in \bR_{\ge 0}$. By the very definition of the action $\triangleleft$ of the state semigroup $\mathrm{Prob}(G)$ on $\mathrm{Prob}(X)$, we have
  \begin{equation*}
    (\mu\triangleleft\varphi)(B(y,r)) = \int_X \varphi(a_{x;B(y,r)})\ \mathrm{d}\mu(x),
  \end{equation*}
  i.e. the integral of the left hand side of \Cref{eq:1} against $\mu(x)$. Using \Cref{eq:1}, this is also
  \begin{equation*}
    \int_X(\mathrm{ev}_y\triangleleft\varphi\circ\kappa)(B(x,r))\ \mathrm{d}\mu(x). 
  \end{equation*}
  Applying \Cref{le.munu} with $\nu=\mathrm{ev}_y\triangleleft \varphi\circ\kappa$ we conclude that this equals $\mu_r$. In conclusion,
  \begin{equation*}
    (\mu\triangleleft\varphi)(B(y,r)) = \mu_r = \mu(B(y,r)), \forall y\in X. 
  \end{equation*}
  This finishes the proof.
\end{proof}

The reason why this has a bearing on the existence of $QISO(X,d)$ is encapsulated by the following result.

\begin{theorem}\label{th.xdmu}
  Let $(X,d)$ be a compact metric space and $\mu$ a Borel probability measure with full support. Then, there is a universal compact quantum group $QISO(X,d,\mu)$ acting on $(X,d)$ isometrically and preserving $\mu$. 
\end{theorem}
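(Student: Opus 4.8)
The plan is to construct $QISO(X,d,\mu)$ by a generators-and-relations argument in the spirit of Wang's quantum automorphism groups \cite{wang}, exploiting the fact that fixing the faithful measure $\mu$ pins every admissible action down to a single Hilbert space. First I would observe that, since $\mu$ has full support, it is a faithful state on $\cC=C(X)$; let $(\cH,\pi,\xi_0)$ be its GNS triple, so that $\cH=L^2(X,\mu)$ and $\pi:\cC\hookrightarrow\cB(\cH)$ is faithful. As recalled in \Cref{subse:cqg} (see also \Cref{def.impl}), any isometric action $\alpha$ of a CQG $\cQ$ preserving $\mu$ is implemented by a unitary corepresentation $U_\alpha$ of $\cQ$ on $\cH$, with $\alpha(\cdot)=U_\alpha(\pi(\cdot)\otimes 1)U_\alpha^{*}$; here $\mu$-invariance is precisely the statement that $U_\alpha$ fixes the cyclic vector, $U_\alpha(\xi_0\otimes 1)=\xi_0\otimes 1$. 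Conversely, any unitary corepresentation $U$ on $\cH$ fixing $\xi_0$ with $U(\pi(\cC)\otimes 1)U^{*}\subseteq \pi(\cC)\otimes\cQ$ produces a $\mu$-preserving action. This reduces the problem to producing a universal such $U$.

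Next I would encode all constraints as $*$-algebraic relations among matrix coefficients. Writing $U=\sum_{i,j}e_{ij}\otimes u_{ij}$ in an orthonormal basis $(e_i)$ of $\cH$ (separable when $X$ is metrizable), the data of an admissible action becomes the family $(u_{ij})$ subject to: unitarity of $U$; the corepresentation identity $\Delta(u_{ij})=\sum_k u_{ik}\otimes u_{kj}$; the relation expressing $U\xi_0=\xi_0$; the relation expressing that conjugation by $U$ carries $\pi(\cC)$ into $\pi(\cC)\otimes\cQ$ (equivalently, that the action respects the multiplication of $\cC$); and the isometry relation $\alpha(d_x)(y)=\kappa(\alpha(d_y)(x))$ of \Cref{def.isometric}, for all $x,y\in X$. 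Each quantity $\alpha(d_x)(y)=(\mathrm{ev}_y\otimes\mathrm{id})\alpha(d_x)$ is a concrete $*$-polynomial in the $u_{ij}$, and since $\cC$ is commutative every $\mu$-preserving action is automatically of Kac type by \cite[Theorem 3.16]{hua-inv}, so $\kappa$ is bounded and acts by $\kappa(u_{ij})=u_{ji}^{*}$. The isometry condition is therefore a bona fide relation among the $u_{ij}$ and their adjoints, involving no unbounded operators.

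I would then let $A$ be the universal unital $C^{*}$-algebra on generators $u_{ij}$ subject to exactly these relations. Because each $u_{ij}$ is an entry of a unitary it has norm $\le 1$, so the largest $C^{*}$-seminorm on the free $*$-algebra modulo the relations is finite and $A$ is a genuine $C^{*}$-algebra. The formula $\Delta(u_{ij})=\sum_k u_{ik}\otimes u_{kj}$ extends to a coassociative $*$-homomorphism, and the density (Hopf) conditions follow from unitarity exactly as in the passage from a unitary corepresentation to its associated matrix CQG; thus $(A,\Delta)=:QISO(X,d,\mu)$ is a compact quantum group carrying a tautological isometric, $\mu$-preserving action $\alpha_u$. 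Universality is then immediate: given any CQG $\cQ$ with an isometric $\mu$-preserving action $\alpha$, the implementing unitary $U_\alpha$ satisfies the defining relations, so $u_{ij}\mapsto(u_\alpha)_{ij}$ extends to the unique CQG morphism $A\to\cQ$ intertwining $\alpha_u$ with $\alpha$.

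The hard part will be the bookkeeping behind the second and third steps when $\cH$ is infinite-dimensional: one must choose the generating family of matrix coefficients so that the relations are countable and the universal $C^{*}$-algebra is unambiguously defined, and verify that the density/Hopf conditions genuinely survive the passage to the universal object. The cleanest route is to decompose $U$ into its finite-dimensional corepresentation blocks and argue block by block, so that each block reduces to a finite, Wang-type relation set. The one genuinely delicate point is that the isometry relation, which a priori involves the antipode $\kappa$, is compatible with the universal construction; this is exactly what the Kac property established above guarantees, via the identity $\kappa(u_{ij})=u_{ji}^{*}$.
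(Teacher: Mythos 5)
Your reduction to a universal generators-and-relations construction founders on the step you defer to ``bookkeeping,'' and that step is in fact the entire content of the theorem. The condition that conjugation by $U$ carries $\pi(\cC)\otimes 1$ into $\pi(\cC)\otimes\cQ$ is a \emph{containment} in a closed subspace of $\cB(L^2(X,\mu))\otimes\cQ$, not a $*$-polynomial identity in the matrix entries $u_{ij}$, so it cannot be imposed as a relation in a universal $C^*$-algebra without further structure. (This is exactly the obstruction behind Wang's observation, recalled in the introduction, that universal quantum automorphism groups can fail to exist even for finite-dimensional $C^*$-algebras; fixing the state $\mu$ removes one difficulty but not this one.) Worse, the isometry relation you want to impose, $\alpha(d_x)(y)=\kappa(\alpha(d_y)(x))$, involves evaluation at points $y\in X$, and $\mathrm{ev}_y$ does not extend from $C(X)$ to $\cB(L^2(X,\mu))$; so this relation cannot even be \emph{formulated} among the $u_{ij}$ until the containment problem is already solved. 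Your proposed fix --- ``decompose $U$ into its finite-dimensional corepresentation blocks and argue block by block'' --- is circular: the block decomposition of $L^2(X,\mu)$ into finite-dimensional invariant subspaces depends on the acting quantum group $\cQ$, and different admissible actions produce different decompositions. To make the strategy work you need a single, action-independent decomposition of a dense subspace of $C(X)$ into finite-dimensional subspaces preserved by \emph{every} isometric $\mu$-preserving action, and nothing in your argument produces one.

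That canonical decomposition is precisely what the paper's proof supplies. It introduces the compact self-adjoint integral operator $K$ with kernel $d$ on $L^2(X,\mu)$ and proves (\Cref{le:invmeas}, \Cref{le:comm}) that $K$ intertwines every isometric $\mu$-preserving action; hence the finite-dimensional eigenspaces $V_\lambda$, $\lambda\neq 0$, are invariant under all such actions simultaneously, and they generate a dense $*$-subalgebra of $C(X)$ because $K$ applied to bump functions approximates the $d_y$. Closing the family $\{V_\lambda\}$ under products, adjoints and orthogonal complements yields an orthogonal filtration in the sense of Banica--Skalski, whose universal quantum symmetry group exists by \cite[Theorem 2.7]{ban-sk}; the universal \emph{isometric} quantum subgroup is then extracted via \cite[Theorem 4.4]{Chi15}. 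Your instinct that the construction should ultimately be a Wang-type universal one is not wrong --- the Banica--Skalski theorem is itself such a construction relative to a filtration --- but without an analogue of the operator $K$ (or some other device producing universal finite-dimensional invariant subspaces) your argument does not get off the ground.
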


We need some preparation. Throughout the discussion, we assume $\alpha$ is an isometric, $\mu$-preserving action on $(X,d,\mu)$. First, consider the integral operator with kernel $d$, i.e.
\begin{equation}\label{eq:intop}
  K: f\mapsto \int_Xd(-,x)f(x)\ \mathrm{d}\mu(x). 
\end{equation}
It can be regarded as an operator on either the Banach space $C(X)$ or the Hilbert space $L^2(X,\mu)$, and it is compact in either guise ( self-adjoint in the latter). Working in the $L^2$-picture, its non-zero eigenspaces
\begin{equation*}
  V_{\lambda}:=\ker(K-\lambda),\ \lambda\ne 0
\end{equation*}
coincide on $C(X)$ and $L^2(X,\mu)$ because its image consists of continuous functions. 

\begin{lemma}\label{le:invmeas}
  Let $(X,d,\mu)$ be a compact metric space equipped with a Borel probability measure and $\alpha:\cC\to \cC\otimes \cQ$ an isometric action as in \Cref{def.isometric} leaving $\mu$ invariant. Then, for any two continuous functions $f,g\in \cC$ we have
  \begin{equation}\label{eq:bal}
    \int_X \kappa(\alpha g(x)) f(x)\ \mathrm{d}\mu(x) = \int_X g(x) \alpha f(x)\ \mathrm{d}\mu(x) \in \cQ.
  \end{equation}
\end{lemma}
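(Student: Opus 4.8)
The plan is to recast \Cref{eq:bal} as the single $\cQ$-valued identity
\[
  \kappa\big((\mu_f\otimes\id)\alpha(g)\big)=(\mu_g\otimes\id)\alpha(f),
\]
where $\mu_h$ denotes the functional $p\mapsto\int_X h\,p\,\mathrm{d}\mu$ on $\cC$. Indeed, since $\kappa$ is bounded (Kac type, as recorded in \Cref{subse.metric}) and $f(x)$ is a scalar, the left-hand side of \Cref{eq:bal} is $\kappa$ applied to $\int_X f(x)\,\alpha g(x)\,\mathrm{d}\mu(x)=(\mu_f\otimes\id)\alpha(g)$, while the right-hand side is exactly $(\mu_g\otimes\id)\alpha(f)$. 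Both sides of the recast identity are $\bC$-bilinear in $(f,g)$, so since $\cC=C(X,\bR)\oplus iC(X,\bR)$ it suffices to treat real-valued $f,g$.

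For real $f,g$ I would first record two elementary facts. As $\alpha$ is a $*$-homomorphism and $f$ is real, $\alpha(f)$ is self-adjoint; and as $g$ is real, $\mu_g$ is a \emph{real} functional, i.e.\ $\mu_g(p^*)=\overline{\mu_g(p)}$. Together these force $(\mu_g\otimes\id)\alpha(f)$ to be self-adjoint, so it is enough to prove the apparently weaker statement
\[
  \kappa\big((\mu_f\otimes\id)\alpha(g)\big)=\big((\mu_g\otimes\id)\alpha(f)\big)^*.
\]

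Next I would pass to the implemented picture. As in the proof of \Cref{th.ud}, lift $\alpha$ to the von Neumann hull $W(X)=L^\infty(X,\mu)$, on which $\mu$ is faithful; invariance of $\mu$ then makes $f\mapsto\alpha(f)$ isometric for the $L^2(X,\mu)$-inner product and yields a unitary representation $U$ of $\cQ$ on $\cH=L^2(X,\mu)$ implementing $\alpha$ (\Cref{subse:cqg}), with the cyclic vector $\Omega=1$ satisfying $U(\Omega\otimes1)=\Omega\otimes1$. Writing $M_g$ for the multiplication operator by $g$ and using $\alpha(g)=U(M_g\otimes1)U^*$ together with $U^*(\Omega\otimes1)=\Omega\otimes1$, one identifies the slice as a matrix coefficient of $U$: with $\omega_{\xi,\eta}(T):=\langle\xi,T\eta\rangle$ one gets $(\mu_f\otimes\id)\alpha(g)=(\omega_{f,g}\otimes\id)(U)$ (here $f$ real, so $\bar f=f$), and likewise $(\mu_g\otimes\id)\alpha(f)=(\omega_{g,f}\otimes\id)(U)$.

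Finally I would invoke the antipode. Because the action is isometric, $\cQ$ is of Kac type and $\kappa$ is a bounded involution (\Cref{subse.metric}), so the standard relation $(\id\otimes\kappa)(U)=U^{-1}=U^*$ for a unitary representation gives $\kappa\big((\omega_{f,g}\otimes\id)U\big)=(\omega_{f,g}\otimes\id)(U^*)$. Coupling this with the elementary identity $(\omega_{\xi,\eta}\otimes\id)(U^*)=\big((\omega_{\eta,\xi}\otimes\id)(U)\big)^*$ (which follows from $\omega_{\xi,\eta}^*=\omega_{\eta,\xi}$) produces $\kappa\big((\mu_f\otimes\id)\alpha(g)\big)=\big((\omega_{g,f}\otimes\id)U\big)^*=\big((\mu_g\otimes\id)\alpha(f)\big)^*$, which by the self-adjointness noted above equals $(\mu_g\otimes\id)\alpha(f)$, as required. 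The step I expect to be the main obstacle is the bookkeeping of the implementation: extracting the single unitary $U$ on $L^2(X,\mu)$ from mere invariance of $\mu$ (handled by working in $W(X)$, where $\mu$ is faithful) and justifying $(\id\otimes\kappa)(U)=U^*$ at the level of its coefficients in the possibly infinite-dimensional, non-reduced setting — precisely the place where boundedness of $\kappa$ coming from the Kac property is indispensable.
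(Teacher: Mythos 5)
Your proof is correct, but it takes a genuinely different route from the paper's. The paper proves \Cref{eq:bal} by a three-line Sweedler computation on the dense Hopf-algebraic core $\cC_0\subseteq\cC$, $\cQ_0\subseteq\cQ$: writing the claim as $\mu(fg_0)\kappa(g_1)=\mu(f_0g)f_1$, it applies the invariance identity $\mu(x_0)x_1=\mu(x)1$ to $x=fg_0$ and then the antipode axiom $g_1\kappa(g_2)=\varepsilon(g)1$, extending to all of $\cC$ by norm-density and boundedness of $\kappa$. You instead pass to the GNS/implemented picture, identify both sides of the recast identity as matrix coefficients $(\omega_{f,g}\otimes\id)(U)$ of the implementing unitary, and invoke $(\id\otimes\kappa)(U)=U^*$ together with the self-adjointness of $(\mu_g\otimes\id)\alpha(f)$ for real $f,g$. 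Your approach buys a conceptually transparent explanation (the identity is exactly unitarity of $U$ read through the antipode) at the cost of heavier machinery: you need the Peter--Weyl decomposition of the possibly infinite-dimensional representation $U$ plus boundedness of $\kappa$ to justify $\kappa\bigl((\omega_{\xi,\eta}\otimes\id)U\bigr)=(\omega_{\xi,\eta}\otimes\id)(U^*)$ coefficient-wise, whereas the paper's argument is purely algebraic. Two small remarks, neither a gap: the paper's $W(X)$ is the enveloping von Neumann algebra of $C(X)$, not $L^\infty(X,\mu)$, but you do not actually need any von Neumann lift --- the invariance of $\mu$ alone makes $a\Omega\mapsto\alpha(a)(\Omega\otimes1)$ well defined and isometric on $L^2(X,\mu)$ even when $\mu$ is not faithful on $C(X)$; and you only ever use the weaker identity $U(g\Omega\otimes1)=\alpha(g)(\Omega\otimes1)$ rather than the full implementation $\alpha(g)=U(M_g\otimes1)U^*$, which is just as well since the latter is delicate when $\pi$ is not faithful.
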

\begin{proof}
  It is enough to work with the dense $*$-subalgebras $\cC_0\subseteq \cC$ and $\cQ_0\subseteq \cQ$ discussed in \Cref{subse:cqg}, so that we can use Sweedler notation for the comultiplication and coaction:
  \begin{align*}
    \cC_0\ni f &\stackrel{\alpha}{\longmapsto} f_0\otimes f_1 \in \cC_0\otimes_{\rm alg} \cQ_0\\
    \cQ_0\ni x& \stackrel{\Delta}{\longmapsto} x_1\otimes x_2\in \cQ_0\otimes_{\rm alg} \cQ_0,
  \end{align*}
  etc. The desired equality \Cref{eq:bal} then reads
  \begin{equation*}
    \mu(fg_0)\kappa(g_1) = \mu(f_0g) f_1. 
  \end{equation*}
  To see why this is so, use the $\alpha$-invariance of $\mu$,
  \begin{equation*}
    \mu(x_0)x_1 = \mu(x)1,
  \end{equation*}
  on $x=fg_0$ to obtain
  \begin{equation*}
    \mu(fg_0)\kappa(g_1) = \mu(f_0 g_0)f_1g_1\kappa(g_2) = \mu(f_0g) f_1,
  \end{equation*}
  where the last equality uses the defining property of the antipode in a Hopf algebra, namely
  \begin{equation*}
    g_1\kappa(g_2) = \varepsilon(g)1. 
  \end{equation*}
  This finishes the proof.
\end{proof}

\begin{lemma}\label{le:comm}
  The integral operator $\cK$ intertwines the action $\alpha$ in the sense that
  \begin{equation}\label{eq:ccqq}
 \begin{tikzpicture}[auto,baseline=(current  bounding  box.center)]
  \path[anchor=base] 
  (0,0) node (l) {$\cC$}
  +(3,.5) node (u) {$\cC\otimes \cQ$}
  +(3,-.5) node (d) {$\cC$}
  +(6,0) node (r) {$\cC\otimes \cQ$}
  ;
  \draw[->] (l) to[bend left=6] node[pos=.5,auto] {$\scriptstyle \alpha$} (u);
  \draw[->] (l) to[bend right=6] node[pos=.5,auto,swap] {$\scriptstyle K$} (d);
  \draw[->] (d) to[bend right=6] node[pos=.5,auto,swap] {$\scriptstyle \alpha$} (r);
  \draw[->] (u) to[bend left=6] node[pos=.5,auto] {$\scriptstyle K\otimes \id$} (r);
\end{tikzpicture}
\end{equation}
commutes. 
\end{lemma}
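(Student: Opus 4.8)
The plan is to reduce the statement to the balancing identity of \Cref{le:invmeas} together with the defining relation of an isometric action. First I would record, using \Cref{eq:intop}, that $Kf=\int_X d_x\,f(x)\,\mathrm{d}\mu(x)$, where the integrand $x\mapsto d_x\in\cC$ is a continuous (hence Bochner-integrable) $\cC$-valued function on the compact space $X$ by uniform continuity of $d$ on $X\times X$. Since $\alpha$ is a continuous linear map it passes through this integral, giving
\[
  \alpha(Kf)=\int_X \alpha(d_x)\,f(x)\,\mathrm{d}\mu(x)\in \cC\otimes\cQ\cong C(X,\cQ).
\]

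To check that \Cref{eq:ccqq} commutes it then suffices to compare both composites after evaluating at an arbitrary point $z\in X$, as two continuous $\cQ$-valued functions on $X$ agreeing pointwise coincide. Applying the continuous slice map $\mathrm{ev}_z\otimes\id$, which again commutes with the $\mu$-integral, yields $\alpha(Kf)(z)=\int_X \alpha(d_x)(z)\,f(x)\,\mathrm{d}\mu(x)$. Here the isometry relation of \Cref{def.isometric}, namely $\alpha(d_x)(z)=\kappa(\alpha(d_z)(x))$, rewrites the integrand so that
\[
  \alpha(Kf)(z)=\int_X \kappa\big(\alpha(d_z)(x)\big)\,f(x)\,\mathrm{d}\mu(x),
\]
which is precisely the left-hand side of \Cref{eq:bal} with $g=d_z$.

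Invoking \Cref{le:invmeas} with that choice of $g$ converts the last expression into $\int_X d_z(x)\,\alpha(f)(x)\,\mathrm{d}\mu(x)=\int_X d(z,x)\,\alpha(f)(x)\,\mathrm{d}\mu(x)$. On the other hand, unwinding $K\otimes\id$ as the integral operator with scalar kernel $d$ acting on the $\cC$-leg gives $\big((K\otimes\id)\alpha(f)\big)(z)=\int_X d(z,x)\,\alpha(f)(x)\,\mathrm{d}\mu(x)$ as well. Thus the two composites around \Cref{eq:ccqq} agree at every $z\in X$, which is the desired commutativity.

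The only genuinely delicate points are the two interchanges of linear maps with the $\mu$-integral (first of $\alpha$, then of $\mathrm{ev}_z\otimes\id$); both are justified by continuity of the maps involved and Bochner integrability of the continuous integrand $x\mapsto d_x$. Once those interchanges are granted, the result is a direct translation of \Cref{le:invmeas} through the isometry relation, so I do not anticipate any serious obstacle beyond this bookkeeping.
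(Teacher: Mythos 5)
Your argument is correct and coincides with the paper's own proof: both evaluate the two composites at an arbitrary point, use the isometry relation $\alpha(d_x)(z)=\kappa(\alpha(d_z)(x))$ to rewrite the integrand, and then invoke \Cref{le:invmeas} with $g=d_z$ (the paper's $d_y$) together with the symmetry of $d$. The extra care you take with the interchange of $\alpha$ and $\mathrm{ev}_z\otimes\id$ with the Bochner integral is a welcome bit of bookkeeping that the paper leaves implicit.
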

\begin{proof}
  For an arbitrary $f\in \cC$ we will evaluate the images of $f$ through both the upper and lower paths in \Cref{eq:ccqq} at a fixed point $y\in X$. On the one hand, we have
  \begin{align*}
    \alpha(Kf)(y) &= \int_X \alpha(d_x)(y) f(x)\ \mathrm{d}\mu(x)\\
                  &=\int_X \kappa(\alpha(d_y)(x)) f(x)\ \mathrm{d}\mu(x),\\
  \end{align*}
  using the fact that $\alpha$ is isometric.

  On the other hand,
  \begin{align*}
    (K\otimes \id)(\alpha f)(y) &= \int_X d_x(y) \alpha f(x)\ \mathrm{d}\mu(x)\\
                                &=\int_X d_y(x) \alpha f(x)\ \mathrm{d}\mu(x).\\
  \end{align*}
  That these two are equal now follows by applying \Cref{le:invmeas} with $g=d_y$.
\end{proof}

\pf{th.xdmu}
\begin{th.xdmu}
  Let $\cQ$ be a compact quantum group acting isometrically via
  \begin{equation*}
    \alpha:C(X)\to C(X)\otimes \cQ
  \end{equation*}
  on $(X,d)$ and preserving $\mu$, and consider the integral operator \Cref{eq:intop} on $L^2(X,\mu)$. Because by \Cref{le:comm} $K$ is an intertwiner for the action $\alpha$, the latter preserves the non-zero finite-dimensional eigenspaces $V_{\lambda}$, $\lambda\ne 0$ of $K$. Moreover, since $K$ is self-adjoint, the closed span of the $V_{\lambda}$ coincides with the closure of the range of $K$.

  Applying $K$ to bump functions $\psi$ localized near points $y\in X$ we can approximate
  \begin{equation*}
    d_y:=d(y,-)\simeq K\psi
  \end{equation*}
  arbitrarily well, so the $*$-algebra $\cA\subset C(X)$ generated by $V_{\lambda}$, $\lambda\ne 0$ is dense.

  Now consider the lattice $\cL$ of subspaces of $C(X)$ generated by the $V_{\lambda}$, $\lambda\ne 0$ $\bC 1$ and closed under the following operations
  \begin{itemize}
  \item taking products: if $V_i\in \cL$ for $1\le i\le t$ then
    \begin{equation*}
      V_1\cdot\ldots\cdot V_t\in \cL.
    \end{equation*}
  \item taking adjoints:
    \begin{equation*}
      V\in \cL\Rightarrow V^*\in \cL.
    \end{equation*}
  \item taking orthogonal complements with respect to the inner product induced by $\mu$: if $V\subseteq W$ both belong to $\cL$ then so does
    \begin{equation*}
      W\ominus V:=V^{\perp}\cap W. 
    \end{equation*}
  \end{itemize}
  The minimal (non-zero) elements of $\cL$ are then finite-dimensional subspaces preserved by the action, whose direct sum is precisely the $*$-subalgebra $\cA\subset C(X)$. Furthermore, these spaces constitute an {\it orthogonal filtration} $V_i$, $i\in \cI$ for $C(X)$ with respect to the state $\mu$ on it in the sense of \cite[Definition 2.1]{ban-sk}.

  It follows from \cite[Theorem 2.7]{ban-sk} that the is a universal compact quantum group
  \begin{equation*}
    QISO(C(X),\mu,(V_i)_{i\in \cI})
  \end{equation*}
  acting on $X$ in a filtration-preserving manner, and from \cite[Theorem 4.4]{Chi15} that the latter has a largest compact quantum subgroup $\cQ_u$ acting isometrically. The argument above shows that the action of $\cQ$ on $X$ factors through that of $\cQ_u$, i.e. that the latter has the defining universality property of $QISO(X,d,\mu)$.
\end{th.xdmu}

In particular, we have

\begin{corollary}\label{cor.ud-implies-qiso}
A compact metric space $(X,d)$ admitting a uniformly distributed probability measure admits a quantum isometry group $QISO(X,d)$.   
\end{corollary}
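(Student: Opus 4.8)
The plan is to obtain $QISO(X,d)$ by simply combining \Cref{th.ud} with \Cref{th.xdmu}, the point being that a uniformly distributed measure is the canonical measure that \emph{every} isometric action is forced to preserve. So the whole content is to check that the $\mu$-preservation hypothesis built into \Cref{th.xdmu} is automatic once $\mu$ is UD, and that \Cref{th.xdmu} is applicable in the first place.

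First I would record that a UD measure $\mu$ has full support, which is what licenses the use of \Cref{th.xdmu}. Writing $\mu_r$ for the common value $\mu(B(x,r))$ (independent of $x$ by \Cref{def.ud}), compactness lets us cover $X$ by finitely many balls $B(x_i,r)$, $1\le i\le N$, of any fixed radius $r>0$; then
\begin{equation*}
  1 = \mu(X) \le \sum_{i=1}^N \mu(B(x_i,r)) = N\mu_r,
\end{equation*}
so $\mu_r \ge 1/N > 0$. Hence every nonempty open set, containing a ball of positive radius about each of its points, has positive measure, i.e.\ $\mu$ has full support.

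Next I would invoke \Cref{th.xdmu} to produce the universal compact quantum group $QISO(X,d,\mu)$ acting isometrically on $(X,d)$ and preserving $\mu$; by construction it is universal among all $\mu$-preserving isometric CQG actions. The key step is then to discharge the $\mu$-preservation constraint: by \Cref{th.ud}, every isometric CQG action on $(X,d)$ automatically leaves $\mu$ invariant. Thus the class of isometric actions and the class of $\mu$-preserving isometric actions literally coincide, so the universality property enjoyed by $QISO(X,d,\mu)$ is exactly the defining universality property of $QISO(X,d)$. Consequently $QISO(X,d)$ exists and equals $QISO(X,d,\mu)$.

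I do not expect a substantive obstacle, as the corollary is a formal consequence of the two theorems; the only point requiring a moment's care is the full-support verification above, together with the (purely logical) observation that universality among $\mu$-preserving isometric actions upgrades to universality among all isometric actions precisely because the two classes agree.
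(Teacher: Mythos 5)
Your proof is correct and follows exactly the paper's route: the corollary is deduced immediately by combining \Cref{th.ud} with \Cref{th.xdmu}. Your explicit verification that a uniformly distributed probability measure has full support (via the finite-ball covering bound $\mu_r\ge 1/N$) is a worthwhile addition, since that hypothesis of \Cref{th.xdmu} is left implicit in the paper's one-line proof.
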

\begin{proof}
Immediate from \Cref{th.ud,th.xdmu}.
\end{proof}

For instance:

\begin{corollary}\label{cor.homog-sp}
  Let $G$ be a compact group and $X$ a homogeneous $G$-space equipped with a $G$-invariant metric $d$. Then, there is a universal quantum group $QISO(X,d)$ of isometries of $(X,d)$. 
\end{corollary}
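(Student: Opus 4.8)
The plan is to deduce this immediately from \Cref{cor.ud-implies-qiso}: it suffices to exhibit a uniformly distributed probability measure on $(X,d)$, and the obvious candidate is the unique $G$-invariant one. First I would record that $X$ is in fact compact, so that the earlier machinery applies: fixing a base point $x_0\in X$, the orbit map $G\ni g\mapsto gx_0\in X$ is continuous and, by transitivity, surjective, so $X$ is the continuous image of the compact group $G$ and hence compact. Together with the metric $d$ this makes $(X,d)$ a compact metric space of the kind to which \Cref{cor.ud-implies-qiso} pertains.

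Next I would construct $\mu$ as the pushforward of the normalized Haar measure of $G$ along the orbit map above; equivalently, it is the unique $G$-invariant Borel probability measure on the homogeneous space $X\cong G/H$ (with $H$ the stabilizer of $x_0$). By construction $g_*\mu=\mu$ for every $g\in G$.

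The key step is to check that $\mu$ is uniformly distributed in the sense of \Cref{def.ud}, and this is where the $G$-invariance of $d$ enters. Each $g\in G$ acts on $X$ as a bijective isometry, so $d(gx,gz)=d(x,z)$ and hence $g\cdot B(x,r)=B(gx,r)$ for every $x\in X$ and every $r\ge 0$. Given $x,y\in X$, transitivity furnishes $g\in G$ with $gx=y$, whence $g\cdot B(x,r)=B(y,r)$; invariance of $\mu$ then gives
\[
  \mu(B(y,r)) = \mu\bigl(g\cdot B(x,r)\bigr) = \mu(B(x,r)).
\]
As $x,y$ and $r$ are arbitrary, this is exactly the defining condition for $\mu$ to be uniformly distributed. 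Invoking \Cref{cor.ud-implies-qiso} then produces $QISO(X,d)$ and completes the argument. I do not anticipate any real obstacle here: the proof is a direct reduction to \Cref{cor.ud-implies-qiso}, and the only points that require (entirely standard) care are the compactness of $X$ and the existence and $G$-invariance of $\mu$, both of which are classical facts about continuous actions of compact groups on homogeneous spaces.
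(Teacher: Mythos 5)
Your proposal is correct and follows essentially the same route as the paper: the paper also reduces to the existence of a uniformly distributed probability measure by averaging over the Haar measure of $G$ (your pushforward of Haar along the orbit map is exactly the average of a point mass), and then invokes \Cref{th.ud} together with the existence result for $QISO(X,d,\mu)$. Your additional checks --- compactness of $X$ and the verification that $G$-invariance of $d$ plus transitivity forces the invariant measure to be uniformly distributed --- are exactly the details the paper leaves implicit.
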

\begin{proof}
  This is a consequence of \Cref{th.ud}, since $(X,d)$ admits a UD probability measure: simply select {\it any} probability measure on $X$ and average it with respect to the Haar measure of $G$.
\end{proof}


\bibliographystyle{plain}
\addcontentsline{toc}{section}{References}

\def\polhk#1{\setbox0=\hbox{#1}{\ooalign{\hidewidth
  \lower1.5ex\hbox{`}\hidewidth\crcr\unhbox0}}}

\Addresses

\end{document}